\newtheorem{theorem}{Theorem}[section]
\newtheorem{lemma}[theorem]{Lemma}
\newtheorem{proposition}[theorem]{Proposition}
\theoremstyle{definition}
\newtheorem{remark}[theorem]{Remark}
\theoremstyle{definition}
\theoremstyle{definition}
\newtheorem{assumption}[theorem]{Assumption}
\theoremstyle{definition}
\newcommand{\dv}{\operatorname{div}}
\newcommand{\supp}{\operatorname{supp}}
\newcommand{\dist}{\operatorname{dist}}
\newcommand{\diam}{\operatorname{diam}}
\numberwithin{equation}{section}
\newcommand{\bR}{\mathbb{R}}
\newcommand\cD{\mathcal{D}}
\newcommand\cH{\mathcal{H}}
\newcommand\cL{\mathcal{L}}
\newcommand\cN{\mathcal{N}}
\newcommand\cQ{\mathcal{Q}}
\newcommand\cM{\mathcal{M}}
\newcommand\vN{\vec{N}}
\providecommand{\norm}[1]{\lVert#1\rVert}
\renewcommand{\vec}[1]{\boldsymbol{#1}}
\def\XXint#1#2#3{{\setbox0=\hbox{$#1{#2#3}{\int}$}
		\vcenter{\hbox{$#2#3$}}\kern-.5\wd0}}
\newcommand{\p}{\partial}
\newcommand{\epsi}{\varepsilon}
\begin{document}

\subjclass[2010]{Primary 35J25, 35B65; Secondary 35J15}

\keywords{}

	\title[mixed boundary value problem]{The Dirichlet-conormal problem for the heat equation with inhomogeneous boundary conditions}

	\author[Hongjie Dong]{Hongjie Dong}	
	
	\address{
Division of Applied Mathematics, Brown University, 182 George Street, Providence, RI 02912, USA}
	
	\email{hongjie\_dong@brown.edu}
\thanks{H. Dong was partially supported by a Simons fellowship grant no.$\,$007638, the NSF under agreement DMS-2055244, and the Charles Simonyi Endowment at the Institute of Advanced Study}

	\author[Zongyuan Li]{Zongyuan Li}
	
	\address{Department of Mathematics, Rutgers University, Hill Center - Busch Campus, 110 Frelinghuysen Road, Piscataway, NJ 08854, USA}
	
	\email{zongyuan.li@rutgers.edu}
\thanks{Z. Li was partially supported by an AMS-Simons travel grant.}
	
\begin{abstract}
We consider the mixed Dirichlet-conormal problem for the heat equation on cylindrical domains with a bounded and Lipschitz base $\Omega\subset \bR^d$ and a time-dependent separation $\Lambda$. Under certain mild regularity assumptions on $\Lambda$, we show that for any $q>1$ sufficiently close to 1, the mixed problem in $L_q$ is solvable. In other words, for any given Dirichlet data in the parabolic Riesz potential space $\mathcal{L}_q^1$ and the Neumann data in $L_q$, there is a unique solution and the non-tangential maximal function of its gradient is in $L_q$ on the lateral boundary of the domain. When $q=1$, a similar result is shown when the data is in the Hardy space.
Under the additional condition that the boundary of the domain $\Omega$ is Reifenberg-flat and the separation is locally sufficiently close to a Lipschitz function of $m$ variables, where $m=0,\ldots,d-2$, with respect to the Hausdorff distance, we also prove the unique solvability result for any $q\in(1,(m+2)/(m+1))$. In particular, when $m=0$, i.e., $\Lambda$ is Reifenberg-flat of co-dimension $2$, we derive the $L_q$ solvability in the optimal range $q\in (1,2)$.
For the Laplace equation, such results were established in \cite{OB21,OB13,BC20} and \cite{DL20}.
\end{abstract}

\keywords{Mixed Dirichlet-conormal boundary value problem, inhomogeneous boundary conditions, Reifenberg flat domains and separations, 
 nontangential maximal function estimates}
\maketitle

\section{Introduction}

We consider the heat equation on a cylinder $\cQ=(-\infty,\infty)\times\Omega$ with the base $\Omega\subset \bR^d$ being bounded and Lipschitz. The lateral boundary of $\cQ$ (denoted as $\p_l\cQ$) is decomposed into two non-intersecting (open) components $\cD$ and $\cN$ separated by $\Lambda$ satisfying
$$
{\overline{\cD}}\cup\cN=\p_l\cQ,\quad \cD\cap\cN=\emptyset,\quad \Lambda=\overline{\cD}\cap\overline{\cN}.
$$
See Figure \ref{pic-domains}.
\begin{figure}
\begin{tikzpicture}
	\draw [->] (-2,-3.3) -- (-2, 0) node (taxis) [right] {$t$};
	\draw [->] (-2,-3.3) -- (2,-3.3) node (xaxis) [above] {$x$};
	\draw (0,-0.2) ellipse (1.5 and 0.2);
	\draw (-1.5,-0.2) -- (-1.5,-3);
	\draw[snake=zigzag, red] (0,-0.4) -- (0,-3.2);
	\draw (-1.5,-3) arc (180:360:1.5 and 0.2);
	\draw [dashed] (-1.5,-3) arc (180:360:1.5 and -0.2);
	\draw (1.5,-0.2) -- (1.5,-3);  
	\fill [gray,opacity=0.3] (-1.5,-0.2) -- (-1.5,-3) arc (180:360:1.5 and 0.2) -- (1.5,-0.2) arc (0:180:1.5 and -0.2);
	\node[left] at (0,-0.6) {$\Lambda$};
	\node at (1,-1.8) {$\cD$};
	\node at (-0.8,-1.8) {$\cN$};
	\node at (0.8,-0.2) {$\cQ$};
\end{tikzpicture}
\label{pic-domains}
\caption{Cylinder with a time-varying $\Lambda$.}
\end{figure}
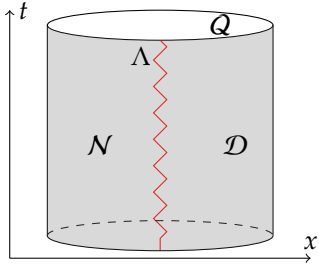
On such cylinder, for some $q\in[1,\infty)$ and boundary data $g_\cN \in L_q$ (replaced with the Hardy space $H^1$ when $q=1$) and $g_\cD\in \cL^1_q$, we consider the initial boundary value problem
\begin{equation}		\label{eqn-main}
	\begin{cases}
		u_t-\Delta u = 0  & \text{in }\, \cQ^T,\\
		\frac{\p u}{\p \vec{n}} = g_\cN  & \text{on }\, \cN^{T},\\
		u = g_\cD & \text{on }\, \cD^{T},\\
		u = 0 & \text{on }\,\{t=0\},\\
		\vec{N}(Du) \in L_q(\p_l\cQ^T).
	\end{cases}
\end{equation}
Here, for $T\in(0,\infty)$, we set
\begin{equation}\label{eqn-211105-0426}
	\cQ^T:=\{(t,x)\in \cQ : t\in(0,T)\}.
\end{equation}
Similarly, we define $\p_l\cQ^T$, $\cD^T$, $\cN^T$, and $\Lambda^T$.
The space $\cL^1_q$, roughly, include functions which together with spatial tangential derivatives and the half time derivative are all in $L_q$. For the precise definition, see \eqref{eqn-210827-0635}.

We call $u$ a solution to \eqref{eqn-main}, if $u\in \cH^{1}_{2, loc}(\cQ^T)$ is a weak solution in the usual sense, and the non-tangential maximal function of $Du$ is controlled. For such solution, the non-tangential limits of $u$ and $Du$ exist a.e. at the boundary. Hence the boundary conditions can be understood as the non-tangential limit.

The mixed Dirichlet-Neumann problems for elliptic and parabolic equations received a lot of attention, from both application and theoretical aspects. For instance, when modeling melting procedure of a piece of floating ice, one need to deal with zero Dirichlet boundary condition below the water level and zero Neumann boundary condition above the water level. Also, such problems were commonly studied in the combustion theory and in modelling exocytosis, which have a form of active transport mechanism. For applications, see \cite{CDL21} and the references therein.

The interests from the theoretical side mainly come from the low regularity feature of such problems. Let us start from elliptic (Laplace) equations. Unlike the purely Dirichlet or Neumann problem, solutions to mixed problem can be non-smooth near the interfacial boundary $\Lambda$, even when the domain and boundary values are all smooth. In particular, by examining the classical example on $\bR^2_+$:
\begin{equation*}
	u(x,y)=\operatorname{Im}(x+iy)^{1/2},
\end{equation*}
one can see that $Du\vert_{\p\bR^2_+}\in L_{2-\epsi,\text{loc}}$, but not in $L_{2,\text{loc}}$. This causes the failure of the solvability method in \cite{MR890159} for purely Dirichlet or Neumann problems, which starts from the $L_2$ solvability in \cite{JK81}. Great efforts have been made for the solvability of mixed problems. Initiated in \cite{B}, Brown studied the $L_2$ solvability with the extra geometric assumption that $\cD$ and $\cN$ meet at an cute angle. Such assumption excludes the smooth domains, but makes the aforementioned $L_2$ method applicable. See also \cite{MR2309180}. On general Lipschitz domains, Brown et. al. obtained the $L_q$ solvability for $q$ sufficiently close to $1$ in \cite{TOB}, under very weak assumptions on $\Lambda$ -- merely Ahlfors regular of Hausdorff dimension close to $d-2$ and the so-called corkscrew condition on $\cD$. Later, in \cite{BC20} an explicit solvability range $q<d/(d-1)$ was obtained, assuming $\p\Omega\in C^{1,1}$ and $\Lambda$ to be Lipschitz.

For the Laplace equation, people are also interested in the minimum assumptions on $\p\Omega$ and $\Lambda$ such that the optimal $L_{2-\epsi}$ solvability holds. In \cite{DL20}, we improved the result in \cite{BC20} and obtained the $L_q$ solvability when $q\in (1,(m+2)/(m+1))$, under the conditions that $\p\Omega$ is Lipschitz and Reifenberg flat, and $\Lambda$ is a small perturbation of a Lipschitz graph in $m$ variables, where $m=0,1,\ldots,d-2$. In particular, when $m=0$, $\Lambda$ is Reifenberg flat of codimension $2$, and the optimal solvability range $q\in (1,2)$ was achieved.

Now let us turn to heat equations. In the case of purely Dirichlet or Neumann problems on a Lipschitz cylinder $(0,T)\times\Omega$, Brown proved the $L_2$ and the optimal $L_q$ solvability for $q\in [1,2+\epsi)$ in \cite{B89} and \cite{B90}. These are the parabolic analog of the elliptic results in \cite{JK81} and \cite{MR890159}. See also the estimates \cite{MR2280778} in Besov spaces on Lipschitz cylinders. The problems on time-varying domains are more involved, as it is well known that merely the parabolic Lipschitz assumption on $\p_l\cQ$ is not enough to guarantee the solvability. By assuming certain extra smoothness of $\p_l\cQ$ in the $t$-variable, some (optimal) $L_2$ and $L_q$ solvability results were proved in \cite{MR1323804, HL96,HL05}. See also the discussions of equations with variable coefficients in \cite{MR4127944, MR3809457} and the references therein.

However, there are very few results regarding parabolic equations with mixed boundary conditions. See \cite{CDL21} and the references therein. Here, we consider such problems on domains with some time dependency -- Lipschitz cylinders with time-varying $\Lambda$. More precisely, when the base $\Omega$ is Lipschitz,  under mild conditions on $\Lambda$, we obtain the $L_1$ and $L_q$ solvability for $q$ sufficiently close to $1$. Furthermore, based on our earlier results under homogeneous boundary conditions in \cite{CDL21}, when $\p\Omega$ is also Reifenberg flat and $\Lambda$ is close to a Lipschitz graph in $m$ variables in the sense of Hausdorff distance, the solvability range can be extended to $q\in (1,(m+2)/(m+1))$, which achieves the aforementioned optimal range when $m=0$. 

In the current paper, besides the aforementioned low regularity issue, the time-dependency also brings tremendous difficulties. For instance, certain integral identities and the harmonic conjugates used in \cite{OB13, TOB} are not available in the parabolic setting. Also, we do not have a good control of $u_t$ close to $\Lambda$. Such difficulties are overcome by a duality argument combined with the De Giorgi-Nash-Moser estimate, and a dedicated decomposition in \cite{CDL21}. After the current project, we also plan to study more complicated time varying domains.

\section{Notations and main results}
\subsection{Notations and function spaces}\label{sec-211025-0917}
For 
any cylinder $\widetilde{\cQ}=(S,T)\times\omega$, we denote its parabolic and lateral boundaries as
\begin{equation*} \p_p\widetilde{\cQ}:=(\{S\}\times\omega)\cup\big([S,T)\times\p\omega\big),\quad \p_l \widetilde{\cQ}:=(S,T)\times\p\omega.
\end{equation*}
For two points $X=(t,x)$ and $Y=(s,y)$ in $\bR^{1+d}$, we denote their parabolic distance as
\begin{equation*}
	\operatorname{dist}(X,Y)=|X-Y|:=|x-y| + |t-s|^{1/2}=\big(\sum_{i=1}^d |x^i-y^i|^2\big)^{1/2} + |t-s|^{1/2}.
\end{equation*}
Furthermore, let
$$
d(X):=\dist(X,\partial_l\cQ)=\dist(x,\Omega),\quad
\delta(X):=\dist(X,\Lambda).
$$
It is convenient to work with parabolic cubes and surface cubes with center $X=(t,x^1,\ldots,x^d)$
\begin{equation}
	\label{eq10.27}
	\begin{split}
		Q_r(X) := (t-r^2,t+r^2)\times \prod_{i=1}^d(x^i-r,x^i+r),\\
		\cQ_r(X) := Q_r(X)\cap\cQ,\quad \Delta_r(X) := Q_r(X)\cap\p_l\cQ.
	\end{split}
\end{equation}
As usual, the center will be omitted when it is the origin. The parabolic non-tangential cone (restricted to $\cQ^T$) with aperture $\alpha>0$ and the corresponding non-tangential maximal function for $f\in L_{1,\text{loc}}(\cQ^T)$ 
are defined to be
\begin{equation*}
	\begin{split}
		&\Gamma(X):=\{Y\in\cQ^T: |Y-X|\leq (1+\alpha)\operatorname{dist}(Y,\p_l\cQ)\},\\
		&\vec{N}(f)(X):=\sup_{Y\in\Gamma(X)}|f(Y)|
	\end{split}
\end{equation*}
for $X=(t,x)\in\p_l\cQ^T$.
We also define their truncated versions as
\begin{equation}\label{eqn-210918-0600}
	\Gamma_r(X):=\Gamma(X_0)\cap \cQ_r(X),\quad \vec{N}_{r}(f):=\sup_{Y\in \Gamma_r(X)}|f(Y)|,\quad\text{and}\,\,\vec{N}^{r}(f):=\sup_{Y\in \Gamma(Y)\setminus\Gamma_r(X)}|f(Y)|.
\end{equation}
In the paper we will choose $\alpha$ large enough. As usual, since the particular value of $\alpha$ only influences the constants in the estimates, we omit the dependence on it as the norms are comparable.


Next, we introduce some function spaces. First, we define spaces for weak solutions. As in \cite{CDL21}, by $u\in \mathbb{H}^{-1}_{p}(\cQ)$ we mean that there exist $g=(g_1,\ldots, g_d)\in L_{p}(\cQ)^d$ and $f\in L_{p}(\cQ)$ such that
$$
u=D_i g_i+f \quad \text{in }\, \cQ, \quad g_in_i=0 \quad \text{on }\, \cN
$$
holds in the distribution sense,
where $n=(n_1,\ldots,n_d)$ is the outward unit normal to $\partial \Omega$,
and the norm
$$
\|u\|_{\mathbb{H}^{-1}_{p}(\cQ)}=\inf\big\{ \|g\|_{L_{p}(\cQ)}+\|f\|_{L_{p}(\cQ)} : u=D_ig_i+f\, \text{ in }\, \cQ, \, g_in_i=0 \, \text{ on }\, \cN\big\}
$$
is finite.
We set
$$
\cH_{p}^1(\cQ)=\big\{u: u\in W^{0,1}_{p}(\cQ), \, u_t\in \mathbb{H}^{-1}_{p}(\cQ)\big\}
$$
equipped with a norm
$$
\|u\|_{\cH^1_{p}(\cQ)}
=\|u\|_{W^{0,1}_{p}(\cQ)}+\|u_t\|_{\mathbb{H}^{-1}_{p}(\cQ)}.
$$
Here $W^{k,l}_p(\cQ)$ is the usual Sobolev space with $\p_t^\alpha\p_x^\beta u\in L_p$ when $\alpha\leq k$ and $|\beta|\leq l$. Now we define the spaces for boundary data. On $\p_l\cQ = \cD\cup\cN$, we call a function $a$  an atom, if
\begin{align*}
	\supp(a)\subset\Delta_r\,\,\text{for some}\,\, r<R_0/2,\quad
	\norm{a}_{L_\infty}\leq \frac{1}{|\Delta_r|},\quad
	\fint_{\Delta_r}a = 0\quad\text{when}\,\,\Delta_r\subset\cN.
\end{align*}
Then, $H^1(\p_l\cQ)$ is defined to be the $l^1$-span of the atoms. More precisely, $f\in H^1(\p_l\cQ)$ if and only if there exists atoms $\{a_j\}$ and real numbers $\{\lambda_j\}$ with $\sum|\lambda_j|<\infty$ such that $f=\lambda_j a_j$. The norm is defined as
\begin{equation*}
	\|f\|_{H^1(\p_l\cQ)}:=\inf\sum_j|\lambda_j|,
\end{equation*}
where the infimum is taken with respect to all such decompositions.
We define $H^1(\cN)$ to be the restriction of $H^1(\p_l\cQ)$, with the norm being defined as the infimum among all extensions.

We also need to define spaces for the Dirichlet data, for which we also need to include the tangential derivative and the ``half time derivative''. On the hyperplane $\bR\times\bR^{d-1}$, denote a typical point $X'=(t,x')$. We define the parabolic Riesz potential of the first order
\begin{equation*}
	I^{par}_1(g)(X') = \int_{-\infty}^t \int_{\bR^{d-1}} \frac{2}{(4\pi (t-s))^{d/2}}e^{-|x'-y'|^2/(4(t-s))}g(s,y')\,dy'ds.
\end{equation*}
Then we define
\begin{equation}\label{eqn-210827-0635} \mathcal{L}_p^1(\bR\times\bR^{d-1}):=\big\{f:f=I^{par}_1(g),\, g\in L_p(\bR\times\bR^{d-1})\big\},\quad\|f\|_{\mathcal{L}_p^1} = \|g\|_{L_p}.
\end{equation}
From this, we can define the space $\mathcal{L}^1_p(S)$ when $S=\bR\times\{x^1=\varphi(x')\}$ is a Lipschitz graph, by projection. Furthermore, the space $\mathcal{L}^1_p(\p_l\cQ) = \mathcal{L}^1_p(\bR\times\p\Omega)$ can be defined via a partition of unity, and hence $\mathcal{L}^1_p(\cD)$ by restriction. When $p=1$, $\mathcal{L}^1_1$ is defined by replacing $\|g\|_{L_p}$ with $\|g\|_{H^1}$ in \eqref{eqn-210827-0635}.

The following properties can be found in \cite[pp.6]{B90}:
\begin{equation*}
	\|f\|_{\mathcal{L}^1_p}\approx \|\p_t I_1^{par}(f)\|_{L_p} + \sum_i\|\p_{x^i}f\|_{L_p}
\end{equation*}
and when $p=2$,
\begin{equation*}	\|f\|_{\mathcal{L}^1_2}^2
	\approx\int_{\bR^{d-1}}\int_{\bR}|\hat{f}(\tau,x')|^2|\tau|\,d\tau dx'
	+\sum_i\|\p_{x^i}f\|_{L_2}^2,
\end{equation*}
where $\hat{f}$ is the Fourier transform in $t$. This means, roughly speaking, $\mathcal{L}^1_p$ includes functions with a half time derivative and the whole (tangential) spatial derivatives in $L_p$.

In the above, we define functions on infinite-long cylinders ($t\in\bR$). In order to solve the initial Dirichlet problem, we also need to take the compatibility condition into consideration. For a smooth function $f$ on $[S,T]\times \partial\Omega$ satisfying $f(S,\cdot)=0$ on $\partial\Omega$, we also define
\begin{equation*} \|f\|_{\cL^1_p((S,T)\times\p\Omega)}:=\|\tilde{f}\|_{\cL^1_p(\bR\times\p\Omega)},
\end{equation*}
where
\begin{equation}\label{eqn-211024-0621}
	\tilde{f}(t,x):=
	\begin{cases}
		0\,\,\text{when}\,\,t\in(-\infty,S)\cup(2T-S,\infty),\\
		f\,\,\text{when}\,\,t\in(S,T),\\
		f(2T-t,x)\,\,\text{when}\,\,t\in(T,2T-S).
	\end{cases}
\end{equation}
Furthermore, the space $\cL^1_p((S,T)\times\p\Omega)$ is defined as the completion of all smooth functions which equal to zero when $t=S$. As usual, $\cL^1_p(\cD)$ is defined via restriction. For details about $\cL^1_p$ spaces, we refer the reader to \cite{B90}.

%
\subsection{Assumptions and main results}
Let $R_0>0$ be a fixed small constant. Throughout the paper, we impose the following Lipschitz assumption on $\partial \Omega$.
\begin{assumption}[$(M,R_0)$-Lipschitz]\label{ass-small-Lip}
There exists some constant $R_0>0$, such that, for any $x_0\in \p\Omega$, there exists a coordinate system $x=(x^1,x')$ and a Lipschitz function $\psi_0:\bR^{d-1}\rightarrow \bR$ such that
\begin{equation*}
\Omega_{R_0}(x_0):= \Omega \cap B_{R_0}(x_0) = \{x \in B_{R_0}(0) : x^1 > \psi_0(x')\}\quad\mbox{and}\quad  |D\psi_0(x')|<M \quad \mbox{a.e. }
\end{equation*}
\end{assumption}

Clearly, certain assumptions on $\Lambda$ are also needed for the regularity of solutions.  Assumption \ref{ass-0301-2356} requires that $\Lambda, \cD$, and $\cN$ all vary continuously in time.
\begin{assumption}\label{ass-0301-2356}
	For any $\epsi>0$ and $L>0$, there exist a time partition
	$$
	-L=t_0<t_1<\cdots<t_{n}=L
	$$
	and decompositions $\p\Omega=D^{t_k}\cup N^{t_k}$, $k\in \{1,\ldots, n\}$, such that
	\begin{equation*}
		\cD(t)\subset D^{t_k},\quad H^d(\cD(t), D^{t_k})<\epsi,\quad \forall t\in[t_{k-1},t_k),
	\end{equation*}
	where $\cD(t)=\{x\in \partial \Omega: (t,x)\in \cD\}$ and $H^d$ is the usual $d$-dimensional Hausdorff distance.
\end{assumption}
In particular, Assumption \ref{ass-0301-2356} excludes the possibility shown in Figure \ref{pic-impossible-domain}.
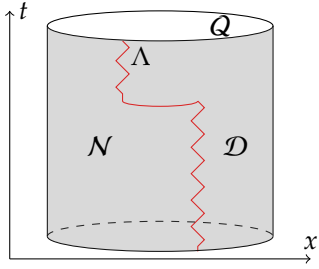
\begin{figure}[!hb]
		\begin{tikzpicture}
			\draw [->] (-2,-3.3) -- (-2, 0) node (taxis) [right] {$t$};
			\draw [->] (-2,-3.3) -- (2,-3.3) node (xaxis) [above] {$x$};
			\draw (0,-0.2) ellipse (1.5 and 0.2);
			\draw (-1.5,-0.2) -- (-1.5,-3);
			\draw[snake=zigzag, red] (-0.5,-0.4) -- (-0.5,-1.2);
			\draw[red] (-0.5,-1.2) arc (180:360:0.5 and 0.07);
			\draw[snake=zigzag, red] (0.5,-1.2) -- (0.5,-3.2);
			\draw (-1.5,-3) arc (180:360:1.5 and 0.2);
			\draw [dashed] (-1.5,-3) arc (180:360:1.5 and -0.2);
			\draw (1.5,-0.2) -- (1.5,-3);  
			\fill [gray,opacity=0.3] (-1.5,-0.2) -- (-1.5,-3) arc (180:360:1.5 and 0.2) -- (1.5,-0.2) arc (0:180:1.5 and -0.2);
			\node[left] at (0,-0.6) {$\Lambda$};
			\node at (1,-1.8) {$\cD$};
			\node at (-0.8,-1.8) {$\cN$};
			\node at (0.8,-0.2) {$\cQ$};
		\end{tikzpicture}
	\label{pic-impossible-domain}
	\caption{Impossible shape of $\Lambda$.}
\end{figure}

Roughly speaking, Assumption \ref{ass-210730-0540} below gives information about the Hausdorff dimension of $\Lambda$. It is satisfied, for example, when $\Lambda$ is Lipschitz or Assumption \ref{ass-210609-0500-3} holds for a small constant $\theta$. Here, $A$ and $R_0$ are fixed constants, and $\epsi_0$ is a parameter to be chosen small later.
\begin{assumption}[$\epsi_0$]\label{ass-210730-0540}
	For any $s>-1+\epsi_0$, $r<R_0$, and any point $X\in\Lambda$,
	\begin{align}
		\int_{Q_r(X)\cap\p_l\cQ}\delta^s\,d\sigma\approx r^{d+1+s},\label{eqn-210622-0316-1}\\
		\int_{Q_r(X)\cap\cQ}\delta^{s-1}\,dZ\approx r^{d+1+s}.\label{eqn-210622-0316-2}
	\end{align}
Here $\delta(X)=\dist(X,\Lambda)$ and $Q_r(X)$ is a parabolic cube defined in \eqref{eq10.27}.
\end{assumption}

	\begin{remark}
	Actually, such integrability conditions are also what essentially used in the elliptic cases \cite{TOB, OB13}. See \cite[Lemma~2.4]{TOB}.
\end{remark}

Assumption \ref{ass-210809-0731} requires that near $\Lambda$, the Dirichlet boundary $\cD$ cannot be too small locally. For example, cusps are not allowed.
\begin{assumption}[Corkscrew]\label{ass-210809-0731}
	There exists some $M>0$, such that for all $X_0\in\Lambda$ and $R\in(0,R_0]$, there exists a surface cube $\Delta_{R/M}(X)\subset \cD$ with $|X-X_0|<R$. For the definition of surface cubes, see \eqref{eq10.27}.
\end{assumption}

In the next assumption, it is assumed that $\p\Omega$ is locally flat and $\Lambda$ is locally close to a Lipschitz graph.
\begin{assumption}[$\theta, m$]		\label{ass-210609-0500-3}
	Let $m\in \{0,1,\ldots, d-2\}$ and $M\in (0, \infty)$.
	\begin{enumerate}[$(a)$]
		\item
		For any $x_0\in \partial \Omega$ and $R\in (0, R_0]$, there is a (spatial) coordinate system depending on $x_0$ and $R$ such that in this coordinate system, we have
		\begin{equation}		\label{200429@eq1}
			\{y: y^1>x_0^1+\theta R\}\cap B_R(x_0)\subset \Omega_R(x_0)\subset \{y: y^1>x_0^1-\theta R\}\cap B_R(x_0).
		\end{equation}
		
		\item
		For any $X_0=(t_0,x_0)\in \Lambda$ and $R\in (0, R_0]$,   there exist
		a (spatial) coordinate system and a Lipschitz function $\phi$ of $m$ variables with Lipschitz constant $M$,  such that in the new coordinate system (called the coordinate system associated with $(X_0, R)$), we have  \eqref{200429@eq1},
		$$
		\big(\partial_{l} \cQ\cap Q_R(X_0) \cap \{(s,y): y^2>\phi(y^3,\ldots,y^{m+2})+\theta R\} \big)\subset \cD,
		$$
		$$
		\big(\partial_{l} \cQ \cap Q_R(X_0)\cap  \{(s,y): y^2<\phi(y^3,\ldots, y^{m+2})-\theta R\}\big)\subset \cN,
		$$
		and
		$$
		\phi(x_0^3,\ldots, x_0^{m+2})=x_0^2.
		$$
		Here, if $m=0$, then the function $\phi$ is  understood as the constant function $\phi\equiv x_0^2$.
	\end{enumerate}
\end{assumption}
It is not difficult to see that when $\theta$ is sufficiently small, Assumption \ref{ass-210609-0500-3} ($\theta, m$) implies Assumptions \ref{ass-0301-2356} and \ref{ass-210809-0731}. In Appendix \ref{app-210919-1049}, we show how Assumption \ref{ass-210609-0500-3} (b) with a small $\theta$ implies Assumption \ref{ass-210730-0540} with a small $\epsi_0$.

Now we are ready to state our main results.
\begin{theorem}\label{thm-210830-0419}
	Suppose that $\Lambda$ satisfies Assumptions \ref{ass-0301-2356} and \ref{ass-210809-0731}. There exists $\epsi_0 =\epsi_0(d,M)>0$ small enough, such that if Assumption \ref{ass-210730-0540} is satisfied, we have the following.
	\begin{enumerate}
		\item ($L_1$-existence) For any $g_{\cN}\in H^1(\cN^T)$ and $g_{\cD}\in \mathcal{L}^1_1(\cD^T)$, there exists a solution to \eqref{eqn-main} with $q=1$, satisfying
		\begin{equation}\label{eqn-210910-0549}
			\norm{\vec{N}(Du)}_{L_1(\p_l\cQ^T)}\leq C(\norm{g_{\cN}}_{H^1(\cN^T)} + \norm{g_\cD}_{\mathcal{L}^1_1(\cD^T)}),
		\end{equation}
where $C=C(d,R_0,M,\diam(\Omega), T)$.
		\item ($L_1$-uniqueness) Suppose that $u$ is a solution to \eqref{eqn-main} with $q\geq 1$, $g_\cN=0$, and $g_\cD=0$, then we must have $u\equiv 0$.
		\item ($L_{1+\epsi}$ solvability). There exist some $\epsi_0=\epsi_0(d,M)>0$ small enough, such that for any $\varepsilon\in (0,\varepsilon_0)$, if $g_{\cN}\in L_{1+\epsi}(\cN^T)$ and $g_{\cD}\in \mathcal{L}^1_{1+\epsi}(\cD^T)$, then the unique solution $u$ obtained in (a) satisfies $\vec{N}(Du)\in L_{1+\epsi}(\p_l\cQ^T)$, with
		\begin{equation*}
			\norm{\vec{N}(Du)}_{L_{1+\epsi}(\p_l\cQ^T)}\leq C(\norm{g_{\cN}}_{L_{1+\epsi}(\cN^T)} + \norm{g_\cD}_{\mathcal{L}^1_{1+\epsi}(\cD^T)}),
		\end{equation*}
where $C=C(d,R_0,M,\diam(\Omega), T,\varepsilon)$.
		\end{enumerate}
\end{theorem}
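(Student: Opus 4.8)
The plan is to reduce the inhomogeneous problem \eqref{eqn-main} to the homogeneous case already handled in \cite{CDL21} together with a solvable auxiliary initial boundary value problem. For part (a), given $g_\cN\in H^1(\cN^T)$ and $g_\cD\in\cL^1_1(\cD^T)$, I would first solve a pure ``data'' problem producing a function $w$ with the correct non-tangential boundary behaviour: one extends $g_\cD$ to a function on the whole lateral boundary lying in $\cL^1_1(\p_l\cQ^T)$ (using the extension built into the definition of $\cL^1_1(\cD)$), and then uses the layer-potential / single-layer-type representation for the heat equation adapted to the mixed configuration -- i.e. the operators already constructed in \cite{CDL21} for the homogeneous mixed problem -- to produce $w\in\cH^1_{2,\loc}(\cQ^T)$ solving $w_t-\Delta w=0$ in $\cQ^T$, $w=0$ at $t=0$, with $\vec N(Dw)\in L_1(\p_l\cQ^T)$ and boundary traces $w=g_\cD$ on $\cD^T$, $\partial w/\partial\vec n = g_\cN$ on $\cN^T$ in the non-tangential sense, together with the quantitative bound
\begin{equation*}
\norm{\vec N(Dw)}_{L_1(\p_l\cQ^T)}\le C\big(\norm{g_\cN}_{H^1(\cN^T)}+\norm{g_\cD}_{\cL^1_1(\cD^T)}\big).
\end{equation*}
The point is that the invertibility of the relevant boundary operator on $L_1$/$H^1$ is exactly what the homogeneous solvability theory of \cite{CDL21} supplies under Assumptions \ref{ass-0301-2356}, \ref{ass-210809-0731}, and \ref{ass-210730-0540} with $\epsi_0$ small; the inhomogeneous right-hand sides just sit in the range of those operators on the atomic/Riesz-potential spaces. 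Setting $u:=w$ then gives a solution to \eqref{eqn-main} with $q=1$ and the estimate \eqref{eqn-210910-0549}.

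The technical heart is the construction and mapping properties of $w$. Here I would proceed in the order: (i) localize via a partition of unity subordinate to the covering of $\p_l\cQ$ by cylinders $Q_{R_0}(X_0)$, flattening $\partial\Omega$ by the Lipschitz charts of Assumption \ref{ass-small-Lip} and, near $\Lambda$, using the time-continuity of Assumption \ref{ass-0301-2356} to freeze $\Lambda$ on each time slab $[t_{k-1},t_k)$; (ii) on each piece solve a model mixed problem on a half-space cylinder with flat separation, where one can write the solution explicitly through the parabolic Riesz potential $I_1^{par}$ for the Dirichlet part and a heat single-layer potential for the Neumann part; (iii) estimate the non-tangential maximal function of the gradient of each piece in $L_1$ (resp.\ $H^1$) by the known $H^1\to L_1$ boundedness of the relevant maximal operators -- this is the parabolic analog of the elliptic estimates in \cite{OB13,BC20}, and the atomic structure of $H^1(\cN^T)$ reduces it to a single-atom estimate with the cancellation $\fint_{\Delta_r}a=0$ used exactly as in those references; (iv) glue the pieces, absorbing commutator terms coming from the cutoffs into lower-order contributions that are controlled by interior De Giorgi--Nash--Moser bounds away from $\p_l\cQ$. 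The compatibility condition $g_\cD(0,\cdot)=0$ and the even-reflection device \eqref{eqn-211024-0621} are needed so that the extension of $g_\cD$ to $t\in\bR$ lands in $\cL^1_1(\bR\times\p\Omega)$ and the initial condition $u=0$ at $t=0$ is consistent.

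For part (b), uniqueness, I would argue by duality: suppose $u$ solves \eqref{eqn-main} with $g_\cN=g_\cD=0$ and $\vec N(Du)\in L_q(\p_l\cQ^T)$, $q\ge1$. Testing the equation against the solution $v$ of the adjoint (backward) mixed problem with smooth compactly supported data, and integrating by parts over $\cQ^T$, all boundary terms on $\cD^T$ and $\cN^T$ vanish because $u=0$ on $\cD^T$ and $\partial u/\partial\vec n=0$ on $\cN^T$, while the term at $t=0$ vanishes by the zero initial condition and the term at $t=T$ is handled by letting the support of $v$ avoid $\{t=T\}$; this forces $\int_{\cQ^T} u\,\varphi = 0$ for a dense class of $\varphi$, hence $u\equiv0$. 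The control of $\vec N(Du)$ in $L_q$ is what legitimizes the boundary integrations by parts (the non-tangential traces of $u$ and $Du$ exist a.e.\ and the relevant surface integrals converge), precisely as in \cite{CDL21}. This is essentially the same duality-plus-De Giorgi--Nash--Moser scheme advertised in the introduction.

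For part (c), $L_{1+\epsi}$ solvability, I would run a real-interpolation / good-$\lambda$ argument on top of the $L_1$ result. The $L_1$ bound of (a), combined with a reverse-Hölder or self-improvement estimate for $\vec N(Du)$ near the boundary -- which follows from the interior regularity of $u$ and the local comparability of $\vec N(Du)$ with an $L_2$-averaged version, exactly the mechanism used in \cite{DL20,BC20} in the elliptic setting -- yields, via the Calder\'on--Zygmund / good-$\lambda$ machinery, a weak-type improvement that upgrades to strong $L_{1+\epsi}$ for $\epsi<\epsi_0(d,M)$. Concretely, one shows that for atoms the output already lies in $L_{1+\epsi}$ with uniform bound, then sums; alternatively one interpolates between the $H^1\to L_1$ estimate and an $L_p\to L_p$ estimate valid for some $p>1$ coming from the homogeneous theory of \cite{CDL21}. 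The uniqueness from (b) shows the $L_{1+\epsi}$ solution coincides with the one from (a). The main obstacle, as the introduction flags, is step (ii)--(iii) of part (a): unlike the Laplace case there is no harmonic conjugate and no clean integral identity, and one has no good pointwise control of $u_t$ near $\Lambda$, so the non-tangential maximal estimate for the gradient of the single-layer construction must be obtained by the duality argument with the adjoint problem rather than by an explicit Rellich-type identity -- making the careful bookkeeping of the $\cL^1_p$ (half-time-derivative) norms under localization and reflection the most delicate part.
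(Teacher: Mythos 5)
Your proposal for part (a) leans on a layer-potential / boundary-operator-inversion strategy: localize, flatten, freeze $\Lambda$ on time slabs, write explicit model solutions via heat single-layer potentials and $I^{par}_1$, and sum. The paper explicitly rules this out: at the start of Section~\ref{sec-211018-0507} it notes that for the heat equation with mixed conditions one cannot find the representation formulae from \cite{OB13}, nor the integral identities $\int_{\partial\Omega}\partial u/\partial n\,d\sigma=0=\int_{\partial\Omega}(n_j\partial_i u-n_i\partial_j u)\,d\sigma$ used in \cite{TOB,OB13}. The paper's actual route is the opposite of layer potentials: reduce to $g_\cD=0$ via \cite{B90}, take $g_\cN$ an atom, and prove directly a dyadic decay estimate (Proposition~\ref{prop-210831-0431}) for $Du$ on annuli $\Sigma_k$ around $\supp g_\cN$. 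The decay comes from Lemma~\ref{lem-210706-0149}, which is a duality $+$ De Giorgi--Nash--Moser argument following \cite{DK17}: one pairs $Du$ against a test function, solves the adjoint mixed problem for $v$, uses the atom's cancellation or the vanishing of $v$ on $\cD$ to subtract a constant, and exploits the H\"older continuity of $v$ on the scale of $\supp g_\cN$. This replaces any Green's-function or potential-theoretic estimate. Your step (ii)--(iii) therefore cannot be completed as stated, and step (iv)'s ``absorbing commutator terms'' would produce error terms you have no mechanism to control.

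For part (b) you correctly identify duality against a backward solution $v$, but the sentence ``the control of $\vN(Du)$ in $L_q$ is what legitimizes the boundary integrations by parts'' is where the real work is being hidden. The obstruction is that $u$ is not an admissible test function for \eqref{eqn-210809-0453} because $Du$ is out of control near $\Lambda$ (and one has no good bound on $u_t$ there at all). Section~\ref{sec-210817-0523} resolves this by regularizing $u$: multiply by a cutoff $\eta$ (built from regularized distances in Lemma~\ref{lem-211105-0444}) that vanishes near $\cD$ and is identically $1$ on $\cN$, translate inward by $\tau_h^{(k)}$ near $\cN$, integrate by parts with this smoothed object against $v$, and pass $h\to 0$. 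The convergence of the singular terms $\int (u\chi_k)\circ\tau_h^{(k)}\Delta\eta\,v$ and $\int (u\chi_k)\circ\tau_h^{(k)}\p_t\eta\,v$ near $\Lambda$ requires the H\"older decay $|v|\lesssim\dist(X,\cD)^\beta$ from De Giorgi--Nash--Moser, the corkscrew condition to turn the fundamental theorem of calculus into a bound by $\vN(Du)$ on a boundary cube, and the integrability $\int_{\cQ_{Ch}}\delta^{\beta-2}\lesssim h^{d+\beta}$ from Assumption~\ref{ass-210730-0540} with $\epsi_0<\beta$. None of this is accessible by ``integrate by parts since $\vN(Du)\in L_q$''; without the construction the argument does not close.

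For part (c), your first sub-option -- show each atom's output is in $L_{1+\epsi}$ uniformly and then sum -- does not work: the $\ell^1$ atomic decomposition only survives under the triangle inequality in $L_1$, not in $L_{1+\epsi}$. Your second sub-option (interpolating the $H^1\to L_1$ bound with some $L_p$ bound) is in the right family, but the paper uses a Shen/Caffarelli--Peral real-variable interpolation lemma (Lemma~\ref{lem-210827-0402}) applied to $F=\big(\cM(\vN(Du)^{1/2})\big)^2$, with a reverse-H\"older inequality (Proposition~\ref{prop-210825-0426}) for the piece $v=u-w$ with vanishing local Neumann data. That reverse-H\"older estimate hinges on the boundary local estimates (Lemmas~\ref{lem-210216-0333-1}, \ref{lem-210216-0333-2}, \ref{lem-210706-0144}) and a careful comparison between the truncated maximal functions $\vN_{r/2}$, $\vN^{r/2}$ and $\cM_{r,0}$, $\cM_{r,\infty}$; it is not a byproduct of interior regularity alone. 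You would need to supply these ingredients for the scheme you sketch to close.
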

Furthermore, with the ``local flatness'' assumption, we have the optimal regularity.
\begin{theorem}\label{thm-211009-0913}
	For any $q\in(1,(m+2)/(m+1))$, there exists $\theta=\theta(d,m,q)>0$ sufficiently small, such that if Assumption \ref{ass-210609-0500-3} ($\theta,m$) is satisfied, then for any $g_\cN\in L_q(\cN^T)$ and $g_\cD\in \mathcal{L}^1_q(\cD^T)$, there exists a unique solution to \eqref{eqn-main}, satisfying
	\begin{equation}\label{eqn-210901-0628}
		\norm{\vec{N}(Du)}_{L_q(\p_l\cQ^T)}\leq C(\norm{g_{\cN}}_{L_q(\cN^T)} + \norm{g_\cD}_{\mathcal{L}^1_q(\cD^T)}),
	\end{equation}
where $C=C(d,R_0,M,\diam(\Omega), T, q)$.
\end{theorem}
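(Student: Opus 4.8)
The plan is to deduce the optimal-range statement from the ``soft'' result Theorem \ref{thm-210830-0419} together with the homogeneous-data estimates established in \cite{CDL21} under the local flatness Assumption \ref{ass-210609-0500-3}. First I would record that, for $\theta$ small enough, Assumption \ref{ass-210609-0500-3} $(\theta,m)$ implies Assumptions \ref{ass-0301-2356}, \ref{ass-210809-0731}, and (by the computation in Appendix \ref{app-210919-1049}) Assumption \ref{ass-210730-0540} with a small $\epsi_0$; hence Theorem \ref{thm-210830-0419} already gives existence, uniqueness, and the $L_{1+\epsi}$ estimate \eqref{eqn-210910-0549}. It remains to upgrade the exponent range from $(1,1+\epsi_0)$ all the way to $(1,(m+2)/(m+1))$. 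Uniqueness for $q$ in this larger range is immediate from part (b) of Theorem \ref{thm-210830-0419} (stated there for all $q\geq1$), so the real content is the a priori estimate \eqref{eqn-210901-0628} for the non-tangential maximal function $\vec N(Du)$.

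The core step is a good-$\lambda$ / real-variable argument localized near $\Lambda$, for which the key input is a reverse-Hölder (or $\epsi$-improvement) inequality for $\vec N(Du)$ on parabolic surface cubes $\Delta_R(X_0)$ centered at $X_0\in\Lambda^T$. Concretely: split $u = v + w$ where $w$ solves \eqref{eqn-main} with homogeneous Dirichlet/Neumann data on $\Delta_{2R}(X_0)$ (so $v$ has data supported away from the cube and is ``smooth'' there, handled by interior and boundary Cadeberg–Nash–Moser estimates for the purely Dirichlet or purely Neumann heat problem), and $w$ carries the singular behavior at $\Lambda$. For $w$ one invokes the decomposition of \cite{CDL21}: under Assumption \ref{ass-210609-0500-3}, solutions of the homogeneous mixed problem obey the gradient bound
\begin{equation*}
\fint_{\Delta_R(X_0)} \vec N_{cR}(Dw)^q \, d\sigma \le C \Big( \fint_{\Delta_{2R}(X_0)} |Dw|\, dZ \Big)^q
\end{equation*}
precisely for $q<(m+2)/(m+1)$, the exponent being dictated by the model solution $\operatorname{Im}(x+iy)^{1/2}$ modulated in the $m$ flat variables of $\phi$. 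Feeding this reverse-type inequality into the standard stopping-time covering argument (as in \cite{TOB,DL20,BC20} in the elliptic case) yields the distributional estimate $|\{\vec N(Du)>\lambda\}|\lesssim \lambda^{-q}(\|g_\cN\|_{L_q}+\|g_\cD\|_{\cL^1_q})^q$ and then \eqref{eqn-210901-0628} by integration; the time-slab geometry is handled by the partition $t_0<\cdots<t_n$ from Assumption \ref{ass-0301-2356}, and the initial condition $u|_{t=0}=0$ lets one extend/reflect in $t$ as in \eqref{eqn-211024-0621} so the $\cL^1_q$ norm of $g_\cD$ is well-defined.

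The main obstacle, as the introduction flags, is the time dependence: unlike the elliptic setting there is no harmonic-conjugate trick and, crucially, no usable pointwise control of $u_t$ near $\Lambda$, so the gradient estimate for the homogeneous piece $w$ cannot be proved by the \cite{OB13,TOB} integral identities. This is where I would lean on the duality argument of \cite{CDL21}: to estimate $\int \vec N(Dw)^q$ one tests against an $L_{q'}$ function, solves an adjoint (backward) mixed heat problem, and uses the De Giorgi–Nash–Moser oscillation decay together with the weighted integrability bounds \eqref{eqn-210622-0316-1}–\eqref{eqn-210622-0316-2} (with exponent $s$ close to the sharp value $-1+\epsi_0$ forced by Appendix \ref{app-210919-1049}) to absorb the singular layer $\{\delta(Z)\lesssim R\}$ around $\Lambda$. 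A secondary technical point is the passage from the infinite cylinder, on which $\cL^1_q$ and $I_1^{par}$ are defined, to the finite cylinder $\cQ^T$ with the compatibility condition $g_\cD(0,\cdot)=0$; this is routine given \eqref{eqn-211024-0621} but must be tracked so that all constants depend only on $d,R_0,M,\diam(\Omega),T,q$ as asserted.
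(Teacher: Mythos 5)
Your skeleton matches the paper's (reduce to $g_\cD=0$, localize on surface cubes, split $u$ into a piece $w$ carrying the data on a fixed dilate of the cube and a piece $v=u-w$ with homogeneous data on that dilate, reverse H\"older for the homogeneous piece with the exponent range $(1,(m+2)/(m+1))$ traced back to \cite[Lemma~6.1]{CDL21}, then a Shen/Caffarelli--Peral real-variable lemma, Lemma~\ref{lem-210827-0402}). But there is a genuine gap in the step where you feed the reverse-type inequality into the stopping-time argument. Your proposed bound controls a $q$-average of a \emph{truncated} non-tangential maximal function by an $L_1$-average of $|Dw|$ in the bulk; what the interpolation Lemma~\ref{lem-210827-0402} needs is condition~\eqref{eqn-210827-0358-2}: a $q$-average of a boundary quantity $R_\Delta$ over $\Delta$ controlled by the $L_1$-average of $F$ over $16\Delta$. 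If you take $F=\vec N(Du)$ and $R_\Delta=\vec N(Dv)$ this fails, because the \emph{far} part $\vec N^{r/2}(Dv)$ sees $Dv$ deep in the cylinder, which is only controlled by averages of $\vec N(Dv)$ over surface cubes much larger than $16\Delta$. The paper's fix is to run the argument with $F=\big(\mathcal M(\vec N(Du)^{1/2})\big)^2$, $R_\Delta=\big(\mathcal M(\vec N(Dv)^{1/2})\big)^2$, $F_\Delta=\big(\mathcal M(\vec N(Dw)^{1/2})\big)^2$: the boundary Hardy--Littlewood maximal function $\mathcal M$ globalizes $F$ so the far part of $\vec N(Dv)$ can be absorbed into a local average of $F$, and the square-root/square device preserves $L_1$-integrability (since $\vec N(Du)\in L_1$ implies $\vec N(Du)^{1/2}\in L_2$, hence $\mathcal M(\vec N(Du)^{1/2})^2\in L_1$). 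This is exactly the content of Proposition~\ref{prop-210825-0426} and the estimate \eqref{eqn-210827-0241-2}. Without this device your covering argument does not close starting from the $H^1/L_1$ solvability of Theorem~\ref{thm-210830-0419}~(a).

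Separately, you attribute the reverse H\"older for the homogeneous piece to a duality argument. That is not how the paper obtains it, nor is it where the exponent threshold enters. Duality (test function solving a backward mixed problem, De Giorgi--Nash--Moser oscillation decay) is used in Lemma~\ref{lem-210706-0149} to replace the Green's-function estimates of \cite{TOB,OB13} and yields the \emph{decay} estimate needed for atomic data; it feeds into Theorem~\ref{thm-210830-0419}~(a), not into Theorem~\ref{thm-211009-0913}. The boundary reverse H\"older, Lemma~\ref{lem-210706-0144}, is instead proved by H\"older's inequality, the weighted estimate Lemma~\ref{lem-210622-1233}, and the bulk self-improvement Lemma~\ref{lem-210903-0918}~(b) (from \cite{CDL21}), with the restriction $q<(m+2)/(m+1)$ coming from the admissible exponents of $\delta$ in Assumption~\ref{ass-210730-0540} together with $p_0<2(m+2)/(m+1)$. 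So your instinct to invoke the weighted integrability is right; the duality is the wrong mechanism for this particular step.
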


The rest of the paper is organized as follows. First, we prove the existence of $\cH^1_2$-weak solutions and a global $L_2$ estimate in Section \ref{sec1.1}. In Section \ref{sec1.2} - \ref{sec-211031-1136}, we prove some basic weighted and unweighted local estimates for pure Dirichlet, pure Neumann, and mixed problems. With all these preparations, we prove the $L_1$ existence result in Theorem \ref{thm-210830-0419} (a) in Section \ref{sec-211018-0507}. The uniqueness in Theorem \ref{thm-210830-0419} (b) is proved in Section \ref{sec-210817-0523}. Finally, the higher regularity results in Theorem \ref{thm-211009-0913} and Theorem \ref{thm-210830-0419} (c) are proved in Section \ref{sec-211018-0509}.

\section{Preliminary}

\subsection{Existence of \texorpdfstring{$\cH^1_2$}{H12}-weak solutions}    \label{sec1.1}
To start with, we solve for $\cH^1_2$-weak solutions to \eqref{eqn-main}. In the following, let $\cH^{1/2}_2(\cD^T)$ be the restriction of the trace space of $\cH^1_2(\cQ^T)$ on $\cD^T\subset\p_l\cQ^T$. Similarly, $\cH^{-1/2}_2(\cN^T)$ is the trace space of the (spatial) normal derivatives for functions in $\cH^1_2(\cQ^T)$.
\begin{theorem}
                        \label{thm2.1}
	Suppose that $\Lambda$ satisfies Assumption \ref{ass-0301-2356}, then for any  $g_\cD\in  \cH^{1/2}_2(\cD^T)$ and $g_{\cN}\in \cH^{-1/2}_2(\cN^T)$, there exists a unique solution $u\in \cH^1_{2}(\cQ^T)$ to  \eqref{eqn-main} satisfying
	\begin{equation*}
		\|u\|_{\cH^1_2(\cQ^T)} \leq C(\|g_\cD\|_{\cH^{1/2}_2(\cD^T)} + \|g_\cN\|_{\cH^{-1/2}_2(\cN^T)}),
	\end{equation*}
where $C=C(d, R_0, M, T,\text{diam}(\Omega))$.
\end{theorem}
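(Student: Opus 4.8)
The plan is to reduce \eqref{eqn-main} to a mixed problem with homogeneous Dirichlet data and a right-hand side in $\mathbb{H}^{-1}_2$, to solve the reduced problem by a penalization (Robin approximation) argument carried out on the \emph{fixed} spatial domain $\Omega$, and to deduce uniqueness from an energy identity. First I would remove the Dirichlet data: since $\cH^{1/2}_2(\cD^T)$ is by definition the restriction of the lateral trace space of $\cH^1_2(\cQ^T)$, pick an extension $G\in\cH^1_2(\cQ^T)$ with $\tr_{\cD^T}G=g_\cD$ and $\norm{G}_{\cH^1_2(\cQ^T)}\le C\norm{g_\cD}_{\cH^{1/2}_2(\cD^T)}$, the constant $C=C(d,R_0,M,T,\diam(\Omega))$ coming from a bounded extension operator on the Lipschitz base, uniformly in $t$. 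Writing the weak form of \eqref{eqn-main} as $\langle u_t,\varphi\rangle+\int_{\cQ^T}Du\cdot D\varphi=\langle g_\cN,\varphi\rangle_{\cN^T}$ for $\varphi\in W^{0,1}_2(\cQ^T)$ with $\tr_{\cD^T}\varphi=0$, and reading the right-hand side as the duality pairing of $g_\cN\in\cH^{-1/2}_2(\cN^T)$ against traces, the substitution $v:=u-G$ produces the problem: find $v\in\cH^1_2(\cQ^T)$ with $\tr_{\cD^T}v=0$ and $v|_{t=0}=-G|_{t=0}=:v_0\in L_2(\Omega)$ (so that $u=v+G$ vanishes at $t=0$) such that $\langle v_t,\varphi\rangle+\int_{\cQ^T}Dv\cdot D\varphi=\langle\mathcal L,\varphi\rangle$ for all such $\varphi$, where $\mathcal L$ is a bounded functional on $\{\varphi\in W^{0,1}_2(\cQ^T):\tr_{\cD^T}\varphi=0\}$ built linearly from $G$ and $g_\cN$, with $\norm{\mathcal L}+\norm{v_0}_{L_2(\Omega)}\le C(\norm{g_\cD}_{\cH^{1/2}_2(\cD^T)}+\norm{g_\cN}_{\cH^{-1/2}_2(\cN^T)})$.

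To solve this, for $\epsi>0$ I would consider the Robin-regularized problem, namely $v^\epsi_t-\Delta v^\epsi=0$ in $\cQ^T$, $v^\epsi|_{t=0}=v_0$, with lateral condition encoded by $\langle v^\epsi_t,\varphi\rangle+\int_{\cQ^T}Dv^\epsi\cdot D\varphi+\tfrac1\epsi\int_{\cD^T}v^\epsi\varphi=\langle\mathcal L,\varphi\rangle$ for \emph{all} $\varphi\in W^{0,1}_2(\cQ^T)$ (no boundary constraint). This lives on the fixed cylinder $(0,T)\times\Omega$ with a time-dependent but (for fixed $\epsi$) bounded, measurable Robin coefficient $\tfrac1\epsi\mathbf{1}_{\cD(t)}$ --- measurability being guaranteed by Assumption \ref{ass-0301-2356} --- so it is solvable by the classical Galerkin method in the fixed space $W^1_2(\Omega)$, producing $v^\epsi\in L_2(0,T;W^1_2(\Omega))$ with $v^\epsi_t\in L_2(0,T;W^1_2(\Omega)^*)$. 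Testing against $v^\epsi$ and absorbing (a Gronwall-type argument in $t$) yield, uniformly in $\epsi$, the bound $\norm{v^\epsi}_{W^{0,1}_2(\cQ^T)}^2+\sup_t\norm{v^\epsi(t,\cdot)}_{L_2(\Omega)}^2+\tfrac1\epsi\norm{v^\epsi}_{L_2(\cD^T)}^2\le C(\norm{\mathcal L}+\norm{v_0}_{L_2(\Omega)})^2$ with $C=C(d,R_0,M,T,\diam(\Omega))$, so in particular $\norm{v^\epsi}_{L_2(\cD^T)}\to0$. Extracting a weak limit $v^\epsi\rightharpoonup v$ in $W^{0,1}_2(\cQ^T)$ (no compactness is needed, since all terms are linear in $v^\epsi$), one finds that for $\varphi$ vanishing on $\cD^T$ the penalty term is zero, so $v$ satisfies $\langle v_t,\varphi\rangle+\int_{\cQ^T}Dv\cdot D\varphi=\langle\mathcal L,\varphi\rangle$ for all such $\varphi$; since $\varphi$ is free on $\cN^T$, this forces $v_t\in\mathbb{H}^{-1}_2(\cQ^T)$ with the built-in constraint $g_in_i=0$ on $\cN$, hence $v\in\cH^1_2(\cQ^T)$. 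The trace operator $W^{0,1}_2(\cQ^T)\to L_2(\cD^T)$ being bounded and linear, $\tr_{\cD^T}v$ is the weak limit of $\tr_{\cD^T}v^\epsi$, which converges strongly to $0$, so $\tr_{\cD^T}v=0$; testing with $\varphi(0,\cdot)\ne0$ recovers $v|_{t=0}=v_0$. Then $u:=v+G\in\cH^1_2(\cQ^T)$ solves \eqref{eqn-main}. (Alternatively, one may use Assumption \ref{ass-0301-2356} to approximate $\Lambda$ by separations that are piecewise constant in $t$ and solve cylindrical mixed problems slab by slab, passing to a weak limit.)

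For uniqueness, if $u\in\cH^1_2(\cQ^T)$ solves \eqref{eqn-main} with $g_\cD=g_\cN=0$ and $u|_{t=0}=0$, then $u$ is itself admissible as a test function (it vanishes on $\cD^T$), and for every $T'\in(0,T]$
\[
\tfrac12\bignorm{u(T',\cdot)}_{L_2(\Omega)}^2+\int_0^{T'}\!\!\int_\Omega|Du|^2\,dx\,dt=\langle u_t,u\rangle_{(0,T')}+\int_0^{T'}\!\!\int_\Omega|Du|^2\,dx\,dt=0,
\]
where $\langle u_t,u\rangle=\tfrac12\tfrac{d}{dt}\norm{u}_{L_2(\Omega)}^2$ is legitimate because $u_t\in\mathbb{H}^{-1}_2$ pairs with $u\in W^{0,1}_2$, and the lateral boundary term drops out since $g_in_i=0$ on $\cN$ in the definition of $\mathbb{H}^{-1}_2$ while $u=0$ on $\cD$. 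Hence $u\equiv0$. The quantitative estimate of the theorem then follows by collecting the bounds above: the uniform $\epsi$-estimate together with lower semicontinuity of the norm under weak convergence gives $\norm{v}_{W^{0,1}_2(\cQ^T)}\le C(\norm{\mathcal L}+\norm{v_0}_{L_2(\Omega)})$, reading $v_t$ off the limiting equation gives the matching $\mathbb{H}^{-1}_2$-bound, and adding back $G$ yields $\norm{u}_{\cH^1_2(\cQ^T)}\le C(\norm{g_\cD}_{\cH^{1/2}_2(\cD^T)}+\norm{g_\cN}_{\cH^{-1/2}_2(\cN^T)})$ with $C=C(d,R_0,M,T,\diam(\Omega))$.

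The step I expect to be the main obstacle is handling the time-dependence of the Dirichlet portion $\cD(t)$: one must set up the trace operator $\tr_{\cD^T}$ and the relevant function spaces so that everything is measurable and stable under the variation of $\cD(t)$, and verify that the limit $v$ indeed satisfies \emph{both} the Dirichlet condition on $\cD^T$ and the conormal condition on $\cN^T$ --- this is precisely where Assumption \ref{ass-0301-2356} is used (it also underlies the slab-approximation variant, where one must moreover check that both boundary conditions survive the limit $\Lambda^{(n)}\to\Lambda$). A secondary technical point, needed in both the limiting argument and the uniqueness proof, is the justification of the pairing $\langle u_t,\varphi\rangle$ and of the integration by parts in $t$ for $u_t\in\mathbb{H}^{-1}_2$ against test functions vanishing only on $\cD^T$: the constraint $g_in_i=0$ on $\cN$ built into the definition of $\mathbb{H}^{-1}_2$ is exactly what makes the unwanted lateral boundary term vanish. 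All quantitative dependence on $d$, $R_0$, $M$, $\diam(\Omega)$ enters only through the $t$-uniform extension and trace constants on $\Omega$, and the dependence on $T$ through the Gronwall absorption.
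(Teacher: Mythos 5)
Your proof is essentially correct but takes a genuinely different route from the paper. The paper's proof is considerably shorter: it reduces to $g_\cD=0$ by citing the solvability of the Dirichlet regularity problem, then for each fixed $t$ solves a \emph{stationary elliptic} mixed problem by Lax--Milgram to find $w(t,\cdot)\in W^1_{2,\cD}(\Omega)$ with $\Delta w=0$, $\partial w/\partial \vec n=g_\cN$ on $\cN(t)$, $w=0$ on $\cD(t)$, sets $f=\nabla w\in L_2(\cQ^T)$, and observes that $u$ solves the homogeneous-data problem $u_t-\Delta u=-\dv f$, $\partial u/\partial\vec n=f\cdot\vec n$, $u=0$ on $\cD^T$, for which existence and the $\cH^1_2$ bound are exactly \cite[Proposition 3.3]{CDL21}. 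The virtue of that reduction is that all the time-dependence of the separation is absorbed into the $L_2$ vector field $f$ at the elliptic level, so that the parabolic solvability result being invoked already handles the moving $\cD(t)$. Your Robin penalization plus Galerkin argument on the fixed base $\Omega$ is a self-contained alternative that does not rely on the companion paper, and your uniqueness-via-energy-identity argument is correct and of the same flavour as the one implicit in the paper. Two technical points you flag yourself are genuine but manageable: (i) one must verify that the limit $v$ has $v_t\in\mathbb{H}^{-1}_2(\cQ^T)$ in the paper's sense (a representation $v_t=D_ig_i+f$ with $g_in_i=0$ on $\cN$ distributionally); this follows from the limiting weak identity together with representations of $G_t\in\mathbb{H}^{-1}_2$ and of the $\cH^{-1/2}_2$ Neumann data, but it needs to be spelled out rather than asserted; and (ii) the compatibility of $g_\cD$ at $t=0$ (so that the extension $G$ can be taken with $G|_{t=0}=0$) is glossed over in both your and the paper's argument, with the paper deferring it to the Dirichlet solvability result. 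Your approach buys self-containedness and a uniform $\epsi$-estimate that makes the quantitative dependence transparent; the paper's buys brevity by outsourcing the parabolic solvability to \cite[Proposition 3.3]{CDL21} and the boundary-data reduction to a one-line elliptic Lax--Milgram step.
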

\begin{proof}
	By the solvability of the Dirichlet problem, without loss of generality, we may assume that $g_\cD=0$. Now for each $t\in (0,T)$, by the Lax-Milgram theorem, we can find $w(t,\cdot)\in W^1_{2,\cD}(\Omega)$ satisfying
	\begin{equation*}
		\begin{cases}
			\Delta w(t,\cdot) = 0  & \text{in }\, \Omega,\\
			\frac{\p w}{\p \vec{n}} = g_\cN  & \text{on }\, \cN(t),\\
			w = 0 & \text{on }\, \cD(t).
		\end{cases}
	\end{equation*}
	Let $f(t,\cdot)=\nabla w(t,\cdot)$ so that $f\in L_2(\cQ^T)$. Then \eqref{eqn-main} is equivalent to the following nonhomogeneous equation with ``homogeneous'' boundary condition
	\begin{equation*}		
		\begin{cases}
			u_t-\Delta u = -\dv f  & \text{in }\, \cQ^T,\\
			\frac{\p u}{\p \vec{n}} = f\cdot \vec{n}  & \text{on }\, \cN^T,\\
			u = 0 & \text{on }\, \cD^T,\\
			u = 0 & \text{on } \{t=0\},
		\end{cases}
	\end{equation*}
	the solvability of which follows from \cite[Proposition 3.3]{CDL21}.
\end{proof}

Next, using the embedding on $\p \cQ^T$ and the duality (cf. \cite[Lemma 4.13]{OB13}), we have the global $L_2$ estimate when $g_{\cD}=0$:
\begin{equation}
	\label{eq8.21}
	\|Du\|_{L_2(\cQ^T)}\le C\|g_{\cN}\|_{L_{\frac{2(d+1)}{d+2}}(\cN^T)},
\end{equation}
where $C=C(d,R_0,M,T)$.

\subsection{Reverse H\"older's inequality in \texorpdfstring{$\cQ$}{Q}} \label{sec1.2}
By using the Caccioppoli inequality and the parabolic Sobolev-Poincar\'e inequality (cf. \cite[Lemma 3.8]{CDL21}), we can prove a reverse H\"older's inequality ``in the bulk'', which is analogous to \cite[Lemma 3.19]{OB13}. Suppose that $0\in\p_l\cQ$. Consider a local problem
	\begin{equation}\label{eqn-210827-0729}
	\begin{cases}
		u_t-\Delta u = 0  & \text{in }\, \cQ_{2r},\\
		\frac{\p u}{\p \vec{n}} = g_\cN  & \text{on }\, \cN\cap\Delta_{2r},\\
		u = 0 & \text{on }\, \cD\cap\Delta_{2r},
	\end{cases}
\end{equation}
where $r\in (0,R_0/2)$.
\begin{lemma}\label{lem-210903-0918}
	Suppose that Assumptions \ref{ass-0301-2356} and \ref{ass-210809-0731} hold, and $u\in\cH^1_2$ satisfies \eqref{eqn-210827-0729}.
	\begin{enumerate}
		\item We can find some $p_0=p_0(d,M)>2$, such that
		\begin{equation}
			\label{eq6.54}
			\Big(\fint_{\cQ_r}|Du|^{p_0}\,dZ\Big)^{1/p_0}
			\le C\fint_{\cQ_{2r}}|Du|\,dZ
			+C\Big(\fint_{\Delta_{2r}}1_{\cN}
			|g_{\cN}|^{p_0(d+1)/(d+2)}\,d\sigma\Big)^{(d+2)/(p_0(d+1))},
		\end{equation}
where $C=C(d,M)$.
		\item For any $p_0< 2(m+2)/(m+1)$, if we further assume that Assumption \ref{ass-210609-0500-3} ($\theta,m$) is satisfied with a sufficiently small $\theta=\theta(d,M,p_0)$ and $g_\cN=0$, then \eqref{eq6.54} holds. In this case, the constant $C$ also depends on $m$ and $p_0$.
	\end{enumerate}
\end{lemma}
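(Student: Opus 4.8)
\emph{Part (1).} The plan is the classical Gehring-type self-improvement, transplanting the elliptic scheme of \cite[Lemma~3.19]{OB13} to the parabolic mixed setting. Fix radii $r\le\rho<\rho'\le 2r$ and a parabolic cutoff $\eta$ that is $1$ on $\cQ_\rho$, supported in $\cQ_{\rho'}$, with $|D\eta|\lesssim(\rho'-\rho)^{-1}$ and $|\p_t\eta|\lesssim(\rho'-\rho)^{-2}$. I would test the weak formulation of \eqref{eqn-210827-0729} against $\eta^2(u-c)$, choosing $c=0$ whenever $\cD\cap\Delta_{2r}\neq\emptyset$ (so the test function still vanishes on the Dirichlet part, which covers both the genuinely mixed and the purely Dirichlet local geometries) and $c$ equal to the $\cQ_{\rho'}$-average of $u$ in the remaining purely Neumann case. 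Handling the parabolic term in the usual way (integrating $\int u_t\,\eta^2(u-c)$ by parts in $t$, the time-endpoint term at $t=0$ vanishing by the zero initial condition or else removed by an extra temporal cutoff), bounding the Neumann integral $\int_{\cN\cap\Delta_{\rho'}}g_\cN\,\eta^2(u-c)\,d\sigma$ by H\"older's inequality and the parabolic boundary Sobolev embedding --- whose dual exponent is exactly $2(d+1)/(d+2)$, the same balance underlying \eqref{eq8.21} --- and then absorbing the resulting gradient and parabolic-energy terms on the left, one arrives at the Caccioppoli inequality
\[
\fint_{\cQ_\rho}|Du|^2\,dZ
\le \frac{C}{(\rho'-\rho)^2}\fint_{\cQ_{\rho'}}|u-c|^2\,dZ
+C\Big(\fint_{\Delta_{\rho'}}1_{\cN}\,|g_\cN|^{\frac{2(d+1)}{d+2}}\,d\sigma\Big)^{\frac{d+2}{d+1}},
\]
with the scale-balancing power of $\rho'$ in the surface term understood.

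Next I would feed in the parabolic Sobolev--Poincar\'e inequality \cite[Lemma~3.8]{CDL21}: in the mixed or purely Dirichlet case the corkscrew Assumption~\ref{ass-210809-0731} guarantees that $\{u=0\}$ contains a surface cube of size comparable to $\rho'$, giving the measure density needed for a Poincar\'e inequality with partial zero boundary values and $c=0$; in the purely Neumann case one uses the average. Either way, for a fixed $q_0=q_0(d)\in(1,2)$ (a parabolic Sobolev conjugate of $2$ from below),
\[
\Big(\fint_{\cQ_{\rho'}}|u-c|^2\,dZ\Big)^{1/2}\le C\rho'\Big(\fint_{\cQ_{\rho'}}|Du|^{q_0}\,dZ\Big)^{1/q_0}.
\]
Combining with the Caccioppoli inequality (taking $\rho=r$, $\rho'=2r$), and running the same argument on every parabolic subcube $\cQ_s(X)\subset\cQ_{2r}$ --- interior, purely Dirichlet, purely Neumann, or straddling $\Lambda$ --- one gets a reverse H\"older inequality from $L_{q_0}$ to $L_2$ valid at all such scales. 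Interpolating $L_{q_0}$ between $L_1$ and $L_2$ and following with a hole-filling iteration (equivalently, the self-improving property of reverse H\"older inequalities) replaces $L_{q_0}$ by $L_1$ on the right at the exponent $2$, and the Gehring--Giaquinta--Modica lemma then raises the exponent on the left to some $p_0=p_0(d,M)>2$, the inhomogeneous term automatically acquiring the exponent $p_0(d+1)/(d+2)$. This is \eqref{eq6.54}; the dependence of $p_0$ on $M$ enters only through the corkscrew/Poincar\'e constant and the Lipschitz character of $\p\Omega$.

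\emph{Part (2).} The Caccioppoli step is unchanged (and simpler, as $g_\cN=0$), but the bare Sobolev--Poincar\'e only gains a little past $2$, whereas here one must reach every $p_0<2(m+2)/(m+1)$. The plan is a perturbation argument, parallel to the homogeneous theory of \cite{CDL21} (itself the parabolic counterpart of the elliptic analysis of \cite{DL20}). Since $g_\cN=0$ the problem is homogeneous, and for $\theta$ small Assumption~\ref{ass-210609-0500-3}($\theta,m$) forces $\p\Omega$ and $\Lambda$ to be uniformly close, at every scale and in Hausdorff distance, to the model configuration: a half-space whose flat boundary is split, by the graph of a Lipschitz function of $m$ variables, into a homogeneous Dirichlet and a homogeneous Neumann part. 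For the model problem the De Giorgi--Nash--Moser estimate controls $Du$ away from $\Lambda$, while the blow-up of $Du$ at the $m$-dimensional Lipschitz-graph interface --- which is exactly what fixes the threshold $2(m+2)/(m+1)$, as in \cite{DL20} --- gives the local $L_{p_0}$ integrability and the reverse H\"older inequality for all $p_0<2(m+2)/(m+1)$. A compactness/blow-up argument then transfers this to the actual problem: if \eqref{eq6.54} with a given such $p_0$ failed for every $\theta$, one would get solutions of problems with $\theta_k\to 0$ violating it; rescaling and passing to the limit lands one on the model problem, and upgrading the convergence to a mode strong enough to carry a reverse H\"older inequality across the limit gives a contradiction. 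This fixes $\theta=\theta(d,M,p_0)$ and yields \eqref{eq6.54}; fed back into the interior Gehring iteration of part (1), it finishes the proof.

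\emph{Main obstacle.} I expect the real difficulty to be entirely in part (2): pinning down the sharp threshold $2(m+2)/(m+1)$ for the model --- i.e.\ quantifying how a merely Lipschitz $m$-variable graph degrades the naive $\delta^{-1/2}$ blow-up of $Du$ near $\Lambda$ --- and making the compactness step rigorous, in particular the compactness of solutions as $\theta_k\to 0$ and the upgrade of weak convergence to a mode that carries a reverse H\"older inequality across the limit. Part (1) is routine once the Neumann boundary term and the parabolic time derivative are handled as above; the only bookkeeping is to run the three local geometries uniformly and to keep the scale-invariant exponents consistent through the Gehring iteration.
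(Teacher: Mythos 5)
Your outline for part (a) is correct and is essentially what the paper means by ``the usual self-improving property'': a Caccioppoli inequality from testing with $\eta^2(u-c)$ (with $c=0$ when the Dirichlet part is nearby, using the corkscrew condition to furnish the measure-density hypothesis of the Sobolev--Poincar\'e inequality of \cite[Lemma~3.8]{CDL21}, and $c$ an average in the pure Neumann case), the boundary Sobolev exponent $2(d+1)/(d+2)$ for the $g_\cN$-term, and then a Gehring--Giaquinta--Modica iteration. That matches the paper's remark after the lemma and needs no further comment.

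Part (b), however, is where the proposal genuinely diverges and where there is a gap. The paper does not re-prove (b); it cites \cite[Lemma~6.1]{CDL21}, which (as is consistent with Lemmas~\ref{lem-210622-1233} and~\ref{lem-210706-0144} of the present paper) obtains the sharp range by a \emph{direct, quantitative} argument: a Whitney decomposition of $\cQ_{2r}$ away from $\Lambda$, the pure Dirichlet/Neumann interior estimates on each Whitney cube, and then a weighted integration in $\delta(X)=\dist(X,\Lambda)$, where the threshold $2(m+2)/(m+1)$ falls out of the integrability of $\delta^{-s}$ under Assumption~\ref{ass-210730-0540} (itself a consequence of Assumption~\ref{ass-210609-0500-3} via Lemma~\ref{lem-210629-1247}). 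Your compactness/blow-up plan would not close as stated: normalizing $\fint_{\cQ_{2r}}|Du_k|\,dZ=1$ and $\fint_{\cQ_r}|Du_k|^{p_0}\,dZ\to\infty$, the energy estimate gives only weak $L_2$ convergence of $Du_k$, and the $L_{p_0}$ norm is merely \emph{lower} semicontinuous under such convergence; thus the limit satisfying the model reverse H\"older gives no contradiction with $\fint_{\cQ_r}|Du_k|^{p_0}\to\infty$. This is exactly the ``upgrade of convergence'' gap you flag, but it is not a technical inconvenience to be patched --- it is why compactness-by-contradiction is not the mechanism used here. The correct route (and the one in \cite{CDL21}) is the weighted $\delta^s$-estimate near $\Lambda$ in the spirit of Lemma~\ref{lem-210622-1233}, combined with the $p_0>2$ gain from part (a); alternatively one can run a Caffarelli--Peral/Shen comparison-and-level-set argument as in Lemma~\ref{lem-210827-0402}, but in either case one needs a \emph{quantitative} comparison at each scale, not a qualitative blow-up limit.
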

Here , (a) is the usual self-improving property, of which we omit the proof. For the assertion (b), we refer the reader to \cite[Lemma 6.1]{CDL21}.
It is worth mentioning that for Lipschitz cylinder $\cQ$, the corkscrew condition on $\cD$ relative to $\Lambda$ is enough when applying the Sobolev-Poincar\'e inequality in the mixed case with $u=0$ on $\cD$.

\subsection{Local estimates on \texorpdfstring{$\p_l\cQ$}{pomega} for pure Dirichlet/Neumann problems} \label{sec1.3}
\begin{lemma}\label{lem-210216-0333-1}
	Suppose that $\Delta_2\subset\cN$. There exists a constant $\epsi=\epsi(d,M)>0$, such that for any $p\in(1,2+\epsi)$ and any solution $u\in \cH^1_p$ to \eqref{eqn-210827-0729} with $g_{\cN}\in L_p$, we have
	\begin{equation}\label{eqn-211024-0805}
		\|Du\|_{L_p(\Delta_1)}\le \|\vN_{1/2}(D u)\|_{L_p(\Delta_1)} \le C \|g_{\cN}\|_{L_p(\Delta_2)} + C\|Du\|_{L_p(\cQ_2)},
	\end{equation}
where $C=C(d,R_0,M, p)$.
\end{lemma}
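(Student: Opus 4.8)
The plan is to reduce the estimate to the known global $L_p$-solvability of the pure Neumann problem for the heat equation on a Lipschitz cylinder. The first inequality in \eqref{eqn-211024-0805} is immediate: since $u$ is a solution, the nontangential limit of $Du$ exists for $\sigma$-a.e.\ $X\in\Delta_1$ and equals the boundary value, and the points of $\Gamma(X)$ approaching $X$ eventually lie in the truncated cone $\Gamma_{1/2}(X)$, so $|Du(X)|\le\vN_{1/2}(Du)(X)$ for a.e.\ $X\in\Delta_1$.

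For the second inequality, since $\Delta_2\subset\cN$, near the origin $\p_l\cQ$ is purely Neumann and Lipschitz, so $\Omega\cap B_{7/4}$ extends to a bounded Lipschitz domain $\widetilde\Omega$ agreeing with $\Omega$ on $B_{7/4}$ and with Lipschitz character controlled by $M$; set $\widetilde\cQ=(-T_0,\infty)\times\widetilde\Omega$ with $T_0>4$ fixed. Let $w$ solve the pure Neumann problem on $\widetilde\cQ$ with lateral data $g_\cN\,1_{\Delta_{3/2}}$ (extended by zero to $\p_l\widetilde\cQ$) and zero data at $t=-T_0$. By the $L_p$-solvability of the heat Neumann problem on Lipschitz cylinders \cite{B90}, valid for $p\in[1,2+\epsi)$ with $\epsi=\epsi(d,M)>0$,
\begin{equation*}
\|\vN(Dw)\|_{L_p(\p_l\widetilde\cQ)}\le C\|g_\cN\|_{L_p(\Delta_2)},
\end{equation*}
and, foliating $\widetilde\cQ$ by distance to $\p_l\widetilde\cQ$, also $\|Dw\|_{L_p(\widetilde\cQ)}\le C\|\vN(Dw)\|_{L_p(\p_l\widetilde\cQ)}\le C\|g_\cN\|_{L_p(\Delta_2)}$. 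Put $h:=u-w$. On $\cQ_{3/2}=\widetilde\cQ\cap Q_{3/2}$ both $u$ and $w$ solve the homogeneous heat equation and, since $\p_l\cQ$ and $\p_l\widetilde\cQ$ coincide on $Q_{7/4}$, $\tfrac{\p h}{\p\vec{n}}=g_\cN-g_\cN\,1_{\Delta_{3/2}}=0$ on $\Delta_{3/2}$; thus $h$ solves the homogeneous Neumann problem in $\cQ_{3/2}$. As $\vN_{1/2}(Du)\le\vN(Dw)+\vN_{1/2}(Dh)$ on $\Delta_1$ and $\|Dh\|_{L_p(\cQ_{3/2})}\le\|Du\|_{L_p(\cQ_2)}+C\|g_\cN\|_{L_p(\Delta_2)}$, everything reduces to the homogeneous local bound
\begin{equation}\label{eqn-prop-1022}
\|\vN_{1/2}(Dh)\|_{L_p(\Delta_1)}\le C\|Dh\|_{L_p(\cQ_{3/2})}\quad\text{whenever $h$ solves the homogeneous Neumann problem in }\cQ_{3/2}.
\end{equation}

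To prove \eqref{eqn-prop-1022} I would split the truncated cone at a fixed depth $c_0>0$. For $Y\in\Gamma_{1/2}(X)$ with $\dist(Y,\p_l\cQ)\ge c_0$, the interior estimate for caloric functions gives $|Dh(Y)|\le C(d,c_0)\|Dh\|_{L_p(\cQ_{3/2})}$, handling this part uniformly in $X\in\Delta_1$. For $Y\in\Gamma_{1/2}(X)$ with $\rho:=\dist(Y,\p_l\cQ)<c_0$, choose the nearest boundary point $Z_0\in\Delta_{5/4}$; combining the interior estimate on $Q_{c\rho}(Y)\subset\cQ$ with the bulk reverse H\"older inequality of Lemma \ref{lem-210903-0918}(a) at $Z_0$ (with $g_\cN=0$) yields $|Dh(Y)|\le C\big(\fint_{\cQ_{C\rho}(Z_0)}|Dh|^{p_0}\big)^{1/p_0}$ for the exponent $p_0=p_0(d,M)>2$ of that lemma, hence a bound of $\vN_{1/2}^{\mathrm{near}}(Dh)(X)$ by a maximal-type average of $|Dh|^{p_0}$ over solid half-cubes centered at $X$. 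Converting this into the surface bound \eqref{eqn-prop-1022} for $p\in(1,2+\epsi)$, $\epsi:=p_0-2$, is then carried out by a real-variable / good-$\lambda$ argument combined with the $L_2$ estimate, following the scheme of \cite{B90} and, in the present setting, of \cite{CDL21, OB13}.

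The main obstacle is precisely this last passage — upgrading the interior reverse H\"older inequality of Lemma \ref{lem-210903-0918} to a nontangential-maximal-function estimate on the lateral boundary. A crude estimate of $\vN_{1/2}(Dh)$ by the Hardy--Littlewood maximal function of $Dh$ restricted to $\p_l\cQ$ does not work (as a thin-layer example near $\p_l\cQ$ shows), so one must genuinely exploit the self-improving property to run a Calder\'on--Zygmund decomposition on surface cubes; this is also where the endpoint exponent $2+\epsi$, tied to $p_0(d,M)$ — equivalently, to the range in Brown's layer-potential theory \cite{B90} — is forced. A secondary, routine point is to carry out the Lipschitz extension $\Omega\cap B_{7/4}\rightsquigarrow\widetilde\Omega$ and the time normalization so that all constants depend only on $d,R_0,M,\diam(\Omega)$, and $p$.
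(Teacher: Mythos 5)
Your reduction to the homogeneous problem is sound, but you have explicitly left unproved the central estimate \eqref{eqn-prop-1022}, namely that $\|\vN_{1/2}(Dh)\|_{L_p(\Delta_1)}\le C\|Dh\|_{L_p(\cQ_{3/2})}$ for a caloric $h$ with vanishing Neumann data on $\Delta_{3/2}$, and this is exactly where the whole weight of the lemma sits. Your sketch of how to prove it — combine the interior estimate with the bulk reverse H\"older inequality of Lemma~\ref{lem-210903-0918}(a) and run a good-$\lambda$ argument — does not close: the reverse H\"older inequality controls \emph{solid} $L_{p_0}$ averages of $Dh$ by $L_1$ averages, which is a different object from the \emph{nontangential maximal function} on the lateral boundary. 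Passing from one to the other (even after a Calder\'on--Zygmund decomposition on surface cubes) still requires, at the bottom, a boundary regularity estimate for the Neumann problem — which is precisely Brown's $L_2$ (or $L_p$) NTMF theorem \cite{B90}. So following your sketch would force you to re-import Brown's theory into the proof of \eqref{eqn-prop-1022}, at which point you might as well have applied it directly; the detour via $h=u-w$ buys nothing and leaves the hard step as an assertion.

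The paper's proof avoids manufacturing a local homogeneous problem. It instead takes a space-time cutoff $\eta$ supported in $Q_2$ with $\eta\equiv 1$ on $Q_{7/4}$, so that $u\eta$ is caloric up to the interior source $f=(u\partial_t\eta-u\Delta\eta-2Du\cdot D\eta)1_{\cQ_2}\in L_p$, then absorbs $f$ by solving $(\partial_t-\Delta)w=f$ on an enlarged \emph{smooth} cylinder $\widetilde Q$ with $w\in W^{1,2}_p(\widetilde Q)$, $\|w\|_{W^{1,2}_p}\le C\|Du\|_{L_p(\cQ_2)}$, and $\|Dw\|_{L_\infty(Q_{3/2})}\le C\|Du\|_{L_p(\cQ_2)}$ by interior regularity. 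The difference $u\eta-w$ is then caloric in $\cQ_2$ with zero initial data and Neumann data $(g_\cN\eta+u\,\partial\eta/\partial\vec n)1_{\Delta_2}-\partial w/\partial\vec n$, which is controlled in $L_p(\p_l\cQ_2)$ via the trace theorem; Brown's $L_p$ initial-Neumann NTMF estimate (valid for $1<p<2+\epsi$, $\epsi=\epsi(d,M)$) applied to $u\eta-w$ on $\cQ_2$ then finishes the proof. Note also a secondary issue in your set-up: your claim $\|Dw\|_{L_p(\widetilde\cQ)}\le C\|\vN(Dw)\|_{L_p(\p_l\widetilde\cQ)}$ by foliation is not automatic on an infinite-time cylinder, whereas the paper sidesteps this entirely by estimating $\|w\|_{W^{1,2}_p}$ directly from the $L_p$ solvability on a smooth domain.
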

\begin{proof}
	We focus on estimating $\vec{N}_{1/2}(Du)$ as the first inequality in \eqref{eqn-211024-0805} is trivial. Without lost of the generality, we may assume $(u)_{\cQ_2} =0$. Now, we take a cut-off function $\eta=\eta(t,x)\in C^\infty_c(\cQ_2)$ with $\eta=1$ in $\cQ_{7/4}$. Then $u\eta$ satisfies an inhomogeneous heat equation
	\begin{equation*}
		\begin{cases}
			(\p_t-\Delta)(u\eta) = f\quad\text{in}\,\,\cQ_2,\\
			\p(u\eta)/\p \vec{n} = (g_\cN\eta + u\p \eta/\p\vec{n})1_{\Delta_2}\quad\text{on}\,\,\p_l\cQ_2,\\
			u\eta=0\quad\text{for}\,\,t=-4,
		\end{cases}
	\end{equation*}
where
	\begin{equation}\label{eqn-211024-0934}
		f:=(u\p_t\eta -u\Delta\eta - 2 Du\cdot D\eta)1_{\cQ_2} \in L_p.
	\end{equation}

To deal with the source terms,
on an enlarged smooth cylinder
\begin{equation*}
	\widetilde{Q}:=(-4,4)\times B_{2\sqrt{d}}\supset Q_2,
\end{equation*}
we solve the following problem
	\begin{equation}\label{eqn-211024-0935}
	\begin{cases}
		(\p_t-\Delta)w = f\quad\text{in}\,\,\widetilde{Q},\\
		 w = 0\quad\text{on}\,\,\p_p \widetilde{Q}.\\
	\end{cases}
	\end{equation}
	Since $f\in L_p(\widetilde{Q})$ and $\widetilde{Q}$ has a smooth base, such solution $w\in W^{1,2}_p(\widetilde{Q})$ exists, and satisfies
		\begin{equation}\label{eqn-211010-1130}
			\|w\|_{W^{1,2}_p(\widetilde{Q})}\leq \|f\|_{L_p(\widetilde{Q})}\leq C\|u\|_{L_p(\cQ_2)} + C\|Du\|_{L_p(\cQ_2)} \leq C\|Du\|_{L_p(\cQ_2)}.
		\end{equation}
	Here in the last inequality, we also used the parabolic Poincar\'e type inequality for functions with space-time zero mean, c.f., \cite[Lemma~3.8]{CDL21} or \cite[Lemma~3.1]{MR2304157}.
Furthermore, since $w$ is caloric in $Q_{7/4}$, by the interior regularity for caloric functions and \eqref{eqn-211010-1130},
	\begin{equation}\label{eqn-211010-1136}
		\|Dw\|_{L_\infty(Q_{3/2})} \leq C\|Dw\|_{L_p(Q_{7/4})} \leq C\|Du\|_{L_p(\cQ_2)}.
\end{equation}
Note that $u\eta-w$ is caloric in $\cQ_2$ and $u\eta-w = 0$ when $t=-4$.
Now by the triangle inequality, the fact
	\begin{equation*}
		\Gamma_{1/2}(X)\subset \cQ_{3/2}\quad\forall X\in\Delta_1,
	\end{equation*}
and the $L_p$ estimate for the initial-Neumann problem in \cite[Theorem~5.22,~5.25]{B90}, we have
	\begin{equation}\label{eqn-211010-1133}
		\begin{split}
		\|\vec{N}_{1/2}(Du)\|_{L_p(\Delta_1)}
		&\leq
		\|\vec{N}_{1/2}(D(u\eta -w))\|_{L_p(\Delta_1)} + \|\vec{N}_{1/2}(Dw)\|_{L_p(\Delta_1)}
		\\&\leq
		\|\vec{N}_{1/2}(D(u\eta -w))\|_{L_p(\Delta_1)} + C\|Dw\|_{L_\infty(Q_{3/2})}
		\\\quad&\leq
		C\big\|\frac{\p(u\eta -w)}{\p\vec{n}}\big\|_{L_p(\p_l\cQ_2)} + C\|Dw\|_{L_\infty(Q_{3/2})}.
	\end{split}
	\end{equation}
Noting that
	\begin{equation*}
		\p(u\eta)/\p \vec{n} = (g_\cN\eta + u\p \eta/\p\vec{n})1_{\Delta_2}\quad\text{on}\,\,\p_l\cQ_2,
	\end{equation*}
by the triangle inequality, we have
\begin{align}
		\|\frac{\p(u\eta -w)}{\p\vec{n}}\big\|_{L_p(\p_l\cQ_2)}
		&\leq
		C\|g_\cN\|_{L_p(\Delta_2)} + C\|u\|_{L_p(\Delta_2)} + \|Dw\|_{L_p(\p_l\cQ_2)}\nonumber
		\\&\leq
		C\|g_\cN\|_{L_p(\Delta_2)} + C\|u\|_{W^{0,1}_p(\cQ_2)} + \|w\|_{W^{0,2}_p(\cQ_2)}.\label{eqn-211024-0901}
\end{align}
Here, in \eqref{eqn-211024-0901}, we used the following inequality which can be derived from the trace theorem (in $x$): For any $h\in W^{0,1}_p(\cQ_2)$,
\begin{equation*}
	\|h\|_{L_p(\p_l\cQ_2)}^p = \int_{-4}^4 \|h(t,\cdot)\|_{L_p(\p\Omega_2)}^p\,dt \leq C\int_{-4}^4  \|h(t,\cdot)\|_{W^{0,1}_p(\Omega_2)}^p\,dt = C\|h\|_{W^{0,1}_p(\cQ_2)}^p.
\end{equation*}
Substituting \eqref{eqn-211024-0901} back to \eqref{eqn-211010-1133}, then using the Poincar\'e type inequality, \eqref{eqn-211010-1130}, and \eqref{eqn-211010-1136}, we have
\begin{equation*}
	\begin{split}
		\|\vec{N}_{1/2}(Du)\|_{L_p(\Delta_1)}
		&\leq
		C\|g_\cN\|_{L_p(\Delta_2)} + C\|u\|_{W^{0,1}_p(\cQ_2)} + \|w\|_{W^{0,2}_p(\cQ_2)} + \|Dw\|_{L_\infty(Q_{3/2})}
		\\&\leq C\|g_\cN\|_{L_p(\Delta_2)} + C\|Du\|_{L_p(\cQ_2)}.
	\end{split}
\end{equation*}
Hence, the lemma is proved.
\end{proof}

For the Dirichlet problem, some more work is needed to bound the half derivative in time.
\begin{lemma}\label{lem-210216-0333-2}
Suppose that $\Delta_2\subset\cD$. There exists a constant $\epsi=\epsi(d,M)>0$, such that for any $p\in(1,2+\epsi)$ and solution $u\in \cH^1_p$ to \eqref{eqn-210827-0729}, we have,
	\begin{equation*}
		\|Du\|_{L_p(\Delta_1)}\le \|\vN_{1/2}(D u)\|_{L_p(\Delta_1)} \le C\|Du\|_{L_p(\cQ_2)},
	\end{equation*}
where $C=C(d,R_0,M, p)$.
\end{lemma}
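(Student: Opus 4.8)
The plan is to mimic the structure of the proof of Lemma~\ref{lem-210216-0333-1}, replacing the $L_p$ estimate for the initial-Neumann problem with the corresponding estimate for the initial-Dirichlet problem from \cite{B90}. The extra difficulty, as the statement hints, is that for the Dirichlet problem one must control a half time derivative of the boundary trace, so one cannot simply bound the right-hand side by $\|Du\|_{L_p(\cQ_2)}$ in an entirely pointwise fashion; instead the datum produced by the cutoff must be estimated in the space $\cL^1_p$ rather than merely in $L_p$.

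First I would set up the same localization: assume $(u)_{\cQ_2}=0$, pick $\eta\in C_c^\infty(\cQ_2)$ with $\eta\equiv 1$ on $\cQ_{7/4}$, and observe that $u\eta$ solves the inhomogeneous heat equation with source $f=(u\p_t\eta-u\Delta\eta-2Du\cdot D\eta)1_{\cQ_2}\in L_p$, zero initial data, and Dirichlet datum $(u\eta)\vert_{\p_l\cQ_2}=0$ on $\cD\cap\Delta_2$ but a nonzero contribution from the part of $\p_l\cQ_2$ lying inside $\cQ$ (i.e., where $\eta$ is cut off). Then I would solve the auxiliary problem \eqref{eqn-211024-0935} for $w\in W^{1,2}_p(\widetilde Q)$ on the enlarged smooth cylinder $\widetilde Q$ with source $f$ and zero parabolic-boundary data, getting $\|w\|_{W^{1,2}_p(\widetilde Q)}\le C\|Du\|_{L_p(\cQ_2)}$ by the Poincaré-type inequality exactly as in \eqref{eqn-211010-1130}, and the interior bound $\|Dw\|_{L_\infty(Q_{3/2})}\le C\|Du\|_{L_p(\cQ_2)}$ as in \eqref{eqn-211010-1136}. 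Now $u\eta-w$ is caloric in $\cQ_2$, vanishes at $t=-4$, and on $\p_l\cQ_2$ it equals $u\eta-w$, which on $\cD\cap\Delta_2$ reduces to $-w$; off the support of $\eta$ the function $u\eta$ vanishes, and there is also the glued region. Applying the $L_p$ estimate for the initial-Dirichlet problem from \cite[Theorem~5.22,~5.25]{B90} (the analog of what was used in \eqref{eqn-211010-1133}) together with $\Gamma_{1/2}(X)\subset\cQ_{3/2}$ for $X\in\Delta_1$, I obtain
\begin{equation*}
\|\vN_{1/2}(D(u\eta-w))\|_{L_p(\Delta_1)}\le C\|u\eta-w\|_{\cL^1_p(\p_l\cQ_2)},
\end{equation*}
and hence $\|\vN_{1/2}(Du)\|_{L_p(\Delta_1)}\le C\|u\eta-w\|_{\cL^1_p(\p_l\cQ_2)}+C\|Dw\|_{L_\infty(Q_{3/2})}$.

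The main obstacle is the estimate of the Dirichlet datum $u\eta-w$ in $\cL^1_p(\p_l\cQ_2)$, since this norm involves the half time derivative: I would bound it by $\|u\eta\|_{\cL^1_p}+\|w\|_{\cL^1_p}$, and control each by a trace inequality mapping $W^{1,1}_p(\cQ_2)$ (functions with one full time derivative and one spatial derivative) into $\cL^1_p(\p_l\cQ_2)$, which is the parabolic analog of the spatial trace theorem used in Lemma~\ref{lem-210216-0333-1}; this gives $\|u\eta\|_{\cL^1_p(\p_l\cQ_2)}\le C\|u\eta\|_{W^{1,1}_p(\cQ_2)}$ and similarly for $w$. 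The catch is that this requires controlling $\p_t(u\eta)$, and $u_t$ is only in $\mathbb{H}^{-1}_p$, not $L_p$ — so I would instead route through $w$: note $u\eta-w$ is caloric, so $\p_t(u\eta-w)=\Delta(u\eta-w)$ in $\cQ_{7/4}$, and then invoke interior parabolic regularity to express the $\cL^1_p$ boundary norm of the caloric function $u\eta-w$ purely in terms of its $L_p(\cQ_2)$ norm plus the contribution of $w$ near $\p_l\cQ_2$; the latter is handled by \eqref{eqn-211010-1130}. Concretely, one writes $u\eta-w=(u-w)+ (\eta-1)u$, with $(\eta-1)u$ supported away from $\cQ_{7/4}$ hence not seen by $\Gamma_{1/2}(X)$ for $X\in\Delta_1$, reducing everything to the caloric function $u-w$ near the relevant part of the boundary, whose $\cL^1_p$ trace norm is dominated by $\|u-w\|_{L_p(\cQ_{7/4})}\le C\|Du\|_{L_p(\cQ_2)}+\|w\|_{L_p(\cQ_2)}\le C\|Du\|_{L_p(\cQ_2)}$ using Poincaré and \eqref{eqn-211010-1130}. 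Assembling these, together with $\|Dw\|_{L_\infty(Q_{3/2})}\le C\|Du\|_{L_p(\cQ_2)}$, yields $\|\vN_{1/2}(Du)\|_{L_p(\Delta_1)}\le C\|Du\|_{L_p(\cQ_2)}$, proving the lemma.
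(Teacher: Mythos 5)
There is a real gap. The key simplification in the paper's argument, which you miss, is that $u\eta-w$ restricted to $\p_l\cQ_2$ is just $-w$: since $\Delta_2\subset\cD$, $u=0$ on all of $\Delta_2$, and on the remaining part of $\p_l\cQ_2$ (the portion of $\p Q_2$ lying inside $\cQ$) the compactly supported cutoff $\eta$ vanishes, so $u\eta\equiv 0$ on $\p_l\cQ_2$. Hence $\|u\eta-w\|_{\cL^1_p(\p_l\cQ_2)}=\|w\|_{\cL^1_p(\p_l\cQ_2)}$, and the entire difficulty you flag about controlling $\|u\eta\|_{\cL^1_p}$ (with $u_t$ only in $\mathbb{H}^{-1}_p$) simply does not arise. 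You seem to half-notice this (``off the support of $\eta$ the function $u\eta$ vanishes'') but then assert there is ``also the glued region'' and proceed to bound $\|u\eta\|_{\cL^1_p}+\|w\|_{\cL^1_p}$; there is no such region, and the first term is zero.

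Your workaround for the (nonexistent) $u\eta$ term is also not sound. The decomposition $u\eta-w=(u-w)+(\eta-1)u$ does not help: the Brown-type initial-Dirichlet regularity estimate bounds $\vec{N}(D(u\eta-w))$ by $\|u\eta-w\|_{\cL^1_p(\p_l\cQ_2)}$, a \emph{global} boundary norm on all of $\p_l\cQ_2$, so the fact that $(\eta-1)u$ is invisible to the truncated cones over $\Delta_1$ is irrelevant to that norm. Moreover, the claim that the $\cL^1_p(\p_l\cQ_2)$ trace norm of the caloric function $u-w$ ``is dominated by $\|u-w\|_{L_p(\cQ_{7/4})}$'' is false: $\cL^1_p$ involves a full tangential spatial derivative and a half time derivative, and one cannot reach it from an $L_p$ bulk bound alone. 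The paper gets around this precisely because the boundary datum is $-w$ and $w\in W^{1,2}_p(\widetilde{Q})$; it uses the trace embedding $W^{1,2}_p(\cQ_2)\hookrightarrow B^p_{1+\epsi'+1/p,\mathrm{par}}(\cQ_2)\hookrightarrow \cL^1_p(\p_l\cQ_2)$ for small $\epsi'>0$, see \eqref{eqn-211013-0805}, and that chain relies essentially on the $W^{1,2}_p$ control of $w$ from \eqref{eqn-211010-1130}. (A minor point: the Dirichlet regularity estimate used here is \cite[Theorem~5.23,~5.25]{B90}, not 5.22.)
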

\begin{proof}
	Again, it suffices to estimate $\vec{N}_{1/2}(Du)$. Taking the same cutoff function $\eta$ with Lemma \ref{lem-210216-0333-1}, we have
	\begin{equation*}
	\begin{cases}
		(\p_t-\Delta)(u\eta) = f\quad\text{in}\,\,\cQ_2,\\
		u = 0\quad\text{on}\,\,\p_l\cQ_2,\\
		u\eta=0\quad\text{for}\,\,t=-4,
	\end{cases}
	\end{equation*}
where $f$ is given in \eqref{eqn-211024-0934}. Still, we solve \eqref{eqn-211024-0935} for $w\in W^{1,2}_p(\widetilde{Q})$, satisfying \eqref{eqn-211010-1130} - \eqref{eqn-211010-1136}. In the proof of \eqref{eqn-211010-1130}, we need to apply the parabolic poincar\'e type inequality for functions with zero boundary value.

Noting that
\begin{equation*}
	(\p_t-\Delta)(u\eta-w) = 0\,\,\text{in}\,\,\cQ_2,\quad u\eta-w = -w\,\,\text{on}\,\,\p_l\cQ_2,
\end{equation*}
and
\begin{equation*}
	u\eta=w=0\quad\text{on}\,\,\{t=-4\},
\end{equation*}
by the triangle inequality and the $\cL^1_p$ estimate for the initial-Dirichlet regularity problem in \cite[Theorem~5.23,~5.25]{B90}, we have
	\begin{equation}\label{eqn-211013-0806}
		\begin{split}
			\|\vec{N}_{1/2}(Du)\|_{L_p(\Delta_1)}
			&\leq
			\|\vec{N}_{1/2}(D(u\eta -w))\|_{L_p(\Delta_1)} + C\|Dw\|_{L_\infty(Q_{3/2})}
			\\&\leq
			C\|u\eta -w\|_{\cL^1_p(\p_l\cQ_2)} + C\|Dw\|_{L_\infty(Q_{3/2})}
			\\&=
			C\|w\|_{\cL^1_p(\p_l\cQ_2)} + C\|Dw\|_{L_\infty(Q_{3/2})}.
		\end{split}
\end{equation}
Now, fixing any $\epsi'\in (0,1-1/p)$, by the trace theorem, we have
\begin{equation}\label{eqn-211013-0805}
	\|w\|_{\cL^1_p(\p_l\cQ_2)} \leq C \|w\|_{B^p_{1+\epsi', par}(\p_l\cQ_2)} \leq C \|w\|_{B^p_{1+\epsi'+1/p, par}(\cQ_2)} \leq C \|w\|_{W^{1,2}_p(\cQ_2)}.
\end{equation}
Here $B^p_{\alpha, par}((S,T)\times\omega)$ are the parabolic Besov spaces with zero initial values. A straightforward definition is  through the Littlewood-Paley decomposition: Consider a usual cut-off $\varphi_0$ with $\supp(\varphi_0)\subset Q_2$ and $\varphi_0=1$ on $Q_1$. Then, let
\begin{equation*}
	\varphi_j(x,t) = \varphi_0(2^{-j}x, 2^{-2j}t) - \varphi_0(2^{-j+1}x, 2^{-2j+2}t).
\end{equation*}
From this, we define
\begin{equation*}
	B^p_{\alpha,par}(\bR\times\bR^d) = \Big\{f\in\mathcal{S}'(\bR\times\bR^d): \big(\sum_j 2^{j\alpha p}\|\mathcal{F}^{-1}(\varphi_j\mathcal{F}f)\|_{L_p(\bR\times\bR^d)}\big)^{1/p}\Big\},
\end{equation*}
where $\mathcal{F}$ is the Fourier transform in $(t,x)$. Finally, we can define $B^p_{\alpha, par}((S,T)\times\omega)$ by requiring $\supp(f)\cap\{t<S\}=\emptyset$, taking the extension in \eqref{eqn-211024-0621}, and localization. See for example, \cite[pp.~827-828]{MR2280778} and \cite[pp.~2000-2001]{MR3762093}.

Combining \eqref{eqn-211013-0806}, \eqref{eqn-211013-0805}, \eqref{eqn-211010-1130}, and \eqref{eqn-211010-1136}, the lemma is proved.
\end{proof}

\subsection{Weighted estimate and reverse H\"older inequality at the boundary}\label{sec-211031-1136}
Combining the above two lemmas, we have the following weighted estimate near $\Lambda$.
\begin{lemma}\label{lem-210622-1233}
Suppose that $r< R_0/2$, $A\ge 1$, and $u\in \cH^1_2$ satisfies \eqref{eqn-210827-0729}. Then for any $s\in \bR$ and any surface cube $\Delta_r$ with $\operatorname{dist}(\Delta_r,\Lambda)\leq Ar$,
	\begin{equation*}
		\int_{\Delta_r}|Du(Z)|^2\delta(Z)^{1-s}\,d\sigma
		\le C\int_{\Delta_{2r}}1_{\cN}|g_{\cN}|^2\delta(Z)^{1-s}\,d\sigma
		+C \int_{\cQ_{2r}}|Du(Z)|^2\delta(Z)^{-s}\,dZ,
	\end{equation*}
where $C=C(d,M, A, s)$.
	Here, the case $+\infty\leq +\infty$ is allowed.
\end{lemma}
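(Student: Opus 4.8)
The statement is a weighted Caccioppoli-type estimate on the lateral boundary, interpolating between the pure-Dirichlet case (Lemma~\ref{lem-210216-0333-2}) and the pure-Neumann case (Lemma~\ref{lem-210216-0333-1}). The plan is to reduce the weighted estimate on $\Delta_r$ to a sum of \emph{unweighted} estimates on a Whitney-type decomposition of $\Delta_{2r}$ relative to $\Lambda$, then apply the two lemmas from Section~\ref{sec1.3} on each Whitney piece, and finally reassemble using the comparability of $\delta$ on each piece together with Assumption~\ref{ass-210730-0540} (or rather just the elementary covering/overlap count). Since $\operatorname{dist}(\Delta_r,\Lambda)\le Ar$ and $r<R_0/2$, the cube $\Delta_{2r}$ is contained in a bounded number of $R_0$-patches, so after a partition of unity we may work in a single coordinate chart.

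First I would fix a Whitney decomposition $\{\Delta^{(i)}\}$ of $\Delta_{2r}\setminus\Lambda$ into surface cubes with $\operatorname{diam}(\Delta^{(i)})\approx\operatorname{dist}(\Delta^{(i)},\Lambda)=:d_i$, with bounded overlap of the doubled cubes $2\Delta^{(i)}$, and such that $2\Delta^{(i)}\subset\Delta_{2r}$ still lies on one side: either $2\Delta^{(i)}\subset\cN$ or $2\Delta^{(i)}\subset\cD$ (this is where one uses that $\cD$ and $\cN$ are separated by $\Lambda$ and that Whitney cubes away from $\Lambda$ do not straddle it, after shrinking the cubes by a fixed factor). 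On each $\Delta^{(i)}$ the weight $\delta$ is comparable to $d_i$, so
\[
\int_{\Delta^{(i)}}|Du|^2\delta^{1-s}\,d\sigma\approx d_i^{1-s}\int_{\Delta^{(i)}}|Du|^2\,d\sigma .
\]
On Neumann cubes I apply (the scaled, translated version of) Lemma~\ref{lem-210216-0333-1}, and on Dirichlet cubes Lemma~\ref{lem-210216-0333-2}, with $p=2$, on the parabolic cube $\cQ$ of comparable size sitting over $2\Delta^{(i)}$; note these lemmas require the hypotheses of \eqref{eqn-210827-0729} only locally, which hold since $u$ solves \eqref{eqn-210827-0729} on all of $\cQ_{2r}$. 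This gives, after rescaling the $L_2(\Delta)$ and $L_2(\cQ)$ norms to account for the size $\approx d_i$ of the $i$-th cube,
\[
d_i^{1-s}\!\int_{\Delta^{(i)}}\!\!|Du|^2\,d\sigma
\le C\,d_i^{1-s}\!\int_{2\Delta^{(i)}\cap\cN}\!\!|g_\cN|^2\,d\sigma
+C\,d_i^{-s}\!\int_{\cQ^{(i)}}\!\!|Du|^2\,dZ,
\]
where $\cQ^{(i)}\subset\cQ_{2r}$ is the corresponding bulk region and in the Dirichlet case the first term is absent. Re-inserting $\delta\approx d_i$ inside the integrals on the right turns these into $\int_{2\Delta^{(i)}\cap\cN}|g_\cN|^2\delta^{1-s}$ and $\int_{\cQ^{(i)}}|Du|^2\delta^{-s}$.

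Summing over $i$ and using the bounded overlap of $\{2\Delta^{(i)}\}$ and of $\{\cQ^{(i)}\}$ yields exactly the claimed inequality, with $C=C(d,M,A,s)$ — the dependence on $A$ and $r$ entering only through the (scale-invariant) number of $R_0$-patches meeting $\Delta_{2r}$, which is controlled by $A$ since $\operatorname{dist}(\Delta_r,\Lambda)\le Ar$. The $+\infty\le+\infty$ remark simply covers the case where the right-hand side diverges. The main obstacle is the geometric bookkeeping: one must ensure that each (slightly enlarged) Whitney surface cube, together with the parabolic cube over it, stays within $\Delta_{2r}$ respectively $\cQ_{2r}$ and lies entirely in $\cD$ or entirely in $\cN$, and that the two auxiliary lemmas can be applied at the correct scale $d_i$ with constants independent of $i$ — the latter is immediate by parabolic scaling since those constants in Lemmas~\ref{lem-210216-0333-1} and~\ref{lem-210216-0333-2} depend only on $d,M,p$ (and, harmlessly, $R_0$, which only imposes $d_i<R_0$, automatically true here). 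A minor point to check is that the exponent $s\in\bR$ is arbitrary: since on each Whitney piece the weight is a fixed positive constant $\approx d_i$, no integrability restriction on $s$ is needed for this step — Assumption~\ref{ass-210730-0540} is not invoked here, only later when such weighted estimates are summed against the weight itself.
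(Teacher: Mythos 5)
Your proposal coincides with the paper's (omitted) proof: the paper states exactly that one takes a Whitney decomposition of $\Delta_r\setminus\Lambda$ (the text has a typo ``$\Gamma$'' for ``$\Lambda$'') and applies Lemma~\ref{lem-210216-0333-1} or Lemma~\ref{lem-210216-0333-2} with $p=2$ on each Whitney cube, referring to \cite[Lemma~4.9]{OB13} for details. Your write-up fills in the same bookkeeping the paper leaves to the reader, and you are also right that Assumption~\ref{ass-210730-0540} plays no role here since the weight $\delta$ is frozen to a constant on each Whitney piece.
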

The proof is by taking the Whitney decomposition of $\Delta_r\setminus \Gamma$ and applying Lemma \ref{lem-210216-0333-1} or \ref{lem-210216-0333-2} with $p=2$ on each Whitney cube. This is very similar to that of \cite[Lemma 4.9]{OB13} and thus omitted.

\begin{lemma}\label{lem-210706-0144}
	Let $\Delta_r$ be a surface cube as in Lemma \ref{lem-210622-1233}, $g_\cN\in L_\infty(\Delta_{2r})$, $p\in(1, p_0/2)$, where $p_0$ comes from Lemma \ref{lem-210903-0918} (a), and $u\in \cH^1_2$ satisfy \eqref{eqn-210827-0729}. Then there exists a sufficiently small $\epsi_0>0$ depending on $p_0$ and $p$ such that under Assumption \ref{ass-210730-0540} ($\epsi_0$), we have
	\begin{equation*}
			\big(\fint_{\Delta_r}|Du|^{p}\big)^{1/p}
			\leq
			\Big(\fint_{\Delta_r}|\vec{N}_{r/2}Du|^{p}\Big)^{1/p} \leq C \big(\fint_{\cQ_{2r}} |Du|^2\big)^{1/2} +C\|g_\cN\|_{L_\infty(\Delta_{2r})},
	\end{equation*}
where $C=C(d,M, p)$.
If furthermore $g_\cN=0$, we can take any $p\in(1,(m+2)/(m+1))$ provided that Assumption \ref{ass-210609-0500-3} ($\theta,m$) holds for a sufficiently small $\theta=\theta(d,M,p)$.
\end{lemma}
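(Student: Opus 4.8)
The plan is to deduce Lemma~\ref{lem-210706-0144} from the bulk reverse H\"older inequality of Lemma~\ref{lem-210903-0918} together with the flat-boundary estimates of Lemmas~\ref{lem-210216-0333-1} and~\ref{lem-210216-0333-2}, using the weighted estimate of Lemma~\ref{lem-210622-1233} to pass from the lateral boundary to the interior. As always, the first inequality $(\fint_{\Delta_r}|Du|^p)^{1/p}\le(\fint_{\Delta_r}|\vN_{r/2}Du|^p)^{1/p}$ is trivial, so the content is the bound on $\vN_{r/2}(Du)$. After rescaling we may take $r=1$, and by splitting $\Delta_1$ into the part near $\Lambda$ and the part away from $\Lambda$ we reduce to two regimes: points $X\in\Delta_1$ with $\delta(X)$ comparable to or larger than a fixed fraction of the distance to $\Lambda$, where one is effectively in the pure Dirichlet or pure Neumann situation and can invoke Lemma~\ref{lem-210216-0333-1}/\ref{lem-210216-0333-2} on a Whitney cube; and points close to $\Lambda$, where the mixed structure genuinely enters.

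The heart of the argument is a good-$\lambda$ / self-improvement scheme in the spirit of \cite[Lemma~4.9 and Lemma~3.19]{OB13}. First I would take a Whitney decomposition $\{\Delta^{(j)}\}$ of $\Delta_1\setminus\Lambda$ (with $\mathrm{diam}\,\Delta^{(j)}\approx\dist(\Delta^{(j)},\Lambda)\approx\delta_j$), so that on each $\Delta^{(j)}$ the solution satisfies either a pure Neumann or a pure Dirichlet problem on a cube of size $\approx\delta_j$, and apply Lemma~\ref{lem-210216-0333-1} or~\ref{lem-210216-0333-2} at exponent $p_0$ (legitimate since $p<p_0/2<2+\epsi$) to control $\vN$ of $Du$ over $\Delta^{(j)}$ by $\|Du\|_{L_{p_0}}$ over a slightly larger interior box $\cQ^{(j)}$ together with the $g_\cN$-term. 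On that interior box I would then invoke the bulk reverse H\"older inequality \eqref{eq6.54} to replace the $L_{p_0}$ average of $Du$ by its $L_1$ (or $L_2$) average over $\cQ_{2\delta_j}^{(j)}$ plus the $g_\cN$-term. Summing the resulting local bounds in $\ell^p$ over the Whitney cubes, and using the weight bookkeeping $\int_{\Delta_r}|Du|^p\,d\sigma\approx\sum_j \delta_j^{d+1}\fint_{\Delta^{(j)}}|Du|^p$, one reassembles a global weighted estimate; the Ahlfors-regularity–type identities of Assumption~\ref{ass-210730-0540}~($\epsi_0$) are exactly what make the $\delta$-power sums converge, and the requirement that $\epsi_0$ be small compared with a margin depending on $p_0$ and $p$ is what controls the exponent on $\delta$ that appears. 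Finally, one converts the resulting $\int_{\cQ_2}|Du|^2\delta^{-s}$-type quantity (which comes out of Lemma~\ref{lem-210622-1233} applied with $p=2$) back to the unweighted $\fint_{\cQ_2}|Du|^2$ by choosing $s$ in the admissible range and absorbing the weight using \eqref{eqn-210622-0316-2}; for the $g_\cN$ term, $\|g_\cN\|_{L_\infty(\Delta_{2r})}$ dominates the weighted $L^{p_0(d+1)/(d+2)}$ averages after the same power-of-$\delta$ computation.

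For the case $g_\cN=0$, the only change is that on the Whitney cubes straddling $\Lambda$ one cannot use the elliptic-type self-improvement with a general angle; instead one uses part~(b) of Lemma~\ref{lem-210903-0918}, which, under Assumption~\ref{ass-210609-0500-3}~($\theta,m$) with $\theta$ small, gives the reverse H\"older inequality for any $p_0<2(m+2)/(m+1)$. Since we only need $p\in(1,(m+2)/(m+1))$, i.e. $2p<2(m+2)/(m+1)$, there is room to pick such a $p_0$ with $p<p_0/2$, and the rest of the summation argument goes through verbatim (with $\epsi_0$ now supplied by the implication Assumption~\ref{ass-210609-0500-3}~(b)~$\Rightarrow$~Assumption~\ref{ass-210730-0540} proved in the appendix). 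The main obstacle, and the step requiring the most care, is the bookkeeping of the $\delta$-powers in the $\ell^p$-summation over Whitney cubes: one has to verify that the exponent produced by combining the gain $p_0/(2)$ from Lemma~\ref{lem-210903-0918}, the loss from passing $\vN$ through Lemma~\ref{lem-210216-0333-1}/\ref{lem-210216-0333-2}, and the Jensen step $p\to 2$ stays on the convergent side of the threshold in \eqref{eqn-210622-0316-1}–\eqref{eqn-210622-0316-2}, which is precisely where the smallness of $\epsi_0$ (equivalently, of $\theta$) is consumed; this is entirely analogous to \cite[Lemma~4.9]{OB13} in the elliptic setting, and in the parabolic setting the only extra input is that the truncated cones $\Gamma_{1/2}(X)$ and the parabolic Whitney cubes interact correctly, which follows from the inclusion $\Gamma_{r/2}(X)\subset\cQ_{3r/2}$ already used above.
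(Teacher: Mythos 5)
Your toolkit is the paper's: Lemma \ref{lem-210622-1233} (weighted boundary estimate), Lemma \ref{lem-210903-0918} (bulk reverse H\"older), and the $\delta$-integrability bounds in Assumption \ref{ass-210730-0540} to absorb the weight powers, with the small-$\epsi_0$/small-$\theta$ slack supplying the margin. However, one step is not justified as written. You propose to apply the pure-boundary Lemmas \ref{lem-210216-0333-1}/\ref{lem-210216-0333-2} on each Whitney cube at exponent $p_0$, ``legitimate since $p<p_0/2<2+\epsi$.'' That inequality only gives $p_0/2<2+\epsi$, not $p_0<2+\epsi$, which is what those lemmas require. The threshold $2+\epsi$ there comes from Brown's pure Dirichlet/Neumann $L_q$-solvability range, whereas $p_0>2$ comes from a Gehring-type self-improvement in the bulk; the two constants are produced by unrelated mechanisms, so there is no guarantee that $p_0<2+\epsi$, and if $p_0\ge 2+\epsi$ the proposed application is not available. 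Shrinking $p_0$ to enter the allowed range would in turn shrink the advertised interval $p\in(1,p_0/2)$, so this is not a harmless normalization.

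The paper avoids this entirely by never invoking Lemmas \ref{lem-210216-0333-1}/\ref{lem-210216-0333-2} at any exponent above $2$: in the proof of Lemma \ref{lem-210622-1233} they are applied only at $p=2$, and $p_0$ enters only through the interior. The chain is: H\"older,
\[
\Big(\fint_{\Delta_r}|Du|^p\Big)^{1/p}
\le
\Big(\fint_{\Delta_r}|Du|^2\delta^{1-s}\Big)^{1/2}
\Big(\fint_{\Delta_r}\delta^{(s-1)p/(2-p)}\Big)^{1/p-1/2},
\]
then Lemma \ref{lem-210622-1233} on the first factor to pass to a weighted interior quantity $\fint_{\cQ_{2r}}|Du|^2\delta^{-s}$, then H\"older again with weight $\delta^{-s/(1-2/p_0)}$, then Lemma \ref{lem-210903-0918} to reduce $(\fint_{\cQ_{2r}}|Du|^{p_0})^{1/p_0}$ to $(\fint_{\cQ_{4r}}|Du|^2)^{1/2}+\|g_\cN\|_{L_\infty}$, and finally the verification that a valid $s$ exists exactly when $p<p_0/2$ (up to $O(\epsi_0)$ slack). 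Your Whitney-cube description is precisely the mechanism inside Lemma \ref{lem-210622-1233}; the fix is to run it at $p=2$ and import $p_0$ only through the interior reverse H\"older, not through the boundary lemmas. Your treatment of the $g_\cN=0$ case via Lemma \ref{lem-210903-0918}(b), and of the passage to $\vN_{r/2}(Du)$ via the inclusion $\Gamma_{r/2}(X)\subset\cQ_{3r/2}$ and a covering argument, are both consistent with the paper.
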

\begin{proof}
	By H\"older's inequality and Lemma \ref{lem-210622-1233} with $s$ to be chosen later,
	\begin{align}
		&\big(\fint_{\Delta_{r}}|Du|^p\big)^{1/p} \lesssim \big(\fint_{\Delta_{r}}|Du(Z)|^2\delta(Z)^{1-s}\big)^{1/2} \big(\fint_{\Delta_{r}} \delta(Z)^{(s-1)p/(2-p)}\big)^{1/p-1/2}\nonumber\\
		&\lesssim \big(\fint_{\Delta_{2r}}1_{\cN}|g_{\cN}|^2\delta(Z)^{1-s}\,d\sigma
		+ r\fint_{\cQ_{2r}}|Du(Z)|^2\delta(Z)^{-s}\,dZ\big)^{1/2}\big(\fint_{\Delta_{r}} \delta(Z)^{(s-1)p/(2-p)}\big)^{1/p-1/2}\notag
		\\&\lesssim
		\left(\|g_\cN\|_{L_\infty}\big(\fint_{\Delta_{2r}}\delta(Z)^{1-s}\big)^{1/2} + \big(\fint_{\cQ_{2r}}|Du|^{p_0}\big)^{1/p_0}\big(\fint_{\cQ_{2r}}\delta(Z)^{-s/(1-2/p_0)}\,dZ\big)^{1/2-1/p_0}r^{1/2}\right)\nonumber
		\\&\quad
		\cdot\big(\fint_{\Delta_{r}} \delta(Z)^{(s-1)p/(2-p)}\big)^{1/p-1/2}.\label{eqn-210828-0706}
	\end{align}
	By  Lemma \ref{lem-210903-0918} and H\"older's inequality,
	\begin{align*}
		\big(\fint_{\cQ_{2r}}|Du|^{p_0}\big)^{1/p_0} &\lesssim \fint_{\cQ_{4r}}|Du| + \big(\fint_{\Delta_{4r}}1_{\cN}|g_\cN|^{p_0(d+1)/(d+2)}\big)^{(d+2)/(p_0(d+1))}\\
		&\lesssim \big(\fint_{\cQ_{4r}}|Du|^2\big)^{1/2} + \|g_\cN\|_{L_\infty(\Delta_{4r})}.
	\end{align*}
Substituting back to \eqref{eqn-210828-0706}, we have
\begin{equation*}
	\begin{split}
		&\big(\fint_{\Delta_{r}}|Du|^p\big)^{1/p}\leq
		C\left(\big(\fint_{\cQ_{4r}}|Du|^2\big)^{1/2} + \|g_\cN\|_{L_\infty(\Delta_{4r})}\right)\\&\quad		\cdot\left(\big(\fint_{\Delta_{2r}}\delta(Z)^{1-s}\big)^{1/2} + \big(\fint_{\cQ_{2r}}\delta(Z)^{-s/(1-2/p_0)}\big)^{1/2-1/p_0}r^{1/2}\right) \big(\fint_{\Delta_{r}} \delta(Z)^{(s-1)p/(2-p)}\big)^{1/p-1/2}.
	\end{split}
\end{equation*}
We are left to cancel the integrals involving $\delta$. From Assumption \ref{ass-210730-0540} ($\epsi_0$), we need to choose $s$ such that
$$
1-s > -1+\epsi_0,\quad -s/(1-2/p_0)>-2+\epsi_0,\quad (s-1)p/(2-p)>-1+\epsi_0.
$$
Such choice is possible if
$$
1+(1-\varepsilon_0)(1-2/p)<(2-\varepsilon_0)(1-2/p_0),
$$
which holds provided that
$p<p_0/2$ and $\epsi_0$ is sufficiently small.


Now we have
\begin{equation}\label{eqn-210917-1002}
	\big(\fint_{\Delta_r}|Du|^{p}\big)^{1/p} \leq C \big(\fint_{\cQ_{2r}} |Du|^2\big)^{1/2} +C\|g_\cN\|_{L_\infty(\Delta_{2r})}.
\end{equation}
Next, we bound $\vec{N}_{r/2}(Du)$. By scaling and covering, we can simply derive from \eqref{eqn-210917-1002} that
\begin{equation}\label{eqn-210917-1009}
	\big(\fint_{\Delta_{7r/4}}|Du|^{p}\big)^{1/p} \leq C \big(\fint_{\cQ_{2r}} |Du|^2\big)^{1/2} +C\|g_\cN\|_{L_\infty(\Delta_{2r})}.
\end{equation}
Noting that
\begin{equation*}
	\Gamma_{r/2}(X)\subset \cQ_{3r/2}\quad\forall X\in\Delta_r,
\end{equation*}
Lemma \ref{lem-210216-0333-1} combined with a standard scaling and covering argument yield
\begin{align*}
	\big(\fint_{\Delta_r}|\vec{N}_{r/2}(Du)|^p\big)^{1/p} &\leq C\big(\fint_{\Delta_{7r/4}}|\p u/\p\vec{n}|^{p}\big)^{1/p} + C\big(\fint_{\cQ_{7r/4}} |Du|^p\big)^{1/p} \\
&\leq C \big(\fint_{\cQ_{2r}} |Du|^2\big)^{1/2} +C\|g_\cN\|_{L_\infty(\Delta_{2r})}.
\end{align*}
Here in the last inequality, we used \eqref{eqn-210917-1009} and H\"older's inequality. To prove the last assertion with $g_\cN=0$, we use Lemma \ref{lem-210903-0918} (b).
\end{proof}

\section{Existence of \texorpdfstring{$L_1$}{} solution on bounded Lipschitz domain}\label{sec-210713-0656}
\subsection{\texorpdfstring{$L_p$}{Lp} estimates for solutions with atom data}
Assume that $g_\cD=0$ and $g_{\cN}$ be an atom supported on $\cN^T\cap \Delta_r$ with $r<R_0/2$:
\begin{equation}\label{eqn-210510-1143}
	\norm{g_{\cN}}_{L_\infty}\leq \frac{1}{|\Delta_r|},\quad\fint_{\Delta_r}g_{\cN} = 0\,\,\text{when}\,\,\Delta_r\subset\cN.
\end{equation}
To facilitate the discussion, we solve \eqref{eqn-main} on a longer cylinder $\cQ^{T_1}$ with the height $T_1=T_1(T,R_0,\diam(\Omega))$ being sufficiently large. Since $g_\cN\in L_\infty\subset \cH^{-1/2}_2$, according to Theorem \ref{thm2.1}, the weak solution $u\in\cH^1_2(\cQ^T_1)$ to \eqref{eqn-main} exists as long as Assumption \ref{ass-0301-2356} holds. It is not difficult to see that corresponding to \eqref{eq8.21}, we have
	\begin{equation}
		\label{eqn-211029-1055}
		\|Du\|_{L_2(\cQ^{T_1})}\le C\|g_{\cN}\|_{L_{\frac{2(d+1)}{d+2}}(\cN^{T_1})},
	\end{equation}
where still $C=C(d, R_0, M, T, \diam(\Omega))$. Then we extend $u$ to be zero for $t<0$. This procedure will make sure $u$ is defined in all the (surface) cubes involved in this section.
	Taking a longer cylinder will not cause problem in the non-tangential maximal function estimate since it will increase the non-tangential cone.

Let us denote
$$\Sigma_k=\Delta_{2^k r}\setminus\Delta_{2^{k-1} r},\quad k\geq 2.$$
We aim to prove
\begin{proposition}\label{prop-210831-0431}
	Suppose that $\Lambda$ satisfies Assumptions \ref{ass-0301-2356} and \ref{ass-210809-0731}, and $u\in \cH^1_2(\cQ^{T_1})$ is the solution defined above with the atom data $g_{\cN}$ supported on $\Delta_r$. Let $p_0$ be the number in Lemma \ref{lem-210903-0918} (a) and $K$ be the positive integer such that $R_0/r \in (2^{K-1},2^K]$. Then there exists a constant $\beta=\beta(d,M)$
such that for any $p\in (1,p_0/2)$, we have
	\begin{equation}
		\label{eq8.09}
		\Big(\fint_{\Delta_{2r}}|Du|^p\,d\sigma\Big)^{1/p}\le C|\Delta_{r}|^{-1},
	\end{equation}
	\begin{equation}
		\label{eq8.10}
		\Big(\fint_{\Sigma_{k}}|Du|^p\,d\sigma\Big)^{1/p}\le C2^{-k\beta }|\Sigma_{k}|^{-1}\quad\forall k\in [2,K],
	\end{equation}
	and
	\begin{equation}\label{eqn-210831-0428}
		\norm{Du}_{L_p(\p_l\cQ^T\setminus\Delta_{2^Kr})} \leq C,
	\end{equation}
where $C=C(d,R_0,M,\diam(\Omega), T)$.
\end{proposition}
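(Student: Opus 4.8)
The strategy is a standard dyadic decay-rate argument, comparable to the elliptic case in \cite{OB13, TOB}, but carried out using the parabolic local estimates established in Lemmas~\ref{lem-210903-0918}--\ref{lem-210706-0144}. The three estimates \eqref{eq8.09}, \eqref{eq8.10}, \eqref{eqn-210831-0428} are proved in three separate ranges: near the support of the atom, in the dyadic annuli $\Sigma_k$ up to scale $R_0$, and in the ``far field'' past scale $R_0$.

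First I would prove \eqref{eq8.09}. On $\Delta_{2r}$ I apply Lemma~\ref{lem-210706-0144} (with $r$ replaced by a comparable radius, and noting $\|g_\cN\|_{L_\infty}\le |\Delta_r|^{-1}$) to reduce $\big(\fint_{\Delta_{2r}}|Du|^p\big)^{1/p}$ to $C\big(\fint_{\cQ_{cr}}|Du|^2\big)^{1/2}+C|\Delta_r|^{-1}$. For the bulk $L_2$-average I use the global estimate \eqref{eqn-211029-1055}: $\|Du\|_{L_2(\cQ^{T_1})}\le C\|g_\cN\|_{L_{2(d+1)/(d+2)}}\le C|\Delta_r|^{-1}|\Delta_r|^{(d+2)/(2(d+1))}$, and then divide by $|\cQ_{cr}|^{1/2}\approx r^{(d+2)/2}$; a short computation with $|\Delta_r|\approx r^{d+1}$ shows this is $\le C|\Delta_r|^{-1}$. (One must check the scaling exponents match; this is the routine part.)

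Next, the decay estimate \eqref{eq8.10} on $\Sigma_k$. Here the key mechanism is Caccioppoli-type energy decay away from the atom. Because $g_\cN$ is supported in $\Delta_r$, on $\Delta_{2^{k}r}\setminus\Delta_{2^{k-2}r}$ the function $u$ solves the homogeneous mixed problem, so one has a reverse Hölder / interior gradient bound there. The decay itself comes from a Caccioppoli inequality combined with the mean-value/compatibility structure: since $u$ is extended by zero for $t<0$ and the atom has mean zero (when $\Delta_r\subset\cN$), one gets a gain of a fixed power $2^{-k\beta}$ per dyadic step — this is where the exponent $\beta=\beta(d,M)$ is produced, exactly as in \cite[Section~8]{OB13} or \cite[Lemma~2.4ff]{TOB}, but using the parabolic energy inequality \eqref{eqn-211029-1055} restricted to cylinders, the Caccioppoli inequality (cf.\ \cite[Lemma~3.8]{CDL21}), and Lemma~\ref{lem-210706-0144} to pass from the $L_2$ bulk bound to the $L_p$ surface bound on $\Sigma_k$. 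Concretely: iterate $\int_{\cQ_{2^k r}\setminus\cQ_{2^{k-1}r}}|Du|^2 \le$ (fraction)$\,\times\int_{\cQ_{2^{k+1}r}\setminus\cQ_{2^{k-2}r}}|Du|^2$ plus a harmless term, summed against the known total energy $\le C|\Delta_r|^{-2}|\Delta_r|^{(d+2)/(d+1)}$; this yields $\fint_{\cQ_{2^k r}}|Du|^2 \le C 2^{-2k\beta}(2^kr)^{-(d+2)}|\Delta_r|^{-2+(d+2)/(d+1)}\cdot(\dots)$, and feeding into Lemma~\ref{lem-210706-0144} and simplifying with $|\Sigma_k|\approx (2^kr)^{d+1}$ gives \eqref{eq8.10}.

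Finally, \eqref{eqn-210831-0428}: past scale $2^K r\approx R_0$, $u$ is caloric (solves the homogeneous mixed problem) in a neighborhood of $\p_l\cQ^T\setminus\Delta_{2^K r}$, uniformly away from $\Lambda$ only in the sense that we can cover this set by finitely many surface cubes of radius $\sim R_0$ (the number depending on $\diam(\Omega)$, $T$, $R_0$); on each, Lemma~\ref{lem-210216-0333-1}, Lemma~\ref{lem-210216-0333-2}, or the mixed reverse-Hölder Lemma~\ref{lem-210706-0144} gives $\|Du\|_{L_p}\le C\|Du\|_{L_2(\cQ^{T_1})}\le C|\Delta_r|^{-1}\cdot|\Delta_r|^{(d+2)/(2(d+1))}$, and since $r<R_0/2$ is bounded below in terms of $R_0$ only up to the decay already extracted, summing the finitely many pieces and absorbing powers of $R_0$ into $C$ gives the claim. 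I expect the \textbf{main obstacle} to be the decay step \eqref{eq8.10}: in the parabolic setting one does not have harmonic conjugates or the clean integral identities of \cite{OB13, TOB}, so the per-annulus energy gain must be extracted purely from the Caccioppoli inequality plus the zero-mean/zero-initial-data cancellation, and one must be careful that the geometry of $\Lambda$ (only Assumptions~\ref{ass-0301-2356} and \ref{ass-210809-0731}) does not obstruct applying Lemma~\ref{lem-210706-0144} on annuli that straddle $\Lambda$ — this is precisely why Lemma~\ref{lem-210706-0144} was set up to tolerate $\operatorname{dist}(\Delta_r,\Lambda)\le Ar$.
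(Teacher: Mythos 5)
Your plan for \eqref{eq8.09} is essentially the paper's: the global $L_2$ bound \eqref{eqn-211029-1055} gives $\big(\fint_{\cQ_{4r}}|Du|^2\big)^{1/2}\le C/|\Delta_r|$, and then a local surface reverse-H\"older passes to $L_p$. (Minor point: the paper splits into two cases, applying Lemma~\ref{lem-210216-0333-1} when $\Delta_{4r}\subset\cN$ and Lemma~\ref{lem-210706-0144} when $\Delta_{2r}$ is near $\Lambda$, because Lemma~\ref{lem-210706-0144} is only stated for cubes with $\dist(\Delta_r,\Lambda)\le Ar$.)

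The genuine gap is in your treatment of the decay step, which the paper encapsulates as Lemma~\ref{lem-210706-0149}: for $R_X=\dist(X,\Delta_r)>4r$, one needs $\|Du\|_{L_2(\cQ_{R_X/2}(X))}\le C(r/R_X)^\beta R_X^{-d/2}$. You propose to extract this from a Caccioppoli/hole-filling iteration on dyadic annuli, citing \cite{OB13,TOB}. This does not work, for two reasons. First, the elliptic references \cite{OB13,TOB} do not obtain this decay by hole-filling; they use Green's function estimates (and in part integral identities for harmonic conjugates), which the present paper explicitly says are unavailable in the parabolic setting. Second, the Caccioppoli plus Poincar\'e inequality for a caloric function on a parabolic cylinder only yields an inequality of the form $\int_{\cQ_\rho}|Du|^2\le \gamma \int_{\cQ_{2\rho}}|Du|^2$ with $\gamma<1$ when one \emph{shrinks} the cylinder; this pushes energy bounds inward toward the support of the data, not outward, and hence cannot by itself produce a quantitative $(r/R_X)^\beta$ decay \emph{away} from the atom. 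Your invocation of the ``zero-mean / zero-initial-data cancellation'' as the source of the gain is exactly the right cancellation to use, but in the paper it enters through a duality argument: one tests against $f\in C_0^\infty(\cQ_{R_X/2}(X))$, solves the adjoint problem $-v_t-\Delta v=\dv f$ with zero terminal data and mixed boundary conditions, writes $\int Du\cdot f=-\int_{\Delta_r}1_\cN g_\cN\,(v-v(X_0))$ using either the atom's zero mean or $v|_\cD=0$ to subtract the constant, and then the De Giorgi--Nash--Moser H\"older estimate for $v$ near $\Delta_r$ (with oscillation comparison on scale $R_X$) supplies the $(r/R_X)^\beta$ factor and, via a Poincar\'e-type inequality, the $R_X^{-d/2}$ factor. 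This adjoint-solution mechanism has no counterpart in a direct Caccioppoli iteration on $u$.

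This gap propagates into \eqref{eqn-210831-0428}. Your bound there is $\|Du\|_{L_p}\le C\|Du\|_{L_2(\cQ^{T_1})}\le C|\Delta_r|^{-1+(d+2)/(2(d+1))}\approx r^{-d/2}$, which diverges as $r\to 0$; the missing $(r/R_0)^\beta$ factor from the decay lemma is precisely what cancels this. The paper covers $\p_l\cQ^T\setminus\Delta_{2^Kr}$ by finitely many surface cubes of radius $R_0/4$, applies Lemma~\ref{lem-210706-0149} with $R_X\gtrsim 2^{K-1}r\ge R_0/2$ to get $\big(\fint_{\cQ_{R_0/2}(Z)}|Du|^2\big)^{1/2}\le C2^{-K\beta}R_0^{-d-1}$, and then sums; you would need to carry the decay estimate explicitly into this step rather than gesture at it. Once the decay lemma is established via duality, the rest of your outline for \eqref{eq8.10} and \eqref{eqn-210831-0428} (cover $\Sigma_k$ by cubes $\Delta_{2^{k-2}r}(Z)$ with finite overlap, apply the appropriate local lemma on each depending on whether the enlarged cube is pure Dirichlet, pure Neumann, or mixed, and sum) is exactly what the paper does.
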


To prove Proposition \ref{prop-210831-0431}, we first prove the following decay estimate.
\begin{lemma}\label{lem-210706-0149}
	There exists a constant $\beta=\beta(d,M)\in(0,1)$, such that for any point $X\in ( -T_2,T+T_2)\times \Omega$ with
	$$
T_2=((1+\alpha)\diam(\Omega) + R_0)^2,\quad	R_X:=\dist(X,\Delta_r) >4r,
	$$
	we have
	\begin{equation*}
		\norm{Du}_{L_2(\cQ_{R_X/2}(X))} \leq C (r/R_X)^\beta R_X^{-d/2},
	\end{equation*}
where $C=C(d,R_0,M,\diam(\Omega), T)$.
\end{lemma}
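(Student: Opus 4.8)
\textbf{Proof proposal for Lemma \ref{lem-210706-0149}.}
The plan is to prove the energy decay $\|Du\|_{L_2(\cQ_{R_X/2}(X))}\lesssim (r/R_X)^\beta R_X^{-d/2}$ by iterating a dyadic decay estimate for caloric-type solutions away from the support of the data. The starting point is the global energy bound \eqref{eqn-211029-1055}, which together with the atom normalization $\|g_\cN\|_{L_\infty}\le|\Delta_r|^{-1}$ and $\supp(g_\cN)\subset\Delta_r$ gives $\|g_\cN\|_{L_{2(d+1)/(d+2)}(\cN^{T_1})}\lesssim |\Delta_r|^{-1}|\Delta_r|^{(d+2)/(2(d+1))}\approx r^{-(d+1)}\cdot r^{(d+2)/2}= r^{-d/2}$, so that $\|Du\|_{L_2(\cQ^{T_1})}\lesssim r^{-d/2}$. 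The key mechanism is the following: for any $X_0\in\p_l\cQ$ and any radius $\rho\in(0,R_0/2)$ with $\dist(X_0,\Delta_r)\gtrsim\rho$, the function $u$ solves the \emph{homogeneous} mixed problem \eqref{eqn-210827-0729} in $\cQ_{2\rho}(X_0)$ (no data term, since we are away from $\supp g_\cN$), and hence by the De Giorgi--Nash--Moser estimate and a Caccioppoli inequality one obtains a genuine decay
\begin{equation*}
	\int_{\cQ_{\rho}(X_0)}|Du|^2\,dZ\le C\,\Big(\frac{\rho}{\rho'}\Big)^{d+2\beta}\int_{\cQ_{\rho'}(X_0)}|Du|^2\,dZ,\qquad \rho\le\rho',
\end{equation*}
for some $\beta=\beta(d,M)\in(0,1)$; the same holds for interior cubes (where $u$ is simply caloric). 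Iterating this over the dyadic annuli between scale $r$ and scale $R_X$ yields the claimed factor $(r/R_X)^\beta$.

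The steps, in order, are as follows. First I would set up the normalization and record $\|Du\|_{L_2(\cQ^{T_1})}\lesssim r^{-d/2}$ as above. Second, I would establish the single-scale Morrey-type decay estimate displayed above for homogeneous mixed solutions in cubes centered on $\p_l\cQ$, and the analogous statement for caloric functions in interior cubes; for the boundary version this is where Assumptions \ref{ass-0301-2356} and \ref{ass-210809-0731} enter (they guarantee the Sobolev--Poincar\'e and Caccioppoli inequalities with $u=0$ on $\cD$, exactly as used in Lemma \ref{lem-210903-0918}), and one uses the reverse H\"older inequality \eqref{eq6.54} to upgrade $L^1$-averages to $L^2$. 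Third, I would distinguish two regimes for the center $X$: (i) if $\dist(X,\p_l\cQ)\gtrsim R_X$, then $\cQ_{R_X/2}(X)$ is an interior cube and one applies the interior decay directly; (ii) if $\dist(X,\p_l\cQ)\ll R_X$, pick a boundary point $X_0\in\p_l\cQ$ with $|X-X_0|=\dist(X,\p_l\cQ)$, note $\dist(X_0,\Delta_r)\approx R_X$, and estimate $\|Du\|_{L_2(\cQ_{R_X/2}(X))}\le \|Du\|_{L_2(\cQ_{cR_X}(X_0))}$ for a suitable dimensional constant $c$. Fourth, I would run the dyadic iteration: choosing the chain of radii $r\approx\rho_0<\rho_1<\cdots<\rho_M\approx R_X$ with $\rho_{j+1}=2\rho_j$, at each annulus the relevant sub-cube either misses $\Delta_r$ (so the homogeneous/interior estimate applies) or, for the innermost few scales, is absorbed into the global bound $\|Du\|_{L_2(\cQ^{T_1})}\lesssim r^{-d/2}$; multiplying the $\sim\log_2(R_X/r)$ decay factors gives $(r/R_X)^\beta$ after adjusting $\beta$ slightly, and the residual volume factor is $R_X^{-d/2}$. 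The bound $R_X>4r$ ensures there is at least one clean dyadic step, and the restriction $X\in(-T_2,T+T_2)\times\Omega$ with $T_2$ as defined guarantees all the cubes $\cQ_{cR_X}(X_0)$ stay inside the region $\cQ^{T_1}$ where $u$ (extended by zero for $t<0$) is defined and satisfies the equation, so no boundary-in-time effects intrude.

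The main obstacle I expect is the boundary decay estimate near $\Lambda$: when the cube $\cQ_{2\rho}(X_0)$ straddles the interface $\Lambda$, the solution is only in $\cH^1_2$ with possibly unbounded $Du$ and no control on $u_t$ near $\Lambda$, so the classical De Giorgi--Nash--Moser argument does not apply verbatim. This is precisely the difficulty flagged in the introduction, and the way around it is to avoid pointwise oscillation estimates and instead work purely at the level of $L^2$ energy: combine the Caccioppoli inequality (valid for $\cH^1_2$ weak solutions of the mixed problem, using only $u=0$ on $\cD$) with the parabolic Sobolev--Poincar\'e inequality of \cite[Lemma~3.8]{CDL21} and the self-improving reverse H\"older inequality \eqref{eq6.54}, which together force a quantified energy decay rate $\beta(d,M)$ without ever touching $u_t$ pointwise. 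Once this single-scale decay is in hand, the iteration is routine geometry of dyadic cubes, and the final cases (interior versus boundary center, and the few innermost scales handled by the global bound) are bookkeeping.
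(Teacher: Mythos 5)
Your proposal takes a genuinely different route from the paper's, and I don't think it closes. The paper proves this lemma by a duality argument: for $f\in C_0^\infty(\cQ_{R_X/2}(X))$ one solves the \emph{adjoint} mixed problem $-v_t-\Delta v=\dv f$ with zero terminal data, writes $\int Du\cdot f=-\int_{\Delta_r}1_\cN\, g_\cN\,(v-v(X_0))$, and then uses the De Giorgi--Nash--Moser H\"older estimate on $v$ (which is caloric in $\cQ_{R_X/2}$, away from $\supp f$) together with Poincar\'e and the global $L_2$ estimate for $v$ to obtain $\sup_{\Delta_r}|v-v(X_0)|\lesssim r^\beta R_X^{1-\beta-(d+2)/2}\|f\|_{L_2}$. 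The factor $(r/R_X)^\beta$ comes entirely from the \emph{cancellation} built into the atom: either $\int g_\cN=0$ (so one can subtract $v(X_0)$) or $\Delta_r$ touches $\cD$ and $v(X_0)=0$. This cancellation is the decay mechanism, and the duality representation is what makes it visible.

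Your Morrey-iteration approach does not capture that mechanism, and I don't see how to make the iteration produce the required factor. The Morrey decay you write, $\int_{\cQ_\rho(X_0)}|Du|^2\lesssim(\rho/\rho')^{d+2\beta}\int_{\cQ_{\rho'}(X_0)}|Du|^2$, bounds the energy in a \emph{small} cube by the energy in a \emph{larger} cube \emph{at the same center}. To use it for $\cQ_{R_X/2}(X)$ you would need to iterate down to radius $R_X/2$ from some larger radius $\rho'$ centered at $X$ or a nearby boundary point; but any such $\rho'$ comparable to $R_X$ already makes $\cQ_{\rho'}(X)$ meet $\Delta_r$, where the decay fails because the inhomogeneous data enters. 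And iterating downward to tiny scales bounds the wrong quantity (a tiny cube near $X$, not $\cQ_{R_X/2}(X)$). If you iterate upward, starting from $\rho_0\approx r$ to $\rho_M\approx R_X$, the inequality runs the wrong way: it gives no smallness for the large cube. So the dyadic chain you describe cannot multiply to $(r/R_X)^\beta$ for the target norm.

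There is a second, smaller gap. You propose to avoid pointwise oscillation (De Giorgi--Nash--Moser) and get the decay rate ``purely at the level of $L^2$ energy'' from Caccioppoli, parabolic Sobolev--Poincar\'e, and the reverse H\"older inequality \eqref{eq6.54}. Those three tools give self-improvement in integrability and a doubling-type inequality, but no geometric decay: combining Caccioppoli with Poincar\'e just yields $\int_{\cQ_\rho}|Du|^2\lesssim\int_{\cQ_{2\rho}}|Du|^2$, and the reverse H\"older likewise only doubles. A quantified rate $\beta(d,M)$ requires an oscillation estimate (De Giorgi--Nash--Moser or Campanato), which is exactly what the paper applies — but to the adjoint solution $v$, not to $u$. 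Your earlier sentence in the proposal does invoke De Giorgi--Nash--Moser, so the two parts of your proposal are in tension. In short: replace the Morrey iteration on $u$ with the duality identity and apply the boundary H\"older estimate to the adjoint $v$ near $\Delta_r$; that is the argument that actually closes, and it is essentially a single-scale estimate rather than an iteration.
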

\begin{proof}
 	We use the duality argument and the De Giorgi-Nash-Moser estimate following the argument in the proof of \cite[Lemma 2.2]{DK17}. Compared to the proofs of the elliptic case in \cite{OB13,TOB}, we do not use any Green's function estimate.
 	
 	More precisely, let $f\in C_0^\infty(\cQ_{R_X/2}(X))$ and $v\in \cH^1_2$ be the solution to the adjoint equation
	$$
	-v_t-\Delta v=\dv f
	$$
	with the zero terminal condition $v(T+T_2+R_X^2,\cdot)=0$ and zero Dirichlet and Neumann conditions on $\cD$ and $\cN$, respectively. By duality,
	\begin{equation}\label{eqn-210828-0744}
		\int_{\cQ} Du\cdot f=-\int_{\cN\cap \Delta_r} g_\cN v=-\int_{\Delta_r} 1_{\cN}g_\cN (v-v(X_0)),
	\end{equation}
	where $X_0\in \Delta_r$ can be any point when $\Delta_r\subset \cN$ and
	\begin{equation*}
		X_0\in \cD\cap\Delta_r\,\,\text{when}\,\,\Delta_r\cap \cD\neq\emptyset.
	\end{equation*}
In the last equality of \eqref{eqn-210828-0744}, we used the fact that $g_{\cN}$ has  zero mean when $\Delta_r\subset \cN$ and $v(X_0)=0$ when $\Delta_r\cap \cD\neq\emptyset$. Since $v$ satisfies the homogeneous heat equation in $\cQ_{R_X/2}$, by the De Giorgi-Nash-Moser estimate, we can find some $\beta=\beta(d,M)>0$, such that
	\begin{align}
		\sup_{Y\in\Delta_r}|v(Y)-v(X_0)|&\le (2\sqrt{d}r)^\beta[v]_{C^{\beta/2,\beta}(\cQ_{r})}
		\leq Cr^\beta R_X^{-\beta}(|v-v_0|^2)^{1/2}_{\cQ_{R_X/2}}\nonumber\\
		&\le Cr^\beta R_X^{-\beta}(|v-v_0|^2)^{1/2}_{\cQ_{R_X}}\nonumber\\
		&\le Cr^\beta R_X^{1-\beta}(|Dv|^2)^{1/2}_{\cQ_{R_X}}\label{eqn-210828-0757}\\
		&\le C r^\beta R_X^{1-\beta-(d+2)/2}\|f\|_{L_2(\cQ)},\label{eqn-210828-0801}
	\end{align}
where
\begin{equation*} v_0=(v)_{Q_{R_X}}\,\,\text{when}\,\,\cQ_{R_X/2}\cap\cD=\emptyset,\quad v_0=0\,\,\text{otherwise}.
\end{equation*}
In \eqref{eqn-210828-0757}, we applied the parabolic Poincar\'e inequality which can be found in \cite[Lemma~3.8]{CDL21} or \cite[Lemma~3.1]{MR2304157}: when $\cQ_{R_X/2}\cap\cD\neq\emptyset$,  Assumption \ref{ass-210809-0731} was needed to guarantee that $u=0$ on a large enough subset of the boundary; when $\cQ_{R_X/2}\cap\cD=\emptyset$, we applied the Poincar\'e inequality for functions with space-time zero mean.
Substituting \eqref{eqn-210828-0801} back into  \eqref{eqn-210828-0744}, noting $\|g_\cN\|_{L_1}\le 1$, the lemma is proved.
\end{proof}

\begin{proof}[Proof of Proposition \ref{prop-210831-0431}]
We first prove \eqref{eq8.09}. By \eqref{eqn-211029-1055} and the fact $\supp(g_\cN)\subset \Delta_r$,
\begin{equation}\label{eqn-210830-0644}
	\begin{split}
		\big(\fint_{\cQ_{4r}} |Du|^2\big)^{1/2} &\leq C|\cQ_{4r}|^{-1/2}\|Du\|_{L_2(\cQ^{T+5R_0^2})} \leq C|\cQ_{4r}|^{-1/2}\norm{g_\cN}_{L_{2(d+1)/(d+2)}(\cN^T)}
		\\&\leq C |\cQ_{4r}|^{-1/2}|\Delta_r|^{(d+2)/(2d+2)} \|g_\cN\|_{L_\infty(\Delta_r)}
		\\&\leq C/|\Delta_r|.
	\end{split}
\end{equation}

\textbf{Case 1}: $\operatorname{dist}(\Delta_{2r},\Lambda)> 2(\sqrt{d}+\sqrt 3)r$. In this case, $\Delta_{4r}\subset \cN$. We apply Lemma \ref{lem-210216-0333-1} with $p=2$, \eqref{eqn-210830-0644}, and \eqref{eqn-210510-1143} to obtain
\begin{align}\label{eqn-211017-0830-1}
	\big(\fint_{\Delta_{2r}} |Du|^2\big)^{1/2} \leq C \big(\fint_{\Delta_{4r}} |g_{\cN}|^2\big)^{1/2} + C\big(\fint_{\cQ_{4r}}|Du|^2\big)^{1/2} \le C/|\Delta_r|.
\end{align}

\textbf{Case 2}: $\operatorname{dist}(\Delta_{2r},\Lambda)\leq 2(\sqrt{d}+
\sqrt 3)r$. From Lemma \ref{lem-210706-0144}, \eqref{eqn-210830-0644},  and \eqref{eqn-210510-1143}, we have
	\begin{align}\label{eqn-211017-0830-2}
		\big(\fint_{\Delta_{2r}}|Du|^{p}\big)^{1/p}
		&\leq C
		\big(\fint_{\cQ_{4r}} |Du|^2\big)^{1/2} +C\|g_\cN\|_{L_\infty(\Delta_{4r})}
		\le C/|\Delta_r|.
	\end{align}
Hence, \eqref{eq8.09} is proved.

Next we prove \eqref{eq8.10}.
Take a covering $\Sigma_k\subset \bigcup\Delta_{2^{k-2}r}(Z)$ with only finite overlapping and \begin{equation}\label{eqn-211015-0705}
	\big(\Delta_{2^{k-1} r}(Z)\cap\supp(g_\cN)\subset\big)\Delta_{2^{k-1} r}(Z)\cap\Delta_r=\emptyset.
\end{equation}
On each $\Delta_{2^{k-2}r}(Z)$, there are three possibilities: $\Delta_{2^{k-1}r}(Z)\subset \cD$ (pure Dirichlet), $\Delta_{2^{k-1}r}(Z)\subset\cN$ (pure Neumann), or $\Delta_{2^{k-1}r}(Z)\cap\Lambda\neq\emptyset$ (mixed).

In the first two cases, we apply first Lemma \ref{lem-210216-0333-1} (pure Dirichlet) or Lemma \ref{lem-210216-0333-2} (pure Neumann, noting \eqref{eqn-211015-0705}) to obtain
\begin{equation}\label{eqn-210706-0931}
	\big(\fint_{\Delta_{2^{k-2}r}(Z)} |Du|^2\big)^{1/2} \leq C \big(\fint_{\cQ_{2^{k-1}r}(Z)}|Du|^2\big)^{1/2}\leq C2^{-k\beta}|\Sigma_k|^{-1}.
\end{equation}
Here in the last inequality, we applied Lemma \ref{lem-210706-0149}, noting $\dist(Z,\Delta_r)\approx 2^kr$ and $|\Sigma_k|\approx (2^kr)^{d+1}$.

When $\Delta_{2^{k-1}r}(Z)\cap\Lambda\neq\emptyset$, we apply Lemma \ref{lem-210706-0144} with $g_\cN=0$ on $\Delta_{2^{k-1}r}(Z)$ to obtain
\begin{equation}\label{eqn-210713-1259}
	\big(\fint_{\Delta_{2^{k-2}r}(Z)} |Du|^p\big)^{1/p} \leq C \big(\fint_{\cQ_{2^{k-1}r}(Z)}|Du|^2\big)^{1/2}\leq C2^{-k\beta}|\Sigma_k|^{-1}.
\end{equation}
Here, $2^{k-1}r \leq R_0$ was needed in order to apply Lemma \ref{lem-210706-0144}. Again, in the last inequality we applied Lemma \ref{lem-210706-0149}.

Adding up \eqref{eqn-210706-0931} and \eqref{eqn-210713-1259} for all cubes in the covering, we obtain \eqref{eq8.10}.

We are left to prove \eqref{eqn-210831-0428}. We cover $\p_l\cQ^T\setminus\Delta_{2^Kr}$ by surface cubes $\Delta_{R_0/4}(Z)$ with centers $Z$ satisfying $\text{dist}(Z,\Delta_r)\ge 2^{K-1}r$.
Similar to \eqref{eqn-210706-0931}, for surface cubes with $\Delta_{R_0/2}(Z)\subset \cD$ or $\cN$, we have
\begin{equation}\label{eqn-211017-0845-1}
	\begin{split}
		\big(\fint_{\Delta_{R_0/4}(Z)} |Du|^2\big)^{1/2} \leq C \big(\fint_{\cQ_{R_0/2}(Z)}|Du|^2\big)^{1/2}
		&\leq C 2^{-K\beta} (2^{K-1}r)^{-d/2}R_0^{-(d+2)/2}
		\\&\leq C 2^{-K\beta} R_0^{-d-1},
	\end{split}	
\end{equation}
 where in the last steps we used $2^{K-1}r\geq R_0/2$. Similarly, for surface cubes with $\Delta_{R_0/2}(Z)\cap\Lambda\neq\emptyset$, we have
\begin{equation}\label{eqn-211017-0845-2}
		\big(\fint_{\Delta_{R_0/4}(Z)} |Du|^p\big)^{1/p} \leq C \big(\fint_{\cQ_{R_0/2}(Z)}|Du|^2\big)^{1/2}
		\leq C 2^{-K\beta} R_0^{-d-1}.
\end{equation}
From these, \eqref{eqn-210831-0428} can be obtained by noting that $\p_l\cQ^T$ is bounded.
\end{proof}

\subsection{\texorpdfstring{$L_1$}{L1} estimate for the nontangential maximal function of \texorpdfstring{$Du$}{Du} and existence}\label{sec-211018-0507}
Now we are ready to prove Theorem \ref{thm-210830-0419} (a). As mentioned before, due to the $L_1$-solvability for the Dirichlet regularity problem in \cite{B90}, we may assume $g_\cD=0$.

Before giving our proof, let us mention that  some parts of the argument in \cite{OB13} do not work for the heat equation. In particular, for the heat equation, it seems not possible to find representation formulae as in the proof of \cite[Theorem 4.17]{OB13}, and we also do not have the equalities
$$
\int_{\partial\Omega}\partial u/\partial n\,d\sigma=0=\int_{\partial\Omega}\big(n_j\partial u/y_i-n_i\partial u/y_j)\,d\sigma
$$
used in \cite{TOB} and \cite{OB13}.
\begin{proof}[Proof of Theorem \ref{thm-210830-0419} (a)]
By approximation, we only prove that for any $H^1$ atom $g_{\cN}$ supported on $\cN^T\cap \Delta_{r}$ (see \eqref{eqn-210510-1143}), the weak solution $u\in\cH^1_2(\cQ^T)$ from Section \ref{sec1.1} satisfies the estimate \eqref{eqn-210910-0549}.
It is easily seen that Proposition \ref{prop-210831-0431} gives
$$
\int_{\p_l\cQ^T} |Du|\,d\sigma\le C.
$$
We only need to replace $|Du|$ above with $\vec{N}(Du)$.
In the following, let us also denote
$$\Sigma_1=\Delta_{2r}.$$
Recall the definition of $\Sigma_k$ and $K$ in Proposition \ref{prop-210831-0431}. Now on $\Sigma_k$ with $k\in[1,K]$, we first estimate $\vec{N}_{2^{k-3}r}(Du)$, for which we adapt the proof of Proposition \ref{prop-210831-0431}by replacing $Du$ with its the truncated non-tangential maximal function when applying the local estimates in Lemmas \ref{lem-210216-0333-1}, \ref{lem-210216-0333-2}, and \ref{lem-210706-0144}. More precisely, in the proof of Proposition \ref{prop-210831-0431}, in \eqref{eqn-211017-0830-1} and \eqref{eqn-211017-0830-2}, we replace
$$
\big(\fint_{\Delta_{2r}}|Du|^p\big)^p\quad\text{with}\,\,\big(\fint_{\Delta_{2r}}
\vec{N}_{r}(Du)^p\big)^{1/p},
$$
and in \eqref{eqn-210706-0931} and \eqref{eqn-210713-1259}, we replace
$$\big(\fint_{\Delta_{2^{k-2}r}(Z)}|Du|^p\big)^{1/p}\quad\text{with}\,\,\big(\fint_{\Delta_{2^{k-2}r}(Z)}\vec{N}_{2^{k-3}r}(Du)^p\big)^{1/p}.$$
Then the proof of Proposition \ref{prop-210831-0431} combined with H\"older's inequality yields
\begin{equation}\label{eqn-211017-0903-1}
	\int_{\Sigma_k}\vec{N}_{2^{k-3}r}(Du)\,d\sigma \leq \big(\fint_{\Sigma_k}(\vec{N}_{2^{k-3}r}(Du))^p\,d\sigma\big)^{1/p} \leq C2^{-k\beta}|\Sigma_k|^{-1}\quad \forall k\in[1,K].
\end{equation}
Now we estimate $\vec{N}^{2^{k-3}r}(Du)$. For any $X\in \Sigma_k$ with $k\in[1,K]$ and $Y\in \Gamma^{2^{k-3}r}(X)$, by definition of $\Gamma(X)$,
\begin{equation*}
	(\dist(Y,\Delta_r)\geq)d(Y)\geq \frac{|X-Y|}{1+\alpha} > \frac{2^{k-3}r}{1+\alpha}.
\end{equation*}
By the interior estimate for caloric functions,
\begin{equation*}
	|Du(Y)| \leq C(|Du|^2)^{1/2}_{\cQ_{\frac{2^{k-4}r}{1+\alpha}}(Y)} \leq C2^{-k\beta}|\Sigma_k|^{-1}.
\end{equation*}
Here, in the last inequality, for large $k$ satisfying $2^{k-3}r/(1+\alpha)>4r$, we applied Lemma \ref{lem-210706-0149}, noting that
\begin{equation*}
	\dist(Y,X) \leq (1+\alpha)\dist(Y,\p_l\cQ) \leq \diam(\Omega),
\end{equation*}
which implies $Y\in (0,T+T_2)\times\Omega$. For small $k$, such inequality can be simply derived from \eqref{eqn-211029-1055} and \eqref{eqn-210510-1143}.
Hence,
\begin{equation}\label{eqn-211017-0903-2}
	\|\vec{N}^{2^{k-3}r}(Du)\|_{L_\infty(\Sigma_k)} \leq C2^{-k\beta}|\Sigma_k|^{-1} \quad \forall k\in[1,K].
\end{equation}
Combining \eqref{eqn-211017-0903-1} and \eqref{eqn-211017-0903-2}, we have: for $k\in[1,K]$,
\begin{equation}\label{eqn-211017-0914}
	\int_{\Sigma_k}\vec{N}(Du) \leq C2^{-k\beta}|\Sigma_k|^{-1}.
\end{equation}
In the same spirit with \eqref{eqn-211017-0903-1}, we can adapt the argument in \eqref{eqn-211017-0845-1} and \eqref{eqn-211017-0845-2} to obtain: for any $Z$ with $\dist(Z,\Delta_r)\geq 2^{K-1}r$,
\begin{equation}\label{eqn-211017-0912-1}
	\fint_{\Delta_{R_0/2}(Z)}\vec{N}_{R_0/4}(Du) \leq \big(\fint_{\Delta_{R_0/2}(Z)}(\vec{N}_{R_0/4}(Du))^p\big)^{1/p} \leq C2^{-K\beta}R_0^{-d-1}.
\end{equation}
Furthermore, similar to \eqref{eqn-211017-0903-2}, by the interior estimate and Lemma \ref{lem-210706-0149}, we have
\begin{equation}\label{eqn-211017-0912-2}
	\|\vec{N}^{R_0/4}(Du)\|_{L_\infty(\Delta_{R_0/2}(Z))} \leq C(|Du|^2)^{1/2}_{\cQ_{\frac{R_0}{4(1+\alpha)}}(Y)} \leq C2^{-K\beta}R_0^{-d-1}.
\end{equation}
Hence, from \eqref{eqn-211017-0912-1} and \eqref{eqn-211017-0912-2}, we have
\begin{equation}\label{eqn-211017-0915}
	\int_{\Delta_{R_0/2}(Z)} \vec{N}(Du) \leq C2^{-K\beta}R_0^{-d-1}.
\end{equation}
Finally, as the proof of Proposition \ref{prop-210831-0431}, we take the summation of \eqref{eqn-211017-0914} for $k\in[1,K]$, then cover the rest of $\p_l\cQ^T$ with finitely many cubes with radii $R_0/2$, and apply \eqref{eqn-211017-0915} on each of them to obtain the desired estimate. Hence, Theorem \ref{thm-210830-0419} (a) is proved.
\end{proof}

\section{Uniqueness}\label{sec-210817-0523}
In this section, we prove the uniqueness in Theorem \ref{thm-210830-0419} (b). More precisely, suppose that $u\in \cH^1_{2,\text{loc}}(\cQ^T)$ solves
\begin{equation*}
	\begin{cases}
		u_t-\Delta u = 0  & \text{in }\, \cQ^{T},\\
		\frac{\p u}{\p \vec{n}} = 0  & \text{on }\, \cN^{T},\\
		u = 0 & \text{on }\, \cD^{T},\\
		u = 0 & \text{on }\, \{0\}\times\Omega,\\
		\vec{N}(Du) \in L_q(\p_l\cQ^{T})
	\end{cases}
\end{equation*}
for some $q\geq 1$, then $u=0$. Recall that by a solution, we mean that the weak formulation holds for any test function $\Psi\in C^\infty_c(\cQ^{T})$, with the boundary conditions being satisfied in the sense of the ``non-tangential limit''. In the following, we focus on the case when $q=1$, which will imply the uniqueness when $q>1$.

Fix a sufficiently small number $h_0$. Let $C_\alpha$ be a constant to be chosen later such that \eqref{eqn-211101-0708} holds for any $h\in(0,h_0)$. Such choice can be made independent of $h_0$. To prove $u\equiv 0$, we show that for any small cylinder $Q\subset\cQ^{T-(C_\alpha h_0)^2}$,
\begin{equation}\label{eqn-210809-0715}
	\int_\cQ u1_Q = 0.
\end{equation}
For this, we solve the dual problem with zero terminal value
\begin{equation}\label{eqn-210809-0453}
	\begin{cases}
		v_t + \Delta v = 1_Q  & \text{in }\, \cQ^{T},\\
		\frac{\p v}{\p \vec{n}} = 0  & \text{on }\, \cN^T,\\
		v = 0 & \text{on }\, \cD^T,\\
		v = 0 & \text{on }\, \{t=T\}\times\Omega.
	\end{cases}
\end{equation}
Since $\Lambda$ satisfies Assumption \ref{ass-0301-2356}, the existence of a solution $v\in\cH^1_2(\cQ^{T})$ is guaranteed by \cite[Proposition 3.3]{CDL21}. By the uniqueness of weak solutions,
	\begin{equation*}
		v=0\quad\text{for}\,\,t\geq T-(C_\alpha h_0)^2.
	\end{equation*}
Furthermore, by the De Giorgi-Nash-Moser estimate, there exists some $\beta\in(0,1)$ such that $v\in C^{\beta/2,\beta}(\cQ^T\cup\p_l\cQ^T)$. Noting that till this point,  only Assumptions \ref{ass-0301-2356} and \ref{ass-210809-0731} are needed.

The major difficulty here is that we cannot directly apply $u$ as a test function for \eqref{eqn-210809-0453}, since it is not regular enough near $\p_l\cQ$. For this, we ``regularize'' $u$ by taking a cut-off near $\cD$ and translating near $\cN$. For this, we introduce a cut-off function in the following lemma.
\begin{lemma}\label{lem-211105-0444}
	Let $h\in(0,h_0)$, $S\in(0,\infty)$, and $\cQ^S$, $\cD^S$, $\cN^S$, and $\Lambda^S$ be the truncated sets defined as in \eqref{eqn-211105-0426}.
 There exists a function $\eta\in C^{1/2,1}_{loc}(\bR^{1+d}\setminus \overline{\Lambda^S})\cap C^\infty(\bR^{1+d}\setminus \overline{\p_l\cQ^S})$, such that
 \begin{enumerate}
 	\item $\overline{\supp(\eta)}\cap \cD^S=\emptyset$. Outside the $2h$-neighborhood of $\cD^S$, $\eta=1$. On $\cN^S$, $\eta=1$.
 	\item For $\delta_S(X):=\dist(X,\Lambda^S)$:
 	\begin{align}
 		&d(X)\approx \delta_S(X)\quad \forall X\in\supp(D\eta)\cap\{\delta_S(X)<h\}\cap\cQ^S,\label{eqn-210927-1003} \\
 		&d(X)\approx \operatorname{dist}(X,\cD^S)\quad \forall X\in \supp(D\eta)\cap\cQ^S.\label{eqn-210925-0429}
 	\end{align}
 	\item For any $X\in\supp(D\eta)\cap\cQ^S$,
 	\begin{equation}
 		\label{eq3.53}
 		\begin{split}
 			&|D\eta(X)|\leq C\max\{1/h, 1/\delta_S(X)\},
 			\\
 			&|D^2\eta(X)| + |\p_t\eta(D\eta)|\leq C\max\{1/h^2, 1/\delta_S(X)^2\}.
 		\end{split}	
 	\end{equation}
 \end{enumerate}
\end{lemma}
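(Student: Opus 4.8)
The plan is to construct $\eta$ explicitly as a product (or maximum/minimum) of two elementary cut-offs: one that vanishes in a neighborhood of $\cD^S$ and one that is smoothed using the distance to $\Lambda^S$ so as to remain $C^{1/2,1}$ up to $\Lambda^S$ but smooth away from the whole lateral boundary. First I would fix a standard scalar cut-off $\zeta\in C^\infty(\bR)$ with $\zeta(s)=0$ for $s\le 1$ and $\zeta(s)=1$ for $s\ge 2$, and set, as a first attempt, $\eta_0(X):=\zeta\big(\dist(X,\cD^S)/h\big)$. This already gives property (1): $\overline{\supp(\eta_0)}\cap\cD^S=\emptyset$, $\eta_0\equiv 1$ outside the $2h$-neighborhood of $\cD^S$, and $\eta_0\equiv 1$ on $\cN^S$ since $\cN^S$ is open and disjoint from $\overline{\cD^S}$ except along $\Lambda^S$, where one checks $\dist(X,\cD^S)$ stays bounded below by a fixed multiple of $h$ on $\cN^S$ away from $\Lambda^S$; near $\Lambda^S$ one instead uses that $\dist(X,\cD^S)\to 0$ only as $X\to\overline{\cD^S}$. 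The function $\dist(\cdot,\cD^S)$ is Lipschitz in $x$ and, by the time-continuity Assumption~\ref{ass-0301-2356}, parabolic-Lipschitz of exponent $1/2$ in $t$, so $\eta_0\in C^{1/2,1}_{loc}(\bR^{1+d}\setminus\overline{\Lambda^S})$; it fails to be smooth only on $\p_l\cQ^S$ and on $\{\dist(X,\cD^S)=h\text{ or }2h\}$, so to get the $C^\infty(\bR^{1+d}\setminus\overline{\p_l\cQ^S})$ regularity I would mollify $\eta_0$ at scale comparable to $\min\{h,\delta_S(X)\}/100$ in the interior using a partition of unity, exactly as in the analogous elliptic construction in \cite{DL20}.

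Next I would verify (2). For $X\in\supp(D\eta)$ we have $h\le\dist(X,\cD^S)\le 2h$ (up to the mollification scale), hence $X$ sits in the region where the distance to $\cD^S$ is comparable to $h$. The estimate $d(X)\approx\dist(X,\cD^S)$ on $\supp(D\eta)\cap\cQ^S$, i.e.\ \eqref{eqn-210925-0429}, is a geometric fact about Lipschitz cylinders: for $X$ at parabolic distance $\sim h$ from $\cD^S$ but inside $\cQ^S$, its distance to the lateral boundary is realized near $\cD^S$ (the nearest boundary point cannot be on $\cN^S$ side unless $X$ is close to $\Lambda^S$, and if it is close to $\Lambda^S$ then the nearest point of $\cD^S$ and the nearest point of $\p_l\cQ^S$ are both within $O(h)$, again giving comparability), using the corkscrew condition Assumption~\ref{ass-210809-0731} to rule out $\cD^S$ being too thin near $\Lambda^S$. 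Then \eqref{eqn-210927-1003} follows: when additionally $\delta_S(X)<h$, the nearest boundary point lies within $O(h)$ of $\Lambda^S$, and combining with $d(X)\approx\dist(X,\cD^S)$ and $\dist(X,\cD^S)\le\dist(X,\Lambda^S)+\diam(\text{piece of }\cD^S\text{ near }\Lambda^S)$ one extracts $d(X)\approx\delta_S(X)$; here again the corkscrew/Ahlfors-type control from Assumptions~\ref{ass-210809-0731} and \ref{ass-210730-0540} is what prevents degeneration.

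For (3), the derivative bounds \eqref{eq3.53} are immediate from the chain rule once the cut-off is built at the right scale: on $\supp(D\eta)$ the mollification scale is $\sim\min\{h,\delta_S(X)\}$, so $|D\eta|\lesssim 1/\min\{h,\delta_S(X)\}=\max\{1/h,1/\delta_S(X)\}$ and similarly $|D^2\eta|,\ |\p_t\eta\,D\eta|\lesssim\max\{1/h^2,1/\delta_S(X)^2\}$, using that $\p_t$ counts as two spatial derivatives in the parabolic scaling and that $\eta_0$ is parabolic-Lipschitz in $t$. The main obstacle, I expect, is not any single estimate but the careful bookkeeping of \emph{where} $\eta_0$ can be mollified while preserving the three stated support/regularity properties simultaneously — in particular making the transition region $\{h\le\dist(\cdot,\cD^S)\le 2h\}$ interact correctly with the set $\{\delta_S<h\}$ so that \eqref{eqn-210927-1003} holds with uniform constants, and ensuring the $C^{1/2,1}$ regularity survives up to (but not including) $\Lambda^S$ where $\delta_S\to 0$ and the mollification scale shrinks to zero. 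This is where Assumption~\ref{ass-0301-2356} (time-continuity of the decomposition), Assumption~\ref{ass-210809-0731} (corkscrew), and the Lipschitz character of $\p\Omega$ all enter, and I would organize the proof so that each geometric claim is reduced to one of these. I would model the whole argument on the corresponding construction for the Laplacian in \cite[Section 3]{DL20}, noting that the only genuinely new ingredient is handling the parabolic distance and the $t$-dependence of $\cD^S,\cN^S$.
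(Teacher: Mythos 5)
Your construction $\eta_0(X)=\zeta\bigl(\dist(X,\cD^S)/h\bigr)$ fails to satisfy property (1), and the failure is not repairable by mollification. The issue is exactly at the point you try to wave away: if $X\in\cN^S$ with $\dist(X,\Lambda^S)$ small, then since $\Lambda^S\subset\overline{\cD^S}$ we have $\dist(X,\cD^S)\le\dist(X,\Lambda^S)$, which can be much smaller than $h$; hence $\eta_0(X)=0$ at such points, whereas the lemma demands $\eta\equiv 1$ on $\cN^S$. Your sentence ``near $\Lambda^S$ one instead uses that $\dist(X,\cD^S)\to 0$ only as $X\to\overline{\cD^S}$'' is literally true but does not help: a point of $\cN^S$ approaching $\Lambda^S$ \emph{is} approaching $\overline{\cD^S}$. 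Mollifying $\eta_0$ cannot convert a region of value $0$ into a region of value $1$, so no choice of mollification scale fixes this. Put differently: for a function of $\dist(\cdot,\cD^S)$ alone, the level set $\{\dist(\cdot,\cD^S)\le h\}$ necessarily contains a piece of $\cN^S$ hugging $\Lambda^S$, and this must be excised from $\supp(1-\eta)$ by some mechanism that distinguishes the $\cN$ side from the $\cD$ side even at scales $\ll h$.

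The paper's construction does exactly this: it introduces parabolic regularized distances $\rho_{\cD^S}$ and $\rho_{\cN^S}$ (Whitney-type, as in Stein) and sets $\eta=1-\varphi(\rho_{\cD^S}/\rho_{\cN^S})\,\varphi(\rho_{\cD^S}/h)$. The extra factor $\varphi(\rho_{\cD^S}/\rho_{\cN^S})$ — a cut-off in the \emph{ratio} of the two distances — vanishes on $\cN^S$ (where $\rho_{\cN^S}=0$ and $\rho_{\cD^S}>0$, so the argument is $+\infty$), which forces $\eta=1$ there regardless of how close $X$ is to $\Lambda^S$. That ratio term is precisely the ingredient your construction is missing; it is also what produces the geometry in Figure~3 and explains why $\supp(D\eta)$ splits into a piece with $\rho_{\cD^S}\approx h$ (giving \eqref{eqn-210925-0429} with $\dist(X,\cD^S)\sim h$) and a piece with $\rho_{\cD^S}\approx\rho_{\cN^S}\lesssim h$ near $\Lambda^S$ (giving \eqref{eqn-210927-1003}). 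A related but minor point: by using the Stein regularized distance from the start, the paper gets $C^\infty$ regularity and the derivative bounds $|\partial^{\alpha}_x\rho|\lesssim\rho^{1-|\alpha|}$, $|\partial^k_t\rho|\lesssim\rho^{1-2k}$ for free, so the separate mollification-with-variable-scale step you propose is not needed.
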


\begin{figure}
	\begin{center}
\begin{tikzpicture}
	\draw [<->,thick] (0,3) node (yaxis) [above] {$x^1$} |- (8,0) node (xaxis) [right] {$x'$};
	\draw plot [smooth] coordinates { (5,0) (4,1) (0,1)};
	\draw plot [smooth] coordinates { (5,0) (6.4,1.4) (6,2) (0,2)};
	\node at (2,0.5) {$\eta=0$};
	\node at (4,2.5) {$\eta=1$};
	\node at (3,1.5) {$\supp(D\eta)$};
	\node[left] at (0,1) {$h$};
	\node[left] at (0,2) {$2h$};
	\node[below] at (3,0) {$\cD$};
	\node[below] at (5,0) {$\Lambda$};
	\node[below] at (7,0) {$\cN$};
\end{tikzpicture}
\caption{Cut-off $\eta$ for a fixed $t$} \label{cut-off figure}
\end{center}
\end{figure}
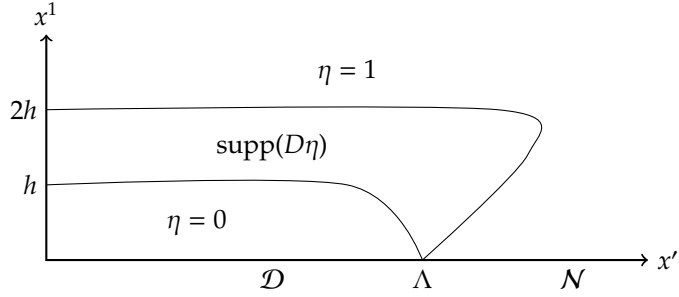
See Figure \ref{cut-off figure}. The proof of Lemma \ref{lem-211105-0444} is by using the regularized distance, which can be found in Appendix \ref{app-210824-1212}. In the following, we take the function $\eta$ in Lemma \ref{lem-211105-0444} with $S=T-(C_\alpha h_0)^2$. For simplicity, we denote
\begin{equation*}
	\delta_{T,h_0}(X):=\delta_{T-(C_\alpha h_0)^2}(X) (=\dist(X,\Lambda^{T-(C_\alpha h_0)^2}))\quad\text{and}\,\,\cQ^{T,h_0}:=\cQ^{T-(C_\alpha h_0)^2}.
\end{equation*}
Similarly, we denote $\cD^{T,h_0}, \cN^{T,h_0}$, and $\Lambda^{T,h_0}$.

Next, we take a partition of unity
\begin{equation*}
	1_\Omega = \sum_{k=0}^l \chi_k(x),
\end{equation*}
such that $\dist(\supp(\chi_0),\p\Omega)>0$, and for each $k\geq 1$, after a rotation,
\begin{equation}\label{eqn-210810-1209}
	\Omega\cap\supp(\chi_k) = \{x^1>\psi(x')\}\cap\supp(\chi_k),
\end{equation}
where $\psi$ is a Lipschitz function.
Now for each $k\geq 1$, we define the translation
\begin{equation*}
	\tau_h^{(k)}(t, x^1,x^2,\ldots,x^d) = (t,x^1+h,x^2,\ldots,x^d).
\end{equation*}
Then, for $h\in(0,h_0)$, $$w:=\big((u\chi_k)\circ\tau_h^{(k)}\big)\eta \in C^\infty((\cQ^{T,h_0}\cup\p_l\cQ^{T,h_0})\setminus\Lambda^{T,h_0}).$$
Furthermore, $w$ satisfies
\begin{equation*}
	\begin{cases}
		(\p_t - \Delta) w = \big((\p_t-\Delta)(u\chi_k)\big)\circ\tau_h^{(k)} \eta + f_h^{(k)} & \text{in }\, \cQ^{T,h_0},\\
		w = 0 & \text{on }\, \cD^{T,h_0},\\
		w = 0 & \text{on }\, \{t=0\}\times\Omega,
	\end{cases}
\end{equation*}
where
\begin{equation*}
	f_h^{(k)} = (u\chi_k)\circ\tau_h^{(k)}\p_t\eta - 2\big(\nabla(u\chi_k)\big)\circ\tau_h^{(k)} \cdot\nabla \eta -(u\chi_k)\circ\tau_h^{(k)}\Delta\eta.
\end{equation*}
We will prove
\begin{align}
	\int_{\cQ^T} w 1_Q
	&=
	-
	\int_{\cQ^T} \big(2(\nabla u\cdot\nabla \chi_k)\circ\tau_h^{(k)}\eta v + (u\Delta\chi_k)\circ\tau_h^{(k)}\eta v\big)
	+
	\int_{\cQ^T} f_h^{(k)} v
	+
	\int_{\cN^T}\frac{\p w}{\p\vec{n}}v \label{eqn-210810-1234}
	\\
&:={\rm I}+{\rm II}+{\rm III}\notag\\
&
	\rightarrow  -\int_{\cQ^T} (2\nabla u\cdot\nabla \chi_k + u\Delta\chi_k)v + \int_{\cN^T} \frac{\p\chi_k}{\p\vec{n}} uv,\quad\text{as}\,\,h\rightarrow 0.\label{eqn-210817-0618}
\end{align}
Note that the integrals above can be replaced with the integrals on $\cQ^{T,h_0}$ since $1_Q$ and $v$ vanishes in $\cQ^T\setminus \cQ^{T,h_0}$.
In the following, we will focus on proving \eqref{eqn-210817-0618}, during which the integrability of the right-hand side of \eqref{eqn-210810-1234} will be proved. From this, \eqref{eqn-210810-1234} can be proved by taking usual cut-off functions which are zero near $\Lambda$.

We prove \eqref{eqn-210817-0618} by computing term by term. More precisely, we show as $h\rightarrow 0$,
\begin{align}
	{\rm I} & \rightarrow -\int_{\cQ^T} (2\nabla u\cdot\nabla \chi_k + u\Delta\chi_k)v,\label{eqn-210817-0614-1}
	\\
	{\rm II} & \rightarrow 0,\label{eqn-210817-0614-2}
	\\
	{\rm III} & \rightarrow\int_{\cN^T} \frac{\p\chi_k}{\p\vec{n}} uv.\label{eqn-210817-0614-3}
\end{align}
To make the limit
\begin{equation*}
		\tau_h^{(k)}(X) \rightarrow X\in\p_l\cQ^T
\end{equation*}
to be ``non-tangential'', we choose $\alpha$ (the aperture of non-tangential cones) large enough such that for any $X=(t,x)$ with $x \in \supp(\chi_k)\cap\p\Omega$, and $t\in(0,T)$, under the coordinate system in \eqref{eqn-210810-1209},
\begin{equation}\label{eqn-210816-0508}
	 (t,y^1,x')\in\Gamma(X)\quad\forall y^1\in(\psi(x'),\psi(x')+2h).
\end{equation}
As a preparation, we first prove that $\vec{N}(u)\in L_1(\p_l\cQ^T)$ and
	\begin{equation}\label{eqn-210920-1223}
		\|\vec{N}(u)\|_{L_1(\p_l\cQ^T)}\leq C\|\vec{N}(Du)\|_{L_1(\p_l\cQ^T)}.
	\end{equation}
Noting that the non-tangential limit of $u$ exists at $\p_l\cQ^T$ and $u=0$ on $\cD^T$, by the Sobolev-Poincar\'e inequality on $\p_l\cQ^T$ and H\"older's inequality,
\begin{equation}\label{eqn-210920-1218}
	\norm{u}_{L_1(\p_l\cQ^T)} \leq C\norm{u}_{L_{(d+1)/d}(\p_l\cQ^T)} \leq C\norm{Du}_{L_1(\p_l\cQ^T)} \leq C\norm{\vec{N}(Du)}_{L_1(\p_l\cQ^T)}.
\end{equation}
By the fundamental theorem of calculus, for any $X\in\p_l\cQ$ and $Y\in\Gamma(X)$, we have
\begin{equation*}
	|u(Y)-u(X)| \leq C\vec{N}(Du)(X),
\end{equation*}
from which,
\begin{equation*}
	\vec{N}(u)(X) \leq |u(X)| + C\vec{N}(Du)(X).
\end{equation*}
Integrating in $X\in\p_l\cQ^T$ and noting \eqref{eqn-210920-1218}, we reach \eqref{eqn-210920-1223}.

Now we prove \eqref{eqn-210817-0614-1}. Noting \eqref{eqn-210816-0508} and $\chi_k\in C^\infty$, we have
\begin{align*}
	&\sup_h\Big(\big|(\nabla u\cdot\nabla\chi_k)\circ\tau_h^{(k)}(X)\big| + \big|(u\Delta\chi_k)\circ\tau_h^{(k)}(X)\big|\Big) \\
&\leq C(|\vec{N}(Du)(t,\psi(x'),x')| + |\vec{N}(u)(t,\psi(x'),x')|),
\end{align*}
which implies
\begin{equation}\label{eqn-211114-1105}
	\sup_h\Big(\big|(\nabla u\cdot\nabla\chi_k)\circ\tau_h^{(k)}(X)\big| + \big|(u\Delta\chi_k)\circ\tau_h^{(k)}(X)\big|\Big) \in L_1^{t,x^2,\ldots x^d}L_\infty^{x^1}.
\end{equation}
Also noting $0\leq \eta\uparrow 1$ as $h\to 0$ and $v\in L_\infty(\cQ^T\cup\p_l\cQ^T)$, by the dominated convergence theorem, \eqref{eqn-210817-0614-1} can be obtained by passing $h\rightarrow 0$ under the integration.

Similarly, since $\eta=1$ on $\cN^{T,h_0}$ and in view of \eqref{eqn-210816-0508},
\begin{align}
	\lim_{h\rightarrow 0} {\rm III}
	&=
	\lim_{h\rightarrow 0}\int_{\cN^T} \frac{\p((u \chi_k)\circ\tau_h^{(k)})}{\p\vec{n}}\eta v
	=
	\lim_{h\rightarrow 0}\int_{\cN^T} \big(\frac{\p (u \chi_k)}{\p\vec{n}}\big)\circ\tau_h^{(k)}\eta v\nonumber
	\\&= \int_{\cN^T} \lim_{h\rightarrow 0} \big(\frac{\p(u \chi_k)}{\p\vec{n}}\big)\circ\tau_h^{(k)}\eta v\label{eqn-210809-0651}
	\\&= \int_{\cN^T} \frac{\p\chi_k}{\p\vec{n}} uv,\nonumber
\end{align}
which proves \eqref{eqn-210817-0614-3}.
Here $\vec{n}$ is the unit outer normal at the boundary, extended  in a parallel way into the domain, i.e.,
\begin{equation*}
	\vec{n}\vert_{(t,x^1,x')} = \vec{n}\vert_{(t,\psi(x'),x')}.
\end{equation*}
In \eqref{eqn-210809-0651} we interchange the integration and the limit due to the dominated convergence theorem, noting
\begin{equation*}
	\sup_h\Big|\frac{\p (u\chi_k)}{\p\vec{n}}\circ\tau_h^{(k)}\Big|\in L_1(\cN^T),\quad 0\leq \eta\uparrow 1,\quad\text{and}\,\,v\in L_\infty(\overline{\cQ^T}).
\end{equation*}
We are left to show \eqref{eqn-210817-0614-2}.
Since $v=0$ on $\cD^{T}$, by the De Giorgi-Nash-Moser estimate, for any $X\in \cQ^T$, we have
\begin{equation*}
	|v(X)| \leq C\dist(X,\cD^{T})^\beta \leq C\dist(X,\cD^{T,h_0})^\beta.
\end{equation*}
By \eqref{eqn-210927-1003}, \eqref{eqn-210925-0429}, and \eqref{eq3.53} with $S=T-(C_\alpha h_0)^2$,
\begin{align}
	|(v\nabla\eta)(X)| \leq C \dist(X,\cD^{T,h_0})^{\beta-1} \approx C d(X)^{\beta-1}\quad\forall X\in \cQ^T.
                    \label{eqn-210815-1213}
\end{align}
From this, noting $\chi_k\in C^\infty$, by the Minkowski inequality, we have
\begin{align}
	&\big|\int_{\cQ^T}  - 2\nabla(u\chi_k)\circ\tau_h^{(k)}\cdot\nabla \eta v\big|\notag\\
	&\leq
	C \|\nabla(u\chi_k)\circ\tau_h^{(k)}\|_{L_1^{t,x^2,\ldots,x^d}L_\infty^{x^1}} \|v\nabla\eta\|_{L_1^{x^1}L_\infty^{t,x^2,\ldots,x^d}} \nonumber
	\\&\leq
	C\big(\|\vec{N}(Du)\|_{L_1(\p_l\cQ^T)} + \|\vec{N}(u)\|_{L_1(\p_l\cQ^T)}\big)\int_0^{2h} r^{\beta-1}\,dr\label{eqn-21081-0721}
	\\&\leq
	C\big(\|\vec{N}(Du)\|_{L_1(\p_l\cQ^T)} + \|\vec{N}(u)\|_{L_1(\p_l\cQ^T)}\big) h^\beta \rightarrow 0.\notag
\end{align}
Here in \eqref{eqn-21081-0721}, we used \eqref{eqn-210816-0508} and \eqref{eqn-210815-1213} combined with the fact that for $X=(t,x^1,\ldots,x^d)$ with $x\in\supp(\chi_k)$,
$$d(X) \approx x^1-\psi(x^2,\ldots,x^d))\in(0,2h)\quad\text{on}\,\,\supp(D\eta)\cap\cQ^{T,h_0}.$$

Estimating terms involving $\p_t\eta$ and $\Delta\eta$ requires more dedicate estimates near $\Lambda$. In the sequel, let us denote
\begin{equation*}
	\Lambda^{T,h_0}_h:=\{X\in\cQ^{T,h_0}:\dist(X, \Lambda^{T,h_0})<h\}.
\end{equation*}
Similar to \eqref{eqn-210815-1213}, using \eqref{eqn-210927-1003} and \eqref{eq3.53},
we have
\begin{equation}\label{eqn-210817-0819}
	|(v \p_t\eta)(X)| + |(v D^2\eta)(X)| \leq Ch^{\beta-2}\quad\text{on}\,\,(\supp(D\eta)\setminus\Lambda^{T,h_0}_h)\cap\cQ^{T,h_0}
\end{equation}
and
\begin{equation}\label{eqn-210817-0815}
	|(v \p_t\eta)(X)| + |(v D^2\eta)(X)| \leq C\delta_{T, h_0}(X)^{\beta-2}\quad\text{on}\,\,(\supp(D\eta)\cap\Lambda^{T,h_0}_h)\cap\cQ^{T,h_0}.
\end{equation}
For any point
\begin{equation}\label{eqn-211017-1002-1}
	X=(t,x)\in\supp(D\eta)\cap\Lambda^{T,h_0}_h \,\, \text{with}\,\,x\in\supp(\chi_k),
\end{equation}
denote
\begin{equation}\label{eqn-211017-1002-2}
		\widetilde{X}\in\Lambda^{T,h_0}\,\,\text{to be the point with}\,\,|X-\widetilde X| =  \delta_{T, h_0}(X).
	\end{equation}
Now we denote
$$
\widetilde \cQ^T=\cQ\cap\{t\in (-T,T)\},\quad
\widetilde \cD^T=\cD\cap\{t\in (-T,T)\},\quad
\widetilde \cN^T=\cN\cap\{t\in (-T,T)\},
$$
and take the zero extension of $u$ for $t\in (-T,0)$. We redefine the non-tangential maximal function by using parabolic non-tangential cones restricted to $\widetilde \cQ^T$. It is easily seen that for $t\ge 0$, $\vec{N}(Du)(t,\cdot)$ does not change values and for $t<0$,
\begin{equation}
                    \label{eq3.55}
\vec{N}(Du)(t,\cdot)\le \vec{N}(Du)(-t,\cdot).
\end{equation}
By Assumption \ref{ass-210809-0731} and the fact $\Delta_{C_\alpha h}(\widetilde{X})\subset\p_l\widetilde\cQ^T$, we have
\begin{equation}\label{eqn-211017-0959}
	|\Delta_{C_\alpha h}(\widetilde{X})\cap\widetilde\cD^T| \geq Ch^{d+1}.
\end{equation}
Similar to \eqref{eqn-210816-0508},  also noting $\delta_{T, h_0}(X)\approx d(X)$, we can make
$X\,\,\text{and}\,\, \tau_h^{(k)}(X)\in\Gamma(\widetilde{X})$
by choosing $\alpha$ large enough.
Furthermore, noting that
\begin{equation*}
	d(\tau_h^{(k)}(X))\approx |\tau_h^{(k)}(X)-\widetilde{X}|\approx h,
\end{equation*}
by the triangle inequality, there exists some (small) constant $C_\alpha$ such that
\begin{equation}\label{eqn-211101-0708}
	\tau_h^{(k)}(X)\in\Gamma(Y)\quad\forall Y\in\Delta_{C_\alpha h}(\widetilde{X}).
\end{equation}
Since $u=0$ on $\cD^T$, 
by the fundamental theorem of calculus, \eqref{eqn-211017-0959}, and \eqref{eq3.55},
\begin{equation}\label{eqn-210816-0717}
	(|u|\chi_k)\circ\tau_h^{(k)}(X)
	\leq
	Ch\fint_{\Delta_{C_\alpha h}(\widetilde{X})\cap\widetilde\cD^T}|\vec{N}(Du)|
\le Ch^{-d} \int_{\Delta_{C_\alpha h}(\widetilde{X})\cap\p_l\cQ^T}|\vec{N}(Du)|.
\end{equation}
By taking a large number $C$ (for example, $C=\max\{10C_\alpha,1\}$), we cover $\Lambda^{T,h_0}$ with finitely many surface cubes $\Delta_{Ch}(X_i)$, such that for any $X$ and $\widetilde{X}$  in \eqref{eqn-211017-1002-1} and \eqref{eqn-211017-1002-2}, we have $\Delta_{C_\alpha h}(\widetilde{X})\subset \Delta_{Ch}(X_i)$ for some $i$, and $|X_i-X_j|\ge Ch/2$ for different $i$ and $j$. From \eqref{eqn-210816-0717}, \eqref{eqn-210817-0815}, and the fact $\delta(X)\leq\delta_{T, h_0}(X)$, we have
\begin{align}
	\big|\int_{\cQ^T \cap\Lambda^{T, h_0}_h} (u\chi_k)\circ\tau_h^{(k)}\Delta\eta v\big|
	&\leq
	\sum_i Ch^{-d}\int_{\Delta_{Ch}(X_i)\cap\p_l\cQ^T}\vec{N}(Du)\int_{\cQ_{Ch}(X_i)
\cap\cQ^T}\delta_{T, h_0}(X)^{\beta-2} \nonumber
	\\&\leq
	Ch^{-d}\norm{\vec{N}(Du)}_{L_1(\p_l\cQ^T)} \sup_i \int_{\cQ_{Ch}(X_i)}\delta(X)^{\beta-2} \nonumber
	\\&\le
	Ch^{-d}\norm{\vec{N}(Du)}_{L_1(\p_l\cQ^T)} h^{d+2+\beta-2}\label{eqn-210920-0603}
	\\&\le
	Ch^\beta \norm{\vec{N}(Du)}_{L_1(\p_l\cQ^T)}\rightarrow 0.\label{eqn-210817-0130-1}
\end{align}
Here, Assumption \ref{ass-210730-0540} ($\epsi_0$) with $\epsi_0<\beta$ was needed in \eqref{eqn-210920-0603}.
The estimate away from $\Lambda$ is simpler: from \eqref{eqn-210817-0819},
\begin{align}
	\big|\int_{\cQ^T\setminus\Lambda^{T , h_0}_h} (u\chi_k)\circ\tau_h^{(k)}\Delta\eta v\big|
	&\leq
	\|\vec{N}(Du)\|_{L_1(\p_l\cQ^T)}\int_0^h C(x^1+h)h^{\beta-2}\,dx^1\nonumber
	\\&\leq
	C\norm{\vec{N}(Du)}_{L_1(\p_l\cQ^T)}h^{\beta}\rightarrow 0.\label{eqn-210817-0930}
\end{align}

From \eqref{eqn-210817-0130-1} and \eqref{eqn-210817-0930},
\begin{equation*}
	\int_{\cQ^T} (u\chi_k)\circ\tau_h^{(k)}\Delta\eta v \rightarrow 0.
\end{equation*}
By the same reasoning,
\begin{equation*}
	\int_{\cQ^T} (u\chi_k)\circ\tau_h^{(k)}\p_t\eta v \rightarrow 0.
\end{equation*}
Hence \eqref{eqn-210817-0614-2}, and furthermore \eqref{eqn-210810-1234}-\eqref{eqn-210817-0618} are proved.
We turn to the proof of \eqref{eqn-210809-0715}. For this, since similar to \eqref{eqn-211114-1105},
\begin{equation*}
	\sup_h|(u\chi_k)\circ\tau_h^{(k)}\eta 1_Q|\in L^{t,x^2,\ldots,x^d}_1L^{x^1}_\infty,
\end{equation*}
we can apply the dominated convergence theorem and \eqref{eqn-210810-1234}-\eqref{eqn-210817-0618} to obtain: for any $k\in\{1,\ldots,l\}$,
\begin{align}
	\int_{\cQ^T} u\chi_k 1_Q
	&=
	\lim_{h\rightarrow 0}\int_{\cQ^T} (u\chi_k)\circ\tau_h^{(k)}\eta 1_Q \nonumber
	\\&=
	-\int_{\cQ^T} (2\nabla u\cdot\nabla \chi_k + u\Delta\chi_k)v  +\int_{\cN^T} \frac{\p\chi_k}{\p\vec{n}} uv.\label{eqn-210810-1233}
\end{align}
For $k=0$, since $u\chi_0\in C_c^\infty([0,T]\times \Omega)$ and $u\chi_0=0$ when $t=0$, we can test \eqref{eqn-210809-0453} by $u\chi_0$ to get
\begin{equation}\label{eqn-210810-1236}
	\int_{\cQ^T} u\chi_0 1_Q = -\int_{\cQ^T} (2\nabla u\cdot\nabla \chi_0 + u\Delta\chi_0)v.
\end{equation}
Adding up \eqref{eqn-210810-1233} and \eqref{eqn-210810-1236}, noting $\sum_k \chi_k=1$, we reach \eqref{eqn-210809-0715}. Since $h_0$ can be arbitrarily small, $u$ has to be zero, which proves the uniqueness.

\section{Higher regularity}\label{sec-211018-0509}

\subsection{\texorpdfstring{$L_{q}$}{Lq} estimate for the nontangential maximal function of \texorpdfstring{$Du$}{Du}}\label{sec-210901-0632}
Under additional assumptions on $\p\cQ$ and $\Lambda$ as in \cite{CDL21}, we can obtain the $L_q$ estimate when $q\in (1,\frac{m+2}{m+1})$. In this section, we prove Theorem \ref{thm-211009-0913}.

The following interpolation lemma is a parabolic  analog of \cite[Theorem~3.2]{Shen}, which in turn is in spirit of a result in \cite{CP}. As in \cite{TOB} and \cite{OB13}, compared to the original version in \cite[Theorem~3.2]{Shen}, here we need to replace $|f|$ on the right-hand sides with $|f|^s$ for some $s>1$ since we start from the $L_1$-solvability with data in the Hardy space.

In this section, for a surface cube $\Delta=\Delta_r(X)$ and $l>0$, we denote $l\Delta=\Delta_{lr}(X)$ to be a dilated cube.
\begin{lemma}\label{lem-210827-0402}
	Let $\Delta_0$ be a (parabolic) surface cube such that $\text{diam}(\Delta)< R_0$, $s,q$ be two  numbers satisfying $1<s<q$, and $F, g$ be two functions defined on $\Delta_0$. Suppose that for any surface cube $\Delta\subset \Delta_0/4$, we can find $F_\Delta$ and $R_\Delta$ defined on it, such that
	\begin{align}
		|F| &\leq C_0\Big(|F_\Delta| + |R_\Delta|\Big)\quad\text{on}\,\,\Delta,\label{eqn-210827-0358-1}
		\\
		\big(\fint_{\Delta} |R_\Delta|^q\big)^{1/q} &\leq C_1 \left(\fint_{16\Delta} |F| + \sup_{\Delta'\supset 4\Delta}\big(\fint_{\Delta'}|f|^{s}\big)^{1/s} \right),\label{eqn-210827-0358-2}
		\\
		\fint_\Delta |F_\Delta| &\leq C_2 \sup_{\Delta'\supset 4\Delta}\big(\fint_{\Delta'} |f|^{s}\big)^{1/s},\label{eqn-210827-0358-3}
	\end{align}
where $C_0,C_1,C_2\ge 0$ are constants.
Then for any $p\in(s,q)$, if $f\in L_p(\Delta_0)$, then we also have $F\in L_p(\Delta_0/4)$ with
\begin{equation*}
	\big(\fint_{\Delta_0/4} |F|^p\big)^{1/p} \leq C\left(\fint_{\Delta_0} |F| + \big(\fint_{\Delta_0}|f|^p\big)^{1/p}\right).
\end{equation*}
Here $C=C(d,M,s,p,q,C_0,C_1,C_2)$.
\end{lemma}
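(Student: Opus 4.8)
The plan is to follow the Calder\'on--Zygmund-type stopping-time argument from \cite[Theorem~3.2]{Shen}, adapted so that the right-hand side carries $|f|^s$ rather than $|f|$ in a Lebesgue average, exactly as in \cite{TOB,OB13}. The key quantity to estimate is the distribution function of the (local, dyadic) maximal function of $F$, and the argument runs at the level of a fixed reference cube $\Delta_0$. By the usual normalization (dividing $F$ and $f$ by appropriate constants, or working with $\lambda$ larger than a multiple of $\fint_{\Delta_0}|F|+(\fint_{\Delta_0}|f|^p)^{1/p}$) it suffices to prove a good-$\lambda$ inequality of the form
\begin{equation*}
	\bigl|\{X\in\tfrac14\Delta_0 : \cM_{\Delta_0}(F)(X)>A\lambda,\ \cM_{\Delta_0}(|f|^s)(X)^{1/s}\le \gamma\lambda\}\bigr|
	\le C\gamma^{q}\,\bigl|\{X\in \tfrac14\Delta_0: \cM_{\Delta_0}(F)(X)>\lambda\}\bigr|
\end{equation*}
for suitable absolute $A$ and for all small $\gamma$, where $\cM_{\Delta_0}$ is the (parabolic, dyadic) restricted maximal operator on $\Delta_0$. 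Integrating this in $\lambda$ against $p\lambda^{p-1}\,d\lambda$, using that $p>s$ so that $\cM_{\Delta_0}(|f|^s)^{1/s}$ is bounded on $L_p$, and choosing $\gamma$ small enough to absorb, yields the claimed $L_p$ bound; the hypothesis $p<q$ is what makes the exponent $q$ on $\gamma$ beat the $p-1$ power coming from the layer-cake integral.

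The good-$\lambda$ inequality itself is proved by a Vitali/Whitney selection on the open set $\{\cM_{\Delta_0}(F)>\lambda\}$ inside $\frac14\Delta_0$: decompose it into maximal dyadic subcubes $\{\Delta_j\}$, and on each such $\Delta_j$ one has, by maximality of the parent, $\fint_{c\Delta_j}|F|\lesssim \lambda$ for a fixed dilation constant $c$. On a cube $\Delta_j$ that also contains a point where $\cM_{\Delta_0}(|f|^s)^{1/s}\le\gamma\lambda$, apply \eqref{eqn-210827-0358-1}, \eqref{eqn-210827-0358-2}, \eqref{eqn-210827-0358-3} with $\Delta=\Delta_j$: split $F\le C_0(|F_{\Delta_j}|+|R_{\Delta_j}|)$, estimate the measure of $\{|R_{\Delta_j}|>\text{const}\cdot\lambda\}$ by Chebyshev and \eqref{eqn-210827-0358-2} (whose right-hand side is $\lesssim \lambda+\gamma\lambda\lesssim\lambda$), getting a gain of $\gamma^q$ after using \eqref{eqn-210827-0358-3} to bound the "low" piece $\fint|F_{\Delta_j}|\lesssim\gamma\lambda$ and hence forcing the superlevel set to come entirely from $R_{\Delta_j}$; summing over $j$ and using $\sum_j|\Delta_j|\le |\{\cM_{\Delta_0}(F)>\lambda\}|$ closes the estimate. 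Here one must be a little careful to throw away at the outset those $\Delta_j$ on which $\cM_{\Delta_0}(|f|^s)^{1/s}>\gamma\lambda$ everywhere (their total measure is controlled directly by the $L_p$ norm of $f$ via weak-$(p/s)$ boundedness applied to $|f|^s$, contributing a harmless term), and to choose $A$ large enough that the dilated cubes $16\Delta_j$, $4\Delta_j$ appearing in \eqref{eqn-210827-0358-2}--\eqref{eqn-210827-0358-3} stay inside $\Delta_0$, which is where $\diam(\Delta_0)<R_0$ and the restriction to $\Delta_0/4$ are used.

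The only genuinely parabolic ingredient is the underlying dyadic/maximal-function machinery on the surface cube $\Delta_0\subset\p_l\cQ$ with the parabolic metric $|x-y|+|t-s|^{1/2}$; this metric is a space of homogeneous type (the surface measure $d\sigma$ is parabolically doubling because $\p\Omega$ is Lipschitz and hence Ahlfors regular, so $|\Delta_r|\approx r^{d+1}$), so the parabolic dyadic cubes, the Vitali covering lemma, the Hardy--Littlewood maximal inequality, and weak-type bounds all hold verbatim. Thus the proof is formally identical to the elliptic one once "cube" is read as "parabolic surface cube"; I expect the main obstacle to be purely bookkeeping---tracking the chain of dilation constants so that all the enlarged cubes invoked in \eqref{eqn-210827-0358-2} and \eqref{eqn-210827-0358-3} remain within $\Delta_0$ and the dependence of the final constant $C$ on $C_0,C_1,C_2,s,p,q,d,M$ is genuinely of the stated form---rather than any new analytic difficulty. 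I would therefore present the argument by citing \cite[Theorem~3.2]{Shen} (and \cite{CP}) for the structure and only spelling out the modifications needed to accommodate the $|f|^s$ on the right-hand side and the parabolic homogeneous-type setting.
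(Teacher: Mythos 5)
The paper gives no proof of this lemma; it states it as the parabolic analog of \cite[Theorem~3.2]{Shen} (in the spirit of \cite{CP}), modified as in \cite{TOB,OB13} to carry $|f|^s$ on the right-hand side, and then applies it directly. Your good-$\lambda$/Calder\'on--Zygmund stopping-time sketch is exactly the argument those references run, and your observation that the parabolic surface metric on $\p_l\cQ$ with $d\sigma$ is a doubling space of homogeneous type is precisely why the elliptic proof transplants. So the approach matches what the paper is tacitly invoking.

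One correction to the sketch, though. The good-$\lambda$ inequality you wrote, with the factor $\gamma^q$, is not what the stopping-time argument produces, and the claim that one is ``forcing the superlevel set to come entirely from $R_{\Delta_j}$'' would require an $L_\infty$ bound on $F_{\Delta_j}$, which is not assumed --- only an $L_1$ average is controlled by \eqref{eqn-210827-0358-3}. What the argument actually gives, for a fixed large dilation constant $A$ (chosen so that the enlarged cubes $16\Delta_j$ stay inside $\Delta_0$), is
\begin{equation*}
\bigl|\{X\in\Delta_0/4:\cM(F)>A\lambda,\ \cM(|f|^s)^{1/s}\le\gamma\lambda\}\bigr|\le C\bigl(\gamma + A^{-q}\bigr)\,\bigl|\{X\in\Delta_0/4:\cM(F)>\lambda\}\bigr|,
\end{equation*}
where the $A^{-q}$ comes from Chebyshev in $L_q$ applied to $R_{\Delta_j}$ (using that the reverse-H\"older right-hand side of \eqref{eqn-210827-0358-2} is $\lesssim\lambda$ by maximality of the stopping cube plus the $\gamma\lambda$ bound), and the $\gamma$ comes from Chebyshev in $L_1$ applied to $F_{\Delta_j}$. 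After multiplying by $p\lambda^{p-1}$ and integrating, the condition that must hold is $A^p(\gamma+A^{-q})<1$; it is the factor $A^{p-q}$, not a power of $\gamma$, that exploits $p<q$, and then $\gamma$ is taken small to finish the absorption. This is a bookkeeping fix, not a flaw in the strategy; the remaining ingredients you listed --- the threshold $\lambda>c\fint_{\Delta_0}|F|$ to launch the decomposition, the $L_p$-boundedness of $\cM(|f|^s)^{1/s}$ via $p>s$, and the dilation tracking --- are all sound.
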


Now let $s\in (1,q)$ and suppose that $u$ is the solution to \eqref{eqn-main} with $g_\cD=0$, $g_\cN\in L_p(\cN^T)\subset L_s(\cN^T)\subset H^1(\cN^T)$, and $\vec{N}(Du)\in L_1$ obtained in Theorem \ref{thm-210830-0419} (a).
We will prove that $\vec{N}(Du)\in L_p$, and \eqref{eqn-210901-0628} holds. For this, we fix any surface cube $\Delta_0\subset \p_l\cQ^T$ with $\diam(\Delta_0)<R_0$. In the following, we estimate $\|\vec{N}(Du)\|_{L_p(\Delta_0/4)}$ via Lemma \ref{lem-210827-0402}. Noting the embedding $L_s\hookrightarrow H^1$, for a surface cube $4\Delta\subset \Delta_0$,  let $w\in \cH^1_{2,loc}$ be the solution to
\begin{equation*}
	\begin{cases}
		w_t-\Delta w = 0  & \text{in }\, \cQ^T,\\
		\frac{\p w}{\p \vec{n}} = g_\cN 1_{4\Delta}  & \text{on }\, \cN^T,\\
		w = 0 & \text{on }\, \cD^T,\\
		w = 0 & \text{on }\, \{t=0\},\\
		\vec{N}(Dw) \in L_1(\p\cQ^T).
	\end{cases}
\end{equation*}
given by Theorem \ref{thm-210830-0419} (a), which satisfies
\begin{equation*}
	\frac{1}{|\Delta|}\|\vec{N}(Dw)\|_{L_1(\p_l\cQ^T)} \leq C(|g_\cN|^s)_{4\Delta}^{1/s}.
\end{equation*}
By the Hardy-Littlewood theorem applied to $(\vec{N}(Dw))^{1/2}$, we have
\begin{equation}\label{eqn-211017-1107}
	\fint_\Delta \big(\mathcal{M}(\vec{N}(Dw)^{1/2})\big)^2 \leq C(|g_\cN|^{s})_{4\Delta}^{1/s},
\end{equation}
where for $h\in L_{1,\text{loc}}(\p_l\cQ^T)$ and $X\in \partial_l\cQ$, we define
\begin{equation*}
	\cM(h)(X):= \sup_r\fint_{\Delta_r(X)} |h|1_{\p_l\cQ^T}\,d\sigma.
\end{equation*}
Now $v:=u-w$ satisfies
\begin{equation*}
	\begin{cases}
		v_t-\Delta v = 0  & \text{in }\, \cQ^T,\\
		\frac{\p v}{\p \vec{n}} = 0  & \text{on }\, \cN^T\cap 4\Delta,\\
		v = 0 & \text{on }\, \cD^T,\\
		\vec{N}(Dv) \in L_1(\p\cQ^T).
	\end{cases}
\end{equation*}
In Lemma \ref{lem-210827-0402}, we choose $f=g_\cN$ and
$$
F=\big(\mathcal{M}(\vec{N}(Du)^{1/2})\big)^2,\quad F_\Delta=\big(\mathcal{M}(\vec{N}(Dw)^{1/2})\big)^2,\quad R_\Delta=\big(\mathcal{M}(\vec{N}(Dv)^{1/2})\big)^2.
$$
Clearly, \eqref{eqn-210827-0358-1} holds. Also, \eqref{eqn-211017-1107} gives \eqref{eqn-210827-0358-3}. Hence, we are left to check \eqref{eqn-210827-0358-2}.

For this, we prove a reverse H\"older inequality for $\mathcal{M}\big(\vec{N}(Dv)^{1/2}\big)^2$.
\begin{proposition}\label{prop-210825-0426}
For any $q\in (1,(m+2)/(m+1))$, there exists $\theta=\theta(d,M,q)>0$ sufficiently small, such that if Assumption \ref{ass-210609-0500-3} ($\theta,m$) is satisfied, then
	\begin{equation*}
		\left(\fint_{\Delta} \big(\mathcal{M}(\vec{N}(Dv)^{1/2})\big)^{2q}\right)^{1/q} \leq C\fint_{8\Delta}\big(\mathcal{M}(\vec{N}(Dv)^{1/2})\big)^{2},
	\end{equation*}
where $C=C(d,M,q)$.
\end{proposition}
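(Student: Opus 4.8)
The plan is to follow the elliptic template of \cite{TOB,OB13,DL20}: the asserted bound is a reverse Hölder inequality for the function $G:=\big(\mathcal{M}(\vec N(Dv)^{1/2})\big)^2$, and I would obtain it by combining the local boundary estimates already in hand (Lemmas \ref{lem-210216-0333-1}, \ref{lem-210216-0333-2}, \ref{lem-210706-0144}) with the interior reverse Hölder inequality ``in the bulk'' (Lemma \ref{lem-210903-0918}) and the interior estimate for caloric functions, plus routine maximal‑function bookkeeping. Throughout set $h:=\vec N(Dv)^{1/2}$, so that $G=(\mathcal M h)^2$ and, by Lebesgue differentiation, $\vec N(Dv)=h^2\le(\mathcal M h)^2=G$ a.e.; this last inequality is what allows every right‑hand side produced below to be absorbed into $\fint_{8\Delta}G$.

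\emph{First reduction.} Fix $\Delta=\Delta_\rho(X_0)$. Using that $\mathcal M$ is sub‑additive and $\sqrt{a+b}\le\sqrt a+\sqrt b$, I would split the cone defining $\vec N(Dv)$ into the part of length $<\rho/2$ and the ``far'' part, so that $\mathcal M h\le \mathcal M\big((\vec N_{\rho/2}(Dv))^{1/2}1_{2\Delta}\big)+(\text{tail})$ on $\Delta$. Every point reached by the far part of a cone over a point of $4\Delta$ lies at interior distance $\gtrsim\rho$, so $v$ is caloric there at a definite scale; the interior estimate for caloric functions, the bulk reverse Hölder inequality (Lemma \ref{lem-210903-0918}(a) with $g_\cN=0$, which turns $(\fint_{\cQ_r}|Dv|^2)^{1/2}$ into $\fint_{\cQ_{2r}}|Dv|$), and the elementary conversion of a solid average $\fint_{\cQ_r}|Dv|$ into a surface average $\fint_{\Delta_{Cr}}\vec N(Dv)$ (Fubini along the cones, using that the foot of a point lies in its own non‑tangential cone) give that the tail term is $\le C\fint_{8\Delta}\vec N(Dv)\le C\fint_{8\Delta}G$, a constant over $\Delta$. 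The large‑scale part of $\mathcal M$ (averages over radii $\gtrsim\rho$) is likewise controlled by $C\big(\fint_{8\Delta}\vec N(Dv)\big)^{1/2}\le C\big(\fint_{8\Delta}G\big)^{1/2}$, after observing that such averages over different centers in $\Delta$ are mutually comparable and, for the truly global scales, invoking $\vec N(Dv)\in L_1(\p_l\cQ^T)$ — valid since $v=u-w$ and $\vec N(Du),\vec N(Dw)\in L_1$ by Theorem \ref{thm-210830-0419}(a), whence $G\in L_1$ and the ``floor'' of $\mathcal M h$ is harmless. With these reductions and the $L_{2q}$‑boundedness of $\mathcal M$ (legitimate as $2q>1$), the proposition is reduced to the truncated estimate: for every surface subcube $\Delta'=\Delta_s(Z)\subset 2\Delta$ with $s$ small compared to $\rho$,
\[
\Big(\fint_{\Delta'}\big(\vec N_{s/2}(Dv)\big)^{q}\,d\sigma\Big)^{1/q}\le C\fint_{\Delta_{Cs}(Z)}G\,d\sigma .
\]

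\emph{The truncated estimate and assembly.} On each $\Delta'$ there are three regimes. If $\dist(\Delta',\Lambda)\le A s$, apply Lemma \ref{lem-210706-0144} with $g_\cN=0$ and $p=q$; this is legitimate for every $q<(m+2)/(m+1)$ exactly because Assumption \ref{ass-210609-0500-3} $(\theta,m)$ holds with $\theta=\theta(d,M,q)$ small, and this is the one place the exponent restriction and the smallness of $\theta$ enter. If $\dist(\Delta',\Lambda)>A s$, then $\Delta_{2s}(Z)\subset\cN$ or $\Delta_{2s}(Z)\subset\cD$, and one uses Lemma \ref{lem-210216-0333-1} with $g_\cN=0$, respectively Lemma \ref{lem-210216-0333-2}, which apply with $p=q$ since $q<(m+2)/(m+1)\le 2<2+\epsi$. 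In all three cases one arrives at $\big(\fint_{\Delta'}(\vec N_{s/2}(Dv))^{q}\big)^{1/q}\le C\big(\fint_{\cQ_{2s}(Z)}|Dv|^2\big)^{1/2}$; the bulk reverse Hölder inequality (Lemma \ref{lem-210903-0918}(a), $g_\cN=0$, applied on $\cQ_{4s}(Z)\subset 4\Delta$, where $v$ solves the homogeneous problem) then gives $\le C\fint_{\cQ_{4s}(Z)}|Dv|$, and the solid‑to‑surface conversion gives $\le C\fint_{\Delta_{Cs}(Z)}\vec N(Dv)\le C\fint_{\Delta_{Cs}(Z)}G$, as desired. Finally, this family of cube‑by‑cube bounds is precisely the input of the standard reverse‑Hölder / Calderón–Zygmund mechanism (alternatively, iterate it over a Whitney decomposition adapted to $\Lambda$, as in the proof of Proposition \ref{prop-210831-0431}): a bounded‑overlap summation together with the Hardy–Littlewood maximal theorem (which applies cleanly since $G$ is already a maximal function) yields $\big(\fint_\Delta G^q\big)^{1/q}\le C\fint_{8\Delta}G$.

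The main obstacle I anticipate is not a single estimate but the combined bookkeeping of dilation constants: one must choose the aperture $\alpha$, the truncation scale, the Whitney constants, and the various dilates compatibly so that (i) every solid cube on which a PDE estimate is invoked stays inside $4\Delta$, where $v$ satisfies the homogeneous Neumann condition, and yet (ii) all the ``tail'' and solid‑to‑surface averages can be taken over a single fixed dilate of $\Delta$ (written $8\Delta$); the genuinely global nature of $\mathcal M$ at large scales is what forces the use of the a priori bound $\vec N(Dv)\in L_1$ from Theorem \ref{thm-210830-0419}(a) to tame the ``floor'' term. The conceptually delicate point — already isolated in Lemma \ref{lem-210706-0144}, which itself rests on Lemma \ref{lem-210903-0918}(b) under Assumption \ref{ass-210609-0500-3} — is that $q$ must be admissible in the near‑$\Lambda$ reverse Hölder inequality, and it is exactly this that pins down the range $q<(m+2)/(m+1)$ and the need for a small $\theta$.
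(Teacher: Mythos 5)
Your overall architecture (split the cone into a near part and a far part, run the near part through the local boundary lemmas plus the bulk reverse H\"older inequality, and absorb the far part into the right-hand side by exploiting properties of the maximal function) is the same as the paper's, and you correctly identify that the restriction $q<(m+2)/(m+1)$ and the smallness of $\theta$ enter only through Lemma~\ref{lem-210706-0144} and hence Lemma~\ref{lem-210903-0918}(b). But there is a genuine gap in the way you handle the far part of the cone.

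You assert that the tail term is bounded by $C\fint_{8\Delta}\vec N(Dv)$, reasoning that every point reached by the far part of a cone over a point of $4\Delta$ lies at interior distance $\gtrsim\rho$ and that a solid average $\fint_{\cQ_\tau}|Dv|$ can be converted into the surface average $\fint_{\Delta_{C\tau}}\vec N(Dv)$. The conversion itself is true, but the surface cube it produces is centered at the \emph{foot} of the interior point $Y$, which for $Y$ in the far cone can be anywhere on $\partial_l\cQ^T$ and in particular far outside $8\Delta$ (the cone $\Gamma^{\rho/2}(X)$ reaches out to scale $\diam(\Omega)$, not scale $\rho$). So the bound $\vec N^{\rho/2}(Dv)\big|_{4\Delta}\lesssim\fint_{8\Delta}\vec N(Dv)$ is simply false: if $\vec N(Dv)$ concentrates on the opposite side of $\partial_l\cQ^T$, the far cone still sees it while $\fint_{8\Delta}\vec N(Dv)$ does not. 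The paper does not make this reduction. Instead, the key geometric computation (the chain of inclusions in \eqref{eq6.00}) shows that for $X\in 2\Delta$, $Y\in\Gamma^{r/2}(X)$ and \emph{any} $Z\in 8\Delta$, $|Dv(Y)|^{1/2}$ is controlled by an average of $\vec N(Dv)^{1/2}$ over a surface cube centered at $Z$ of radius $\gtrsim r$; taking supremums this gives
\begin{equation*}
\sup_{2\Delta}\vec N^{r/2}(Dv)\ \le\ C\fint_{8\Delta}\bigl(\mathcal M_{r,\infty}(\vec N(Dv)^{1/2})\bigr)^2\ \le\ C\fint_{8\Delta}\bigl(\mathcal M(\vec N(Dv)^{1/2})\bigr)^2,
\end{equation*}
which, combined with the near-$\Lambda$ comparability $\mathcal M_{r,\infty}(f)(X)\approx\mathcal M_{r,\infty}(f)(Y)$ for $|X-Y|<Ar$, closes the argument. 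Your invocation of the global $L_1$ bound on $\vec N(Dv)$ to ``tame the floor'' does not by itself produce a bound of the right form (the uniform floor one obtains from $\|\vec N(Dv)\|_{L_1}$ is off from $\fint_{8\Delta}(\mathcal M h)^2$ by powers of $\diam(\Omega)/\rho$), and the enlargement-of-cubes inequality above is what is actually needed.

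A secondary remark: you then propose to establish the near-cone piece by proving a family of cube-by-cube estimates at \emph{all} small sub-scales $s\ll\rho$ and feeding them into a Calder\'on--Zygmund iteration. That machinery is not needed here and it is not clear it closes. In the paper, once the maximal function has been split into $\mathcal M_{r,0}$ and $\mathcal M_{r,\infty}$, the near part is handled by a \emph{single} application of the local lemma at scale $r$ together with the Hardy--Littlewood maximal theorem in $L_q$ (applied to $\bigl(1_{2\Delta}\vec N_{r/2}(Dv)\bigr)^{1/2}$); see \eqref{eqn-210827-0407} and \eqref{eqn-210827-0241-1}. There is no iteration over a Whitney family within the proof of this proposition.
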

In the proof, we also need the truncated Hardy-Littlewood maximal function at the boundary: For $r=\diam(\Delta)$ and $h\in L_{1,\text{loc}}(\p_l\cQ^T)$, let
\begin{equation*}
	\mathcal{M}_{r,0}(h)(X):= \sup_{0<\tau<r} \fint_{\Delta_\tau(X)} |h|1_{\p_l\cQ^T},\quad \mathcal{M}_{r,\infty}(h)(X):= \sup_{\tau>r} \fint_{\Delta_\tau(X)} |h|1_{\p_l\cQ^T}.
\end{equation*}
\begin{proof}[Proof of Proposition \ref{prop-210825-0426}]
	 Recall the truncated non-tangential maximal function $\vec{N}_{r/2}(f)$ and $\vec{N}^{r/2}(f)$ defined in \eqref{eqn-210918-0600}. We estimate them separately.
	
	\textbf{Estimate of $\vec{N}_{r/2}(Dv)$.} By Lemma \ref{lem-210216-0333-1} (or, Lemma \ref{lem-210216-0333-2} for the Dirichlet case, Lemma \ref{lem-210706-0144} for the mixed case) and \eqref{eq6.54}, we have
	\begin{equation}\label{eqn-210827-0407}
		\big(\fint_{2\Delta}\vec{N}_{r/2}(Dv)^{q}\big)^{1/q} \leq C \big(\fint_{\cQ_{4r}} |Dv|^2\big)^{1/2} \leq C\fint_{\cQ_{8r}} |Dv| \leq C\fint_{8\Delta} \vec{N}(Dv).
	\end{equation}
Since the $r$-neighborhood of $\Delta$ is contained in $2\Delta$, by the Hardy-Littlewood maximal function theorem,
\begin{equation}\label{eqn-210827-0241-1}
	\begin{split}
		\left(\fint_{\Delta}\mathcal{M}_{r,0}\big((\vec{N}_{r/2}(Dv))^{1/2}\big)^{2q}\right)^{1/q}		&=\left(\fint_{\Delta}\mathcal{M}_{r,0}\big((1_{2\Delta}\vec{N}_{r/2}(Dv))^{1/2}\big)^{2q}\right)^{1/q} \\
		&\leq C\big(\fint_{2\Delta}\vec{N}_{r/2}(Dv)^{q}\big)^{1/q}\\	
		&\leq C\fint_{8\Delta} \vec{N}(Dv) \leq C\fint_{8\Delta} \mathcal{M}\big(\vec{N}(Dv)^{1/2}\big)^2.
	\end{split}
\end{equation}

\textbf{Estimate of $\vec{N}^{r/2}(Dv)$.}
For any $X\in 2\Delta$ and $Y\in \Gamma^{r/2}(X)$, set
\begin{equation*}
	\gamma(Y):=\{Z\in\p_l\cQ^T:Y\in \Gamma(Z)\}.
\end{equation*}
We aim to derive a lower bound for $|\gamma(Y)|$. Let $\widetilde{Y}\in\p_l\cQ^T$ be the point such that
$|Y-\widetilde{Y}| = d(Y)$.
By the triangle inequality,
\begin{equation*}
	d(Y) \geq |Y-Z|/(1+\alpha)\quad\forall Z\in \Delta_{\alpha d(Y)}(\widetilde{Y}),
\end{equation*}
which means
\begin{equation}\label{eqn-210826-0650}
\Delta_{\alpha d(Y)}(\widetilde{Y})\subset \gamma(Y).
\end{equation}
Again by the triangle inequality, we have $|X-\widetilde{Y}|\le |X-Y|+|Y-\widetilde{Y}|\le (2+\alpha)d(Y)$, and thus
\begin{equation}\label{eqn-210918-0549}
	\Delta_{\alpha d(Y)}(\widetilde{Y})\subset \Delta_{(2+2\alpha)d(Y)}(X).
\end{equation}
Using the inclusions \eqref{eqn-210826-0650} and \eqref{eqn-210918-0549}, we have for any $Z\in\p_l\cQ$,
\begin{equation}
                    \label{eq6.00}
	\begin{split}
		&|Dv(Y)|^{1/2} \leq \fint_{\Delta_{\alpha d(Y)}(\widetilde{Y})} (\vec{N}(Dv))^{1/2}\\
		&\leq
		\frac{|\Delta_{(2+2\alpha)d(Y)}(X)|}{|\Delta_{\alpha d(Y)}(\widetilde{Y})|}\fint_{\Delta_{(2+2\alpha)d(Y)}(X)} (\vec{N}(Dv))^{1/2}
		\\&\leq
		C_{\alpha,d,M} \fint_{\Delta_{(2+2\alpha)d(Y)}(X)} (\vec{N}(Dv))^{1/2}
		\\&\leq
		C_{\alpha,d,M} \frac{|\Delta_{(2+2\alpha)d(Y) +|X-Z|}(Z)|}{|\Delta_{(2+2\alpha)d(Y)}(X)|} \fint_{\Delta_{(2+2\alpha)d(Y)+|X-Z|}(Z)} (\vec{N}(Dv))^{1/2}.
	\end{split}	
\end{equation}
Furthermore, noting $Y\in\Gamma(X)$ and $|X-Y|\geq r/2$, we have
\begin{equation*}
	d(Y)=|Y-\widetilde{Y}| \geq |X-Y|/(1+\alpha) \geq r/(2+2\alpha).\end{equation*}
Taking $Z\in 8\Delta$ and the sup with respect to $Y\in \Gamma^{r/2}(X)$ in \eqref{eq6.00}, we get
\begin{equation*}
	\big(\vec{N}^{r/2}(Dv)(X)\big)^{1/2}\leq C\mathcal{M}_{r,\infty}((\vec{N}(Dv))^{1/2})(Z).
\end{equation*}
Taking sup on the left-hand side and average on the right-hand side, we have
\begin{equation}\label{eqn-210827-0241-2}
	\sup_\Delta\big(\mathcal{M}_{r,0}(\vec{N}^{r/2}(Dv))^{1/2}\big)^2
	\leq
	C\sup_{2\Delta}\vec{N}^{r/2}(Dv)
	\leq
	C\fint_{8\Delta}\big(\mathcal{M}_{r,\infty}((\vec{N}(Dv))^{1/2})\big)^2.
\end{equation}

\textbf{Estimate of $\mathcal{M}_{r,\infty}(\vec{N}(Dv)^{1/2})$.} This is the easy part. By the same reasoning with \cite[Lemma~A.4]{OB21}, for any function $f$, a constant $A>0$, and two points $X,Y$ with $|X-Y|<Ar$,
\begin{equation*}
	\mathcal{M}_{r,\infty}(f)(X)\approx_A \mathcal{M}_{r,\infty}(f)(Y).
\end{equation*}
Hence,
\begin{equation}\label{eqn-210827-0241-3}
	\sup_\Delta \mathcal{M}_{r,\infty}(\vec{N}(Dv)^{1/2}) \leq C \fint_{8\Delta}\mathcal{M}_{r,\infty}(\vec{N}(Dv)^{1/2}).
\end{equation}
Combining \eqref{eqn-210827-0241-1}, \eqref{eqn-210827-0241-2}, and \eqref{eqn-210827-0241-3}, the proposition is proved.
\end{proof}
It is not difficult to see that \eqref{eqn-210827-0358-2} follows from Proposition \ref{prop-210825-0426}, the definitions of $F$, $F_\Delta$, and $R_\Delta$, and \eqref{eqn-210827-0358-3}. Then by Lemma \ref{lem-210827-0402}, we have $\big(\mathcal{M}(\vec{N}(Du)^{1/2})\big)^2\in L_p(\Delta_0/4)$. Taking the extension at the beginning of Section \ref{sec-210713-0656} into consideration, by a covering argument, one can simply see that $\vec{N}(Du)\in L_p(\p_l\cQ^T)$ and the estimate \eqref{eqn-210901-0628} holds.

\begin{remark}
	In our previous paper \cite{DL20} regarding the elliptic case, there was a gap in the geometric argument when estimating $\vec{N}^{r/2}(Dv)$ on Page 22. A similar issue in \cite{TOB} and \cite{OB13} was pointed out and resolved in \cite{OB21}. More precisely, in \cite{DL20} we used for any surface cube $Q$ and $x\in Q$, a pointwise bound $\vec{N}^{r_Q/2}(Dv)(x) \leq (\vec{N}(Dv))_{2Q}$, which is not correct. Such problem can be fixed by using the Hardy-Littlewood maximal function on the boundary, as described in the proof above.
\end{remark}

\subsection{\texorpdfstring{$L_{1+\varepsilon}$}{L} estimate without flatness assumption}
The proof is almost the same with that of Theorem \ref{thm-211009-0913} in Section \ref{sec-210901-0632}. The only difference is that since we do not have the flatness of $\Lambda$, here we can only take $q\in(1,p_0/2)$ in \eqref{eqn-210827-0407}. Eventually we reach $\vec{N}(Du)\in L_{1+\epsi}$ and the corresponding estimate, which is Theorem \ref{thm-210830-0419} (c).

\appendix
\section{A geometric lemma}\label{app-210919-1049}
\begin{lemma}\label{lem-210629-1247}
	For any $\epsi_0>0$, we can find $\theta=\theta(d,m,M,\epsi_0)$ small enough, such that if Assumption \ref{ass-210609-0500-3} (b) with parameters ($\theta,m$) is satisfied, then Assumption \ref{ass-210730-0540} ($\epsi_0$) holds.
\end{lemma}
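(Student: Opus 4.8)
The goal is to verify the two integral asymptotics in Assumption \ref{ass-210730-0540} ($\epsi_0$), namely $\int_{Q_r(X)\cap\p_l\cQ}\delta^s\,d\sigma\approx r^{d+1+s}$ for $s>-1+\epsi_0$ and the analogous bulk estimate $\int_{Q_r(X)\cap\cQ}\delta^{s-1}\,dZ\approx r^{d+1+s}$, given that near every boundary point $\p\Omega$ is $\theta$-flat and $\Lambda$ is trapped in a $\theta R$-neighborhood of a graph $\phi$ of $m$ variables. The plan is to work entirely in the coordinate system associated with $(X_0,R)$ from Assumption \ref{ass-210609-0500-3}(b), reduce to a model computation on a flat half-space, and then estimate the error introduced by the $\theta R$-fattening.

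First I would fix $X=(t_0,x_0)\in\Lambda$ and $r<R_0$, take $R=2r$ (say), and pass to the associated coordinates. On $\p_l\cQ\cap Q_R(X_0)$ the conditions give two "slabs'': the set where $y^2>\phi(y^3,\dots,y^{m+2})+\theta R$ lies in $\cD$, the set where $y^2<\phi-\theta R$ lies in $\cN$, and $\Lambda$ is squeezed into the intermediate slab $|y^2-\phi(y^3,\dots,y^{m+2})|\le\theta R$. Since $\delta(Z)=\dist(Z,\Lambda)$, on the lateral boundary I can compare $\delta(Z)$ with the "parabolic graph distance'' $\rho(Z):=|y^2-\phi(y^3,\dots,y^{m+2})|$ up to an additive $C\theta R$ and a multiplicative constant depending only on $M$ (the Lipschitz constant of $\phi$) and $d$: one inclusion is immediate because the graph point realizing $\rho$ is within $C\theta R$ of $\Lambda$; the reverse uses that any point of $\Lambda$ has $y^2$-coordinate within $\theta R$ of $\phi$ at the appropriate base, so $\delta(Z)\ge \rho(Z)-C\theta R$. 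Thus for $\delta(Z)\ge h:=C_1\theta R$ we have $\delta(Z)\approx\rho(Z)$, and the region $\{\delta<h\}$ is a $\approx\theta R$-neighborhood of a Lipschitz graph of codimension one inside $\p_l\cQ$, hence of surface measure $\lesssim\theta R\cdot R^{d}=\theta R^{d+1}$ (parabolically, $R^{d+1}$ is the natural scale of $Q_R\cap\p_l\cQ$).

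The model computation is then routine: with $\rho$ playing the role of distance to a codimension-one Lipschitz set inside a $d+1$-dimensional parabolic boundary, $\int_{Q_r(X)\cap\p_l\cQ}\rho^s\,d\sigma\approx\int_0^{r}\tau^s\,\tau^{?}$-type slicing gives $\approx r^{d+1+s}$ whenever $s>-1$ (the integral converges), and similarly $\int_{Q_r\cap\cQ}\delta^{s-1}\,dZ\approx r^{d+1+s}$ for $s-1>-2$, i.e. $s>-1$. The main obstacle — and the only place $\theta$ must be chosen small and $\epsi_0$ enters — is controlling the discrepancy on $\{\delta<h\}$, where $\delta^s$ is not comparable to $\rho^s$: one splits $\int_{Q_r\cap\p_l\cQ}\delta^s = \int_{\delta\ge h}+\int_{\delta<h}$; the first piece is handled by the model computation and equals $r^{d+1+s}(1+O(\theta))$, while the second is bounded, for $s\ge 0$, by $h^s\cdot|\{\delta<h\}\cap Q_r|\lesssim (\theta R)^s\theta R^{d+1}\lesssim \theta^{1+s}r^{d+1+s}$, which is lower order. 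For $-1+\epsi_0<s<0$ the weight $\delta^s$ blows up near $\Lambda$, so one instead uses the lower bound $\delta\gtrsim$ (distance to the $\theta R$-slab) on a Whitney-type decomposition of $\{\delta<h\}$ and sums a geometric series; the convergence of that sum is exactly where $s>-1$ is needed, and uniformity as $\theta\to0$ is where having $s$ bounded away from $-1$ by $\epsi_0$ makes the implied constants independent of the point and of $r$. I would then choose $\theta=\theta(d,m,M,\epsi_0)$ so that all the $O(\theta)$ and $O(\theta^{1+s})$ corrections are at most, say, $1/2$ of the main term, giving the two-sided bound $\approx r^{d+1+s}$; the bulk estimate \eqref{eqn-210622-0316-2} is obtained by the same splitting with a co-area/Fubini slicing of $Q_r\cap\cQ$ into level sets of $\delta$, each of which is, up to the $\theta R$ error, a translate of the codimension-one picture already analyzed.
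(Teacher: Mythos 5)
Your handling of the lower bound and of the case $s\ge 0$ is sound and is essentially what the paper does (compare $\delta$ with $|y^2-\phi|$ up to an $O(\theta r)$ error and integrate directly; for the lower bound restrict to the region $|y^2-\phi|\ge r/2$ where $\delta\approx r$). The gap is in the case $-1+\epsi_0<s<0$, which is the only place where smallness of $\theta$ is actually needed. You split $\int_{\{\delta\ge h\}}+\int_{\{\delta<h\}}$ with $h\approx\theta r$ and on the second piece propose to use ``the lower bound $\delta\gtrsim$ distance to the $\theta R$-slab'' with a Whitney decomposition and a geometric series. That lower bound is vacuous on $\{\delta<h\}$: this set lies inside (or within $O(\theta r)$ of) the slab, so the distance to the slab is zero there and gives $\delta\ge 0$, nothing more. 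More seriously, knowing only that $\Lambda$ is confined to a $\theta r$-slab at the single scale $r$ gives no control at all on $\int_{\{\delta<h\}}\delta^s$ for $s<0$: with that information alone, $\Lambda$ could fill the slab like a set of high Hausdorff dimension and the integral would diverge.

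The missing step, and the heart of the paper's proof, is to iterate Assumption~\ref{ass-210609-0500-3}(b), which holds at \emph{every} scale $R\in(0,R_0]$, not just at scale $r$. The paper covers $\Lambda\cap\Delta_r$ by at most $C_0/\theta^d$ surface cubes of radius $2\theta r$ centered on $\Lambda$, applies the flatness again inside each such cube, and iterates to cover $\Lambda\cap\Delta_r$ at scale $(2\theta)^k r$ by at most $(C_0/\theta^d)^k$ cubes. The level set $\Delta_r^{(k+1)}:=\{Y\in\Delta_r:(2\theta)^{k+1}r\le\delta(Y)<(2\theta)^kr\}$ then lies in the union of the doubled cubes at scale $(2\theta)^kr$, so its contribution to the integral is at most $(C_0/\theta^d)^k\bigl(2(2\theta)^kr\bigr)^{d+1}\bigl((2\theta)^{k+1}r\bigr)^s$. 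Summing over $k$ produces a geometric series with ratio $2^{d+1+s}C_0\theta^{1+s}$, and the smallness of $\theta$ enters precisely here: one requires $2^{d+1}C_0\theta^{\epsi_0}<1$, using $1+s>\epsi_0$ to bound the ratio uniformly in $s$. Your framing, ``choose $\theta$ so the $O(\theta)$ corrections are at most half of the main term,'' mischaracterizes the role of $\theta$: the constraint is not a perturbation estimate against a model integral but the convergence of a geometric series coming from the multi-scale Reifenberg structure of $\Lambda$, and this is the piece your outline does not supply.
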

\begin{proof}
	In the following, we only prove \eqref{eqn-210622-0316-1}. The proof for \eqref{eqn-210622-0316-2} is similar. Without loss of generality, we take the center $X=0$.
	
	The lower bound is straightforward: as long as $\theta\leq 1/2$ in Assumption \ref{ass-210609-0500-3} ($\theta,m$),
	\begin{equation*}
		\int_{\Delta_r}\delta^s\,d\sigma \geq \int_{\Delta_r, |y^2-\phi|\geq r/2}\delta^s\,d\sigma \approx \int_{\Delta_r, |y^2-\phi|\geq r/2}r^s\,d\sigma \approx r^{s+d+1}.
	\end{equation*}
	This proves the lower bound in \eqref{eqn-210622-0316-1}.
	
	Now we prove the upper bound. When $s\geq 0$, by Assumption \ref{ass-210609-0500-3},
$$
\delta(Y) \leq
C|y^2-\phi(y^3,\ldots,y^{m+2})-\theta r|+C|y^2-\phi(y^3,\ldots,y^{m+2})+\theta r|,
$$
where $C=C(M)$.
Now \eqref{eqn-210622-0316-1} follows as
	\begin{equation*}
		\int_{\Delta_r}\delta^s\,d\sigma \leq C\int_{\Delta_r}(
|y^2-\phi(y^3,\ldots,y^{m+2})-\theta r|^s+|y^2-\phi(y^3,\ldots,y^{m+2})+\theta r|^s
) \,d\sigma \approx r^{d+1+s}.
	\end{equation*}
	Again, in this case no smallness of $\theta$ is needed.
	
	We are left with the case $s<0$.
	First, take a decomposition $\Delta_r=\bigcup_{k=1}^\infty \Delta_r^{(k)}$, where
		\begin{equation}\label{eqn-210929-0801}
			\Delta_r^{(k)}:=\{Y\in\Delta_r,\, \delta(Y) \in [(2\theta)^{k} r, (2\theta)^{k-1} r)\}.
	\end{equation}
	Since $\Lambda$ is $(\theta,m)$-flat, for any $r<R_0$, we see that $\Delta_r(X)\cap\Lambda$ can be covered by $C_0/\theta^d$ surface cubes with centers at $\Lambda$ and radii $2\theta r$, where $C_0$ depends only on $d$ and $M$. As long as $\theta< 1/2$, we can iterate this to find for each $k\geq 1$ a covering by $t_k$ surface cubes  with centers at $\Lambda$, radii $(2\theta)^k r$, and $t_k\leq (C_0/\theta^d)^k$, i.e.,
	\begin{equation*}
		\Delta_r\cap\Lambda \subset \bigcup_{i=1}^{t_k} \Delta_{(2\theta)^k r}(X_i^{(k)}),\quad X_i^{(k)}\in\Lambda.
	\end{equation*}
	By the triangle inequality,
	\begin{equation*}
		\Delta_r^{(k+1)}\subset \{Y\in\Delta_r,\, \delta(Y) < (2\theta)^k r\}\subset \bigcup_{i=1}^{t_k} \Delta_{2(2\theta)^k r}(X_i^{(k)}).
	\end{equation*}
Hence, noting $s<0$ and \eqref{eqn-210929-0801},
	\begin{align}
		\int_{\Delta_r}\delta^s &\leq \sum_{k=1}^{\infty} \int_{\Delta_r^{(k)}}\delta^s
\leq
		C(2\theta)^sr^{d+1+s} + \sum_{k=1}^{\infty}\sum_{i=1}^{t_k}
\int_{\Delta_{2(2\theta)^k r}(X_i^{(k)})\cap \Delta_r^{(k+1)}}\delta^s \nonumber
		\\&\leq
			C(2\theta)^sr^{d+1+s} + \sum_{k=1}^{\infty}\sum_{i=1}^{t_k}	
		|\Delta_{2(2\theta)^k r}(X_i^{(k)})|((2\theta)^{k+1} r)^s \notag
			\\&\leq
			C(2\theta)^sr^{d+1+s} + \sum_{k=1}^{\infty} (C_0/\theta^d)^k C\big(2(2\theta)^k r\big)^{d+1} \big((2\theta)^{k+1}r\big)^s	\label{eqn-210929-0806}	
			\\&\leq
			C(2\theta)^sr^{d+1+s} + C(2r)^{d+1+s}\theta^s \sum_{k=1}^\infty (2^{d+1+s}C_0\theta^{1+s})^k. \nonumber		
	\end{align}
Here in \eqref{eqn-210929-0806}, we used $t_k\leq (C_0/\theta^d)^k$.
	We are left to choose $\theta$ small enough such that
	\begin{equation*}
		2^{d+1+s}C_0\theta^{1+s} \leq 2^{d+1}C_0\theta^{\epsi_0}<1.
	\end{equation*}
The lemma is proved.
\end{proof}

\section{Proof of Lemma \ref{lem-211105-0444}}\label{app-210824-1212}
Now we give the construction of the cut-off function in Section \ref{sec-210817-0523}. First, for the closed set $\overline{\cD^S}\subset \mathbb{R}^{d+1}$, there exists a (parabolic) regularized distance $\rho_{\cD^S}\in C^{0,1}(\bR^{d+1})\cap C^\infty(\bR^{d+1}\setminus\overline{\cD^S})$, with
\begin{align}
	C^{-1}\leq \frac{\rho_{\cD^S}(X)}{\dist(X,\cD^S)} \leq C\quad\forall X\in \mathbb{R}^{d+1}\setminus\overline{\cD^S},\label{eqn-210823-1148-1}
	\\
	|\p^{\vec{\alpha}}_x \rho_{\cD^S}(X)| \leq C_{\vec{\alpha}}\rho_{\cD^S}(X)^{1-|\alpha|}\quad \forall X\in \mathbb{R}^{d+1}\setminus\overline{\cD^S},\,\,\vec{\alpha}=(\alpha_1,\ldots\alpha_d)\in \mathbb{N}^d,\label{eqn-210823-1148-2}
	\\
	|\p^k_t \rho_{\cD^S}(X)| \leq C_k\rho_{\cD^S}(X)^{1-2k}\quad \forall X\in \mathbb{R}^{d+1}\setminus\overline{\cD^S},\,\,k\in\mathbb{N}.\label{eqn-210823-1148-3}
\end{align}
Such $\rho_{\cD^S}$ can be constructed by decomposing $\bR^{d+1}\setminus\overline{\cD^S}$ into Whitney cubes as in \cite[Theorem~VI.2.2]{MR0290095}. Similarly, we can construct $\rho_{\cN^S}$, the regularized distance to $\overline{\cN^S}$. Now, for a usual cut-off function $\varphi$ on $\bR$ with
\begin{equation*}
	\varphi|_{(-\infty,1)} = 1,\quad\varphi|_{(2,\infty)}=0,\quad\text{and}\,\,|D\varphi|\leq C,
\end{equation*}
we define
\begin{equation*}
	\eta = 1 - \varphi\big(\frac{\rho_{\cD^S}}{\rho_{\cN^S}}\big)\varphi\big(\frac{\rho_{\cD^S}}{h}\big).
\end{equation*}
By using \eqref{eqn-210823-1148-1}, \eqref{eqn-210823-1148-2}, and \eqref{eqn-210823-1148-3}, it is not difficult to check that such $\eta$ satisfies the desired properties.



\def\cprime{$'$}

\end{document}